%%%%%%%%%%%%%%%%%%
%
%  A reboot of the Miller-Rabin project
%  Andrew Shallue
%  June 2012 at Illinois Wesleyan
%

\documentclass[11pt]{article}
\usepackage{geometry}                % See geometry.pdf to learn the layout options. There are lots.
\geometry{letterpaper}                   % ... or a4paper or a5paper or ... 
\usepackage{graphicx}
\usepackage{amssymb, amsthm,amsmath}
\usepackage{epstopdf}
\DeclareGraphicsRule{.tif}{png}{.png}{`convert #1 `dirname #1`/`basename #1 .tif`.png}

% The automatic tex that Archiv uses has problems with amsart and algorithm2e
% so use the following if compiling on your own
%\usepackage[vlined,ruled,linesnumbered]{algorithm2e}

% and use the following if submitting to Archiv
\usepackage[vlined, ruled, linesnumbered, norelsize]{algorithm2e}

\newtheorem{theorem}{Theorem}[section]
\newtheorem*{theorem*}{Theorem}
\newtheorem{lemma}[theorem]{Lemma}
\newtheorem{proposition}[theorem]{Proposition}
\newtheorem{corollary}[theorem]{Corollary}
\newtheorem{conjecture}[theorem]{Conjecture}

\theoremstyle{definition}
\newtheorem{definition}[theorem]{Definition}

\renewcommand{\P}{ \mathcal{P}}
\newcommand{\N}{ \mathbb{N}}

\newcommand{\ord}{ {\rm ord}}

% I want footnotes to be arabic numerals

% perhaps the title should be 
% Counting numbers that are strong liars to exactly 2 bases
\title{Counting Composites with  Two Strong Liars}
\author{Eric Bach \thanks{Research supported by NSF: CCF-0635355 and 
ARO: W911NF9010439} \\ 
University of Wisconsin--Madison \\
1210 W. Dayton St. \\ 
Madison, WI 53706 \\
bach@cs.wisc.edu
 \and Andrew Shallue  \\
 Illinois Wesleyan University \\
 1312 Park St. \\
 Bloomington, IL  61701 \\
 ashallue@iwu.edu
 }
\date{}                                           % Activate to display a given date or no date

\begin{document}
\maketitle

\begin{abstract}
The strong probable primality test is an important practical tool for discovering 
prime numbers.  Its effectiveness derives from the following fact:  for any odd
composite number $n$, if a base $a$ is chosen at random, the algorithm
is unlikely to claim that $n$ is prime.  If this does happen we call $a$ a liar.
In 1986, Erd\H{o}s and Pomerance computed the normal and average number of liars, 
over all $n \leq x$.  We continue this theme and use a variety of techniques to count 
$n \leq x$ with exactly two strong liars, those being the $n$ for which the strong test 
is maximally effective.  We evaluate this count asymptotically and give an improved 
algorithm to determine it exactly.  We also provide asymptotic counts for the restricted 
case in which $n$ has two prime factors, and for the $n$ with exactly two Euler liars.
\end{abstract}

% first section
\section{Introduction}

The strong probable primality test (studied by Selfridge, Miller, Rabin, and others)
is an important tool for discovering prime numbers 
in practice.  Its success relies on the scarcity of strong liars.

% definition of strong liar
\begin{definition}\label{def:strongliar}
Let $n$ be an odd composite integer.  Write $n-1$ as $2^k \cdot n'$ where $n'$ 
is the odd part of $n-1$ and $k = {\rm ord}_2(n)$.  
Then $a$ is a strong liar with respect to $n$
if either 
\begin{enumerate} \item $a^{2^i n'} \equiv -1 \bmod{n}$ for 
some $0 \leq i < k$ or
\item $a^{n'} \equiv 1 \bmod{n}$.
\end{enumerate}
\end{definition}

Throughout we will use this convention of writing $n-1$ as $2^k \cdot n'$ where $n'$ is odd.
We use $\log$ for the natural logarithm, and $\varphi(n)$ for the count of 
$1 \leq a \leq n$ with $\gcd(a,n)=1$.  When using asymptotic notation, implied constants
with subscripts depend on that variable.  We use the Euler constant $\gamma$
defined by
$$
\gamma = \int_1^{\infty}  \frac{1}{\lfloor x \rfloor} - \frac{1}{x} \ {\rm d}x 
\approx 0.5772 \enspace .
$$

If $n$ is an odd prime then the condition in Definition \ref{def:strongliar} holds 
for all $a$ not divisible by $n$.  If $n$ is even then $n' = n-1$ is odd and the only  
strong liars are $a$ such that  $a^{n-1} \equiv \pm 1 \mod{n}$.  
While it is possible to have strong liars 
in this case, we restrict to $n$ odd since it is more interesting for primality testing.
Nevertheless, for convenience we define the set 
$$
S(n) = \{ a \mod{n} \ : \ a^{n'} \equiv 1 \bmod{n} \mbox{ or } a^{2^i n'} \equiv -1 \bmod{n}
\mbox{ for some $0 \leq i < k$} \} 
$$
for general $n$.
As a shorthand we will refer to elements of $S(n)$ as strong liars, even though
if $n$ prime then $a \in S(n)$ is truthfully giving us evidence that $n$ is prime.
Though they are not our main focus, it is useful to define two other types of liars.

\begin{definition}
Let $n$ be an odd composite integer and $(a \mid n)$ be the Jacobi symbol.
Then $a$ is a Fermat liar with respect to $n$ if $a^{n-1} \equiv 1 \mod{n}$
and $a$ is an Euler liar with respect to $n$ if 
$\gcd(a,n)=1$ and $a^{(n-1)/2} \equiv (a \mid n) \mod{n}$.
\end{definition}

We similarly define 
\begin{align*} 
F(n) &= \{ a \mod{n} \ : \ a^{n-1} \equiv 1 \mod{n} \} \\
E(n) &= \{ a \mod{n} \ : \  \gcd(a,n)=1 \mbox{ and } a^{(n-1)/2} \equiv ( a \mid n) \mod{n} \} \enspace ,
\end{align*}
and an important fact is that $S(n) \subseteq E(n) \subseteq F(n)$.  While $E(n)$ and $F(n)$ 
are always subgroups of the group of units modulo $n$, $S(n)$ may not be.

Our primary interest will be in counting $n$ where $|S(n)|$ is an extremal value.  On the practical 
side, it is useful to know how often we might expect the strong primality test to be as 
effective as possible or as ineffective as possible.  
In Section \ref{sec:facts} we discuss what is known about the worst case, but our new 
contribution involves counting best case composites.  This occurs when $n > 3$ has two strong liars, 
and the happy consequence is that  one trial of the strong primality test
 is sufficient to prove compositeness.

For theoretical motivation, we will see that 
a key quantity is 
$$
\prod_{p \mid n} \gcd(p',n')
$$
which is interesting in its own right.  Finally, our work is complimentary to that of 
Erd\H{o}s and Pomerance in \cite{ErdPom86}, who provide upper and lower bounds on 
the arithmetic and geometric mean of all three sets $S(n)$, $E(n)$ and $F(n)$.
They also provide some discussion of counts of $n$ with extremal values of $F(n)$, 
one result of which we extend to $S(n)$.

In addressing these questions, we prove results using both analytic and algorithmic 
techniques.  Our main result is the following.

\begin{theorem*} \label{thm:minliars}
The number of odd $n \leq x$ with exactly two strong liars is given by 
$$
(1 + o(1)) \frac{x e^{-\gamma}}{\log\log\log{x}}
$$
where $\gamma$ is Euler's constant.
\end{theorem*}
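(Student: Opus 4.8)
The plan is to convert $|S(n)| = 2$ into a purely multiplicative condition on $n$ and then carry out an Erd\H{o}s-type counting argument. By the exact formula for $|S(n)|$ recorded in Section~\ref{sec:facts} (Monier--Rabin): if $n = p_1^{e_1}\cdots p_\omega^{e_\omega}$ is odd, and one writes $p_i - 1 = 2^{k_i}m_i$ with $m_i$ odd and sets $\nu = \min_i k_i$, then
\[
|S(n)| = \Bigl(1 + \tfrac{2^{\nu\omega} - 1}{2^{\omega} - 1}\Bigr)\prod_{i=1}^{\omega}\gcd(n', p_i - 1).
\]
Every $k_i \ge 1$, so the bracketed factor is an integer $\ge 2$, equal to $2$ exactly when $\nu = 1$, and the product is $\ge 1$; hence $|S(n)| = 2$ iff $\nu = 1$ and $\gcd(n', p_i - 1) = 1$ for every $i$. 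The first condition says $n$ has a prime factor $\equiv 3 \pmod 4$. For the second, $\gcd(n', p_i-1)$ is the odd part of $\gcd(n-1, p_i - 1)$, and since $\varphi(n) = \prod_i p_i^{e_i-1}(p_i-1)$ with each $p_i \nmid n-1$, an odd prime divides $\gcd(n-1,\varphi(n))$ exactly when it divides some $\gcd(n-1,p_i-1)$; so the second condition is equivalent to $\gcd(n-1,\varphi(n))$ being a power of $2$. The requirement of a prime factor $\equiv 3 \pmod 4$ excludes only $n$ all of whose prime factors are $\equiv 1\pmod 4$, and there are $O(x/\sqrt{\log x}) = o(x/\log\log\log x)$ of these by Landau's theorem; so it remains to show
\[
N(x) := \#\{\, n \le x \ \text{odd}: \gcd(n-1,\varphi(n)) \text{ is a power of } 2 \,\} \ \sim\ \frac{x e^{-\gamma}}{\log\log\log x}.
\]

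This is the analogue, with $n-1$ playing the role of $n$, of Erd\H{o}s's classical estimate $\#\{n \le x: \gcd(n,\varphi(n))=1\} \sim e^{-\gamma}x/\log\log\log x$, and I would follow the same route. Fix a cutoff $y = y(x) \to \infty$ with $\log y \sim \log\log\log x$ and $y\log y/\log\log x \to \infty$ --- for instance $y = \log\log x / \sqrt{\log\log\log x}$. The main term should come from the odd $n \le x$ for which no odd prime $\le y$ divides $n-1$; by inclusion--exclusion this set has size $(1+o(1))\,x\prod_{\ell\le y}(1-1/\ell)$ (the prime $2$ accounting for the parity of $n$), which by Mertens's theorem equals $(1+o(1))\,x e^{-\gamma}/\log y \sim x e^{-\gamma}/\log\log\log x$. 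For the lower bound one shows that all but $o(x/\log\log\log x)$ of these $n$ do have $\gcd(n-1,\varphi(n))$ a power of $2$: the only obstruction is an odd prime $\ell > y$ dividing both $n-1$ and some $p-1$ with $p\mid n$, and a first-moment count bounds the expected number of such witnesses $(\ell, p)$, over $n$ in the set, by a constant times
\[
\sum_{\ell > y}\frac1\ell \sum_{\substack{p\le x\\ p\equiv 1 \bmod \ell}}\frac1p \ \ll\ \frac{\log\log x}{y\log y} + \frac1y \ =\ o(1),
\]
via Mertens's estimate in arithmetic progressions. For the upper bound, if $n \le x$ has $\gcd(n-1,\varphi(n))$ a power of $2$ and some odd prime $\ell \le y$ divides $n-1$, then $\ell \nmid \varphi(n)$, so $n$ lies in the class $1 \bmod \ell$ and has every prime factor outside that class; the number of such $n$ should be $\ll \tfrac1\ell (\log x)^{-1/(\ell-1)} x$, and summing over odd $\ell \le y$ gives $o(x/\log\log\log x)$ for this choice of $y$. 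The three estimates together give $N(x) \sim x e^{-\gamma}/\log\log\log x$.

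I expect the main obstacle to be the estimate used in the upper step: one needs, with favorable and explicit dependence on $\ell$ for $\ell \le y = o(\log\log x)$, that the number of $n \le x$ in a prescribed residue class modulo $\ell$ all of whose prime factors avoid the class $1\bmod\ell$ is $\ll \tfrac1\ell (\log x)^{-1/(\ell-1)} x$ --- that is, both the $(\log x)^{-1/(\ell-1)}$ saving from Landau's method for integers with restricted prime factors and near-equidistribution of such $n$ among the residue classes mod $\ell$, uniformly in $\ell$. This would be obtained by applying the Landau/Selberg--Delange method against all Dirichlet characters modulo $\ell$; since $\ell$ is so small no $L$-function difficulty arises (the classical zero-free region is enough, and even an exceptional character would contribute only an admissible $(\log x)^{-c}$). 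Making this uniform, and tuning $y$ so that both error terms are genuinely $o$ --- and not merely $O$ --- of the main term (the natural choice $y = \log\log x$, or a bound without the $1/\ell$, leaves an error of the same order as the main term), is where the work lies. The remaining points are routine and are in fact already absorbed above: the characterization holds verbatim for non-squarefree $n$ and for prime powers (which number $O(\sqrt x)$ in any case), and a prime $n$ meeting both conditions must be $n = 3$.
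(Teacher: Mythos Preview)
Your outline is correct and tracks the paper's Erd\H{o}s-style argument closely: both reduce via Monier's formula to counting odd $n\le x$ with $\prod_{p\mid n}\gcd(n',p')=1$ (your reformulation as ``$\gcd(n-1,\varphi(n))$ is a power of $2$'' is equivalent), identify the main term as those $n$ whose $n'$ has no small odd prime factor, obtain the lower bound by a first-moment count over pairs $(\ell,p)$ with $\ell\mid n-1$, $p\mid n$, $p\equiv 1\pmod\ell$, and obtain the upper bound from the fact that $n\equiv 1\pmod\ell$ with no prime factor $\equiv 1\pmod\ell$ are rare.

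The one substantive difference is in the upper-bound step. You take a single cutoff $y$ with $\log y\sim\log\log\log x$ and need the combined estimate
\[
\#\{n\le x:\ n\equiv 1\ (\mathrm{mod}\ \ell),\ p\not\equiv 1\ (\mathrm{mod}\ \ell)\ \text{for all}\ p\mid n\}\ \ll\ \tfrac{x}{\ell}(\log x)^{-1/(\ell-1)},
\]
for which you reach for Selberg--Delange against characters mod $\ell$. The paper avoids this entirely by splitting into \emph{three} ranges $r<(\log\log x)^{1-\epsilon}$, $(\log\log x)^{1-\epsilon}\le r\le(\log\log x)^{1+\epsilon}$, $r>(\log\log x)^{1+\epsilon}$, and sending $\epsilon\to 0$ via a $\liminf/\limsup$ argument: in the small range only the Brun-sieve saving $\exp(-c\log\log x/r)$ is used (no $1/r$ needed, since $r$ is so small the exponent is huge), in the middle range only the density $1/r$ from ``$r$ is the least prime factor of $n'$'' is used (no sieve on prime factors of $n$ at all), and these contribute $o_\epsilon(x/\log\log\log x)$ and $O(\epsilon\,x/\log\log\log x)$ respectively. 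This keeps everything at the level of the Legendre and Brun sieves. Your route works too, and Selberg--Delange is unnecessary even there: applying Brun's sieve directly to the progression $n\equiv 1\pmod\ell$ (sieving primes $p\equiv 1\pmod\ell$ up to $x^{1/(\log\log x)^2}$, say) already yields $\ll\tfrac{x}{\ell}\exp(-c\log\log x/\ell)$ uniformly for $\ell\le y$, which suffices for your sum.
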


We also prove that the number of odd $n \leq x$ with exactly two Euler liars 
is half that amount.
In \cite[Section 6]{ErdPom86} it is noted that the number of $n \leq x$ with 
$F(n) = 1$ follows the same asymptotic formula as the count of odd $n \leq x$
with two strong liars.  All of these results utilize an argument 
from \cite{Erd48}, where Erd\H{o}s proves
the number of $n \leq x$ with $\gcd(n, \varphi(n)) = 1$ is 
also $(1+o(1))xe^{-\gamma}/\log\log\log{x}$.

It would be interesting to know 
how many $n$ with two strong liars have $r$ prime factors.  A start on that project 
is the following theorem.

\begin{theorem*} \label{thm:twoprimes}
The number of odd $n \leq x$ with $n = pq$, $p,q$ both prime and  $\gcd(p',q')=1$ is
\begin{align*}
& = (1 + o(1)) \frac{C x \log\log{x}}{\log{x}} \\
 \mbox{where   } C &:= \prod_{p > 2} \left( 1 - \frac{1}{(p-1)^2} \right) = 0.66016 \dots
\end{align*}
is the Hardy-Littlewood twin prime constant.  If $C$ is replaced by $3C/4$, we get the count 
of odd $n \leq x$ with two prime factors and $|S(n)| = 2$.
\end{theorem*}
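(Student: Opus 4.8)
The plan is to reduce everything to one sieve estimate: the number $N(x)$ of pairs of primes $p<q$ with $pq\le x$ and $\gcd(p',q')=1$, where $p'$, $q'$ are the odd parts of $p-1$, $q-1$. To obtain the ``$|S(n)|=2$'' version from this I would invoke the classical Monier--Rabin formula for $|S(n)|$. Writing $p-1=2^{a}p'$, $q-1=2^{b}q'$ with $p',q'$ odd, and $pq-1=2^{k}n'$ with $n'$ odd, that formula gives, for $n=pq$ with $p\neq q$,
$$
|S(n)|=\Bigl(1+\tfrac{4^{\kappa}-1}{3}\Bigr)\gcd(p',n')\gcd(q',n'),\qquad \kappa=\min(a,b).
$$
Since $p\equiv 1\bmod{p'}$ forces $pq-1\equiv q-1\bmod{p'}$ and $p'$ is odd, one checks $\gcd(p',n')=\gcd(p',q')$, and likewise $\gcd(q',n')=\gcd(p',q')$; hence $|S(pq)|=\gcd(p',q')^{2}\bigl(1+\tfrac{4^{\kappa}-1}{3}\bigr)$. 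The second factor is a positive integer which equals $2$ exactly when $\kappa=1$, so $|S(pq)|=2$ if and only if $\gcd(p',q')=1$ and $\min(a,b)=1$, i.e. $p$ and $q$ are not both $\equiv 1\bmod{4}$. Prime squares $n=p^{2}\le x$ number only $\pi(\sqrt{x})=o(x\log\log x/\log x)$, so they can be ignored; thus both statements reduce to counting $p<q$, $pq\le x$, with $\gcd(p',q')=1$, and (for the second) the extra constraint that $p,q$ are not both $1\bmod{4}$.

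For $N(x)$ itself I would detect the coprimality condition by M\"obius inversion. Since for odd $d$ one has $d\mid p'\iff d\mid p-1$,
$$
N(x)=\sum_{\substack{d\ \mathrm{odd}\\ \mu^{2}(d)=1}}\mu(d)\,A_d(x),\qquad A_d(x):=\#\{\,p<q:\ pq\le x,\ p\equiv q\equiv 1\bmod{d}\,\},
$$
a finite sum ($A_d(x)=0$ for $d>\sqrt{x}$). Fix a cutoff $D=D(x)\to\infty$ growing slowly (any $D\le(\log x)^{A}$ works) and split at $d=D$. For the main range $d\le D$, write $A_d(x)=\sum_{p\le\sqrt{x},\,p\equiv 1\,(d)}\bigl(\pi(x/p;d,1)-\pi(p;d,1)\bigr)$; the subtracted terms contribute $O(\pi(\sqrt{x};d,1)^{2})$, which is negligible, and for the first terms the prime number theorem in arithmetic progressions (Siegel--Walfisz, uniform for $d\le D$) together with Mertens's theorem for the progression $1\bmod{d}$ shows that the bulk comes from $p\le x^{1/\log\log x}$, giving $A_d(x)=(1+o(1))\,x\log\log x/(\varphi(d)^{2}\log x)$ uniformly in $d\le D$. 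Summing and letting $x\to\infty$, $\sum_{d\le D}\mu(d)A_d(x)=(C+o(1))\,x\log\log x/\log x$, where
$$
C=\sum_{\substack{d\ \mathrm{odd}\\ \mu^{2}(d)=1}}\frac{\mu(d)}{\varphi(d)^{2}}=\prod_{\ell>2}\Bigl(1-\frac{1}{(\ell-1)^{2}}\Bigr)
$$
is exactly the twin prime constant.

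The crux is the tail estimate $\sum_{D<d}A_d(x)=o(x\log\log x/\log x)$. The difficulty is that the easy bound $A_d(x)\ll x\log\log x/(\varphi(d)\log x)$, obtained by discarding the condition $q\equiv 1\bmod{d}$, is useless here because $\sum_d 1/\varphi(d)$ diverges; one must keep \emph{both} congruences and recover a full factor $1/\varphi(d)^{2}$. Applying Brun--Titchmarsh to the inner sum over $q$, using the uniform bound $\sum_{p\le z,\,p\equiv 1\,(d)}1/p\ll\log\log z/\varphi(d)$, and splitting the range of $p$ according to whether $d\le(x/p)^{1/2}$ (and treating the range $d>\sqrt{x}/2$, where at most one prime $p$ is available, separately), one obtains
$$
A_d(x)\ \ll\ \frac{x\log\log x}{\varphi(d)^{2}\log x}\ +\ \frac{x\log\log x}{d\,\varphi(d)},
$$
the second term present only when $d>x^{1/4}$. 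Since $\sum_{d>D}1/\varphi(d)^{2}\to 0$ and $\sum_{d>x^{1/4}}1/(d\varphi(d))$ is minuscule, the tail is $o(x\log\log x/\log x)$, whence $N(x)=(C+o(1))\,x\log\log x/\log x$. This Brun--Titchmarsh bookkeeping --- in particular keeping the $\log(y/d)$ denominators under control across the whole range of $d$ --- is the main obstacle; the rest is routine. Finally, for the ``$|S(n)|=2$'' count one runs the identical argument with the additional congruences $p\equiv q\equiv 1\bmod{4}$: every $A_d(x)$ becomes a count over the progression $1\bmod{4d}$, so $\varphi(d)$ is replaced by $\varphi(4d)=2\varphi(d)$ and each main term acquires a factor $\tfrac14$; subtracting, the number of $n\le x$ with two prime factors and $|S(n)|=2$ is $(C-\tfrac{C}{4}+o(1))\,x\log\log x/\log x=(1+o(1))\,(3C/4)\,x\log\log x/\log x$, with prime squares again negligible.
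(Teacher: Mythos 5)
Your proposal is correct, and it reaches the same main-term computation as the paper (the same reduction via Monier's formula to $\gcd(p',q')=1$ plus ``not both $p,q\equiv 1\bmod 4$,'' the same per-modulus asymptotic $|S_d|\sim x\log\log x/(\varphi(d)^2\log x)$, and the same $1\bmod 4$ restriction giving the factor $3/4$), but the way you control the inclusion--exclusion is genuinely different. The paper uses Hooley's ``simple asymptotic sieve'': it truncates at a \emph{fixed} bound $B$, so only finitely many moduli occur and the non-uniform prime number theorem for progressions (Lemma \ref{PNTprogs}) suffices; the leftover is the single set $S^{(B)}$ of products with $\gcd(p',q')$ free of primes below $B$, bounded in Lemma \ref{SBbound} by summing $|S_b|$ over primes $b\ge B$, where the troublesome $\log(x/(pb))$ in Brun--Titchmarsh is tamed by artificially stretching the range for $q$ to $b^{1/2}x/p$ (costing a harmless $\sqrt b$, since $\sum_b \sqrt b/\varphi(b)^2$ converges); the limit is then extracted by a $\liminf/\limsup$ argument letting $B\to\infty$ after $x\to\infty$. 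You instead carry out the full M\"obius sum over all odd squarefree $d\le\sqrt x$ with a slowly growing cutoff $D$, which forces you to prove the main-range asymptotic \emph{uniformly} in $d\le D$ (Siegel--Walfisz plus a uniform reciprocal-sum bound, i.e.\ the paper's Lemmas \ref{siewal} and \ref{reciprocalsums}; with those as stated you should take $D\le\log x$, which is ample) and to bound the entire tail $\sum_{d>D}A_d(x)$ directly, handling the small-$\log$ regime of Brun--Titchmarsh by splitting at $d\le(x/p)^{1/2}$ and counting the remaining range trivially by $x/(pd)+1$. Your tail bound is sound once the stray $+1$ terms (at most $\sqrt x/d+1$ primes $p$ per modulus, totalling $O(\sqrt x\log x)$) are noted, and $\sum_{d>D}\varphi(d)^{-2}\to 0$ plus the minuteness of $\sum_{d>x^{1/4}}1/(d\varphi(d))$ finish it. In short: your route needs stronger (uniform) inputs but avoids the fixed-$B$/limsup bookkeeping; the paper's route needs only non-uniform information in the main term at the price of the $S^{(B)}$ device and the $\sqrt b$ trick. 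Both are complete proofs of the stated theorem.
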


We have also proven an asymptotic formula for the number of $n = pq$ with $p,q \leq x$
and $\gcd(p',q')=1$, but will not address that result here.

For intuition on these two theorems, note that Mertens' theorem \cite[Theorem 429]{HarWri79}
gives us 
$$
\prod_{p \leq \log\log{x}} \left(1 - \frac{1}{p} \right)
= (1 + o(1)) \frac{e^{-\gamma}}{\log\log\log{x}}
$$
and that the number of positive integers $n \leq x$ that are the product of two primes 
is asymptotic to $x (\log\log{x})/\log{x}$ \cite{Lan00}.  So the count in the first theorem
 is driven by sieving by primes less than $\log\log{x}$, while  the number of $n \leq x$
 with two prime factors drives the second theorem.

Finally, we have designed a new algorithm that exactly counts the number of odd $n \leq x$ 
with two prime factors, and does so more quickly than simply applying a known formula 
to each $n$.

\begin{theorem*} \label{thm:alg}
There is an algorithm that, given $x$, computes the number of positive integers $n \leq x$
with  two strong liars.  This algorithm requires $O(x (\log{x})(\log\log{x}))$ bit operations
and $O(x \log{x})$ space.
\end{theorem*}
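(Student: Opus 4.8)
\textbf{Proof proposal for Theorem \ref{thm:alg}.}

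The plan is to exploit the structure of $S(n)$ rather than compute it naively for each $n \leq x$. For $n$ with two prime factors write $n = p^a q^b$; the key is that $|S(n)|$ can be read off from a small amount of local data at $p$ and $q$, namely the $2$-adic valuations of $p-1$ and $q-1$ (and of $p^a-1$, $q^b-1$ when $a,b>1$), together with the value $\gcd(p',q')$ — or, more precisely, the quantity $\prod_{p\mid n}\gcd(p',n')$ highlighted in the introduction. So first I would record, via a preliminary sieve of Eratosthenes over $[1,x]$, the factorization of every $n \leq x$ into prime powers; sieving to find least prime factors costs $O(x\log\log x)$ time and $O(x)$ words, i.e.\ $O(x\log x)$ bits of space, which already meets the space bound. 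At the same time, and using the same sieve, I would tabulate for each prime $p \leq x$ the pair $(\ord_2(p-1), (p-1)/2^{\ord_2(p-1)} \bmod{\text{small primes}})$ or whatever local invariants the closed-form count of $|S(n)|$ requires; an explicit formula for $|S(n)|$ in terms of such data is standard (it appears in Section \ref{sec:facts}) and evaluating it takes $O((\log n)(\log\log n))$ bit operations per $n$ once the inputs are in hand, for the gcd computations and the arithmetic on exponents of size $O(\log n)$.

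Next, I would iterate over $n \leq x$, discard those not of the form $p^a q^b$ with exactly two distinct prime factors, and for the survivors compute $|S(n)|$ from the stored local data. The gcd $\gcd(p',q')$ is computed by the Euclidean algorithm on integers of size at most $n$, which is $O((\log n)^2)$ bit operations if done directly; to hit the claimed $O((\log x)(\log\log x))$ per $n$ I would instead either use a quasi-linear gcd, or — more simply — exploit that only the part of $p'$ supported on primes up to roughly $\log x$ can contribute nontrivially to the count in the regime that matters, reducing the gcd to a bounded-size computation. Summing an indicator over all $n \leq x$ then gives the count; the total is $O(x)$ arithmetic operations each costing $O((\log x)(\log\log x))$ bit operations, hence $O(x(\log x)(\log\log x))$ overall, matching the statement.

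The main obstacle I anticipate is \emph{getting the per-$n$ cost down to $(\log x)(\log\log x)$} rather than the more obvious $(\log x)^2$ that a direct Euclidean gcd plus modular exponentiation setup would give. This is where the arithmetic has to be organized carefully: one wants to avoid a full modular exponentiation per $n$ (which would be $(\log x)^2$ or worse with schoolbook multiplication) by precomputing, during the sieve, exactly the local $2$-part and small-prime-part information that determines $|S(n)|$, so that the online phase only does a constant number of gcd-like operations on inputs whose relevant size is $O(\log x)$ and whose word count makes fast multiplication give the $\log\log x$ factor. A secondary issue is the prime-power case $a>1$ or $b>1$: here one must also handle the possibility $p \mid n-1$ being impossible but the valuations $\ord_2(p^a-1)$ differing from $\ord_2(p-1)$, which changes the formula; these $n$ are sparse ($O(\sqrt x \log x)$ of them) so even a slower per-$n$ computation for them is harmless, but the bookkeeping must be correct. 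Once the formula for $|S(n)|$ and the sieve-based precomputation are pinned down, the complexity analysis is routine.
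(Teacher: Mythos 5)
The central gap is that your algorithm does not compute the quantity in the theorem. You iterate over $n \le x$, \emph{discard all $n$ that are not of the form $p^aq^b$ with exactly two distinct prime factors}, and count the survivors with $|S(n)|=2$. But the theorem asks for the number of all positive integers $n\le x$ with two strong liars, and by Proposition \ref{prop:n/p} together with Theorem \ref{thm:twostrongliars} this count is $(1+o(1))\,xe^{-\gamma}/\log\log\log x$, dominated by $n$ with roughly $\log\log x$ prime factors; the two-prime-factor contribution is only $O(x\log\log x/\log x)$ (Theorem \ref{thm:1mod4}). So the restriction to $n=p^aq^b$ changes the answer, not just the method.

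Even setting that aside, the per-$n$ strategy cannot reach the claimed $O(x(\log x)(\log\log x))$ bit operations, and the rescue you propose is invalid. Any per-$n$ evaluation of Monier's formula requires a gcd on $\Theta(\log x)$-bit inputs, costing $\Omega(M(\log x))$, and with the fast gcd of cost $O(M(k)\log k)$ the total over all $n$ is $O(xM(\log x)\log\log x)$ --- exactly the bound of the paper's naive Algorithm \ref{alg:naive1}, which is strictly worse than the stated bound. Your suggested shortcut, that only the part of $p'$ supported on primes up to roughly $\log x$ matters, is false for an \emph{exact} count: $\gcd(p',q')$ can exceed $1$ solely because of one large common prime $r$ (e.g.\ $p=2r+1$, $q=4r+1$ both prime), and such $n$ must be excluded; ignoring primes above $\log x$ is only legitimate in the asymptotic argument, not in the tabulation. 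The paper's algorithm (Algorithm \ref{alg:sieve}) avoids per-$n$ gcds entirely: after the factoring sieve, for each prime $p\le x$ and each odd prime $r\mid p-1$ it strikes out the arithmetic progression $n\equiv p \pmod{pr}$ (precisely the $n$ for which $r\mid\gcd(p',(n/p)')$), and then a second sieve marks the survivors divisible by some prime $\equiv 3\pmod 4$. The total work is governed by $\sum_{p\le x}\sum_{r\mid p-1} x/(pr)=O(x\log\log x)$ additions of $O(\log x)$-bit numbers (using Lemma \ref{bruntitch} and Lemma \ref{reciprocalsums}), giving $O(x(\log x)(\log\log x))$ bit operations and $O(x\log x)$ space. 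This replacement of gcd computations by arithmetic-progression sieving is the key idea missing from your proposal.
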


% new section
\section{Facts about strong liars} \label{sec:facts}

We collect in this section a number of facts related to strong liars, some of which
will be useful for results in later sections.  Recall that $n'$ is the odd part of $n-1$, 
so that for example $10' = 9$ and $9' = 1$.
We use $k$ to denote $\ord_2(n-1)$, so that $n-1 = 2^k \cdot n'$.

First, note that if $n$ is odd then $\pm 1$ are always strong liars, since
$1^{n'} \equiv 1 \mod{n}$ for all $n$ and $(-1)^{n'} \equiv -1 \mod{n}$ for odd $n$.
In fact, if $n$ is odd then $a \in S(n)$ implies $-a \in S(n)$.  
For if $a^{n'} \equiv \pm 1 \mod{n}$ then $(-a)^{n'} \equiv \mp 1 \mod{n}$ which makes $-a$ 
a strong liar.  And if $a^{2^i n'} \equiv -1 \mod{n}$ for some $1 \leq i < k$, then 
$(-a)^{2^i n'} = (-1)^{2^i n'} \cdot a^{2^i n'} \equiv -1 \mod{n}$, which again makes $-a$
a strong liar.  Altogether, we see that if $n$ is odd, then $|S(n)|$ is even and at least $2$.
This means we can restrict the strong test to choosing $1 < a < (n-1)/2$ with no loss.

More generally, we would like an explicit formula for the size of $S(n)$.  This 
was accomplished by Monier.

% Monier's proposition
% note that the formula only uses the distinct prime divisors of n
\begin{proposition}[\cite{Mon80}] \label{prop:monier}
For $n$ any positive integer, let $n'$ be the odd part of $n-1$ and let $r$
be the number of distinct prime divisors of $n$.  Let $v = {\rm min}_{p \mid n} {\rm ord}_2(p - 1)$.
Then
$$
|S(n)| = \left(1 + \frac{2^{rv}-1}{2^r-1}\right)
\prod_{p \mid n} \gcd(n',p') \enspace .
$$
\end{proposition}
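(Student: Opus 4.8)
The plan is to count the strong liars by working modulo each prime power dividing $n$ separately, using the Chinese Remainder Theorem, and then partitioning the count according to which of the two defining conditions is met. Write $n = \prod_{p \mid n} p^{e_p}$. An element $a \in S(n)$ in particular satisfies $a^{n-1} \equiv \pm 1 \bmod n$, so $a$ must be a unit; hence we may work in $\prod_p (\mathbb{Z}/p^{e_p}\mathbb{Z})^*$. Fixing $p$, the group $(\mathbb{Z}/p^{e_p}\mathbb{Z})^*$ is cyclic of order $p^{e_p - 1}(p-1)$; write $p - 1 = 2^{c_p} m_p$ with $m_p$ odd, so $c_p = \ord_2(p-1)$ and $v = \min_p c_p$. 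Recall $n - 1 = 2^k n'$ with $n'$ odd. The first step is the standard one: in a cyclic group $G = \langle g \rangle$ of order $N$, the number of solutions to $x^{d} = 1$ is $\gcd(d, N)$, and the number of solutions to $x^{d} = -1$ (when $N$ is even) is either $0$ or $\gcd(d,N)$, with the latter occurring exactly when $\ord_2(d) < \ord_2(N)$ — equivalently when $\ord_2(\gcd(d,N)) < \ord_2(N)$. Applying this with $d = 2^i n'$ and $N = p^{e_p-1}(p-1)$, and noting $\gcd(n', p^{e_p-1}(p-1)) = \gcd(n', p-1) = \gcd(n', p')$ since $n'$ is odd and coprime to $p$, one gets clean local counts.

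The second step is to assemble the count of $a$ with $a^{n'} \equiv 1 \bmod n$: by CRT this is $\prod_{p \mid n} \gcd(n', p')$, which accounts for the product term, and corresponds to the ``$i = 0$, root of $+1$'' case (condition 2). For condition 1, I would count, for each fixed $i$ with $0 \le i < k$, the number of $a$ with $a^{2^i n'} \equiv -1 \bmod n$. By CRT this is a product over $p$ of local solution counts of $x^{2^i n'} = -1$ in the cyclic group mod $p^{e_p}$; each local count is $\gcd(2^i n', p^{e_p-1}(p-1)) = 2^{\min(i, c_p)} \gcd(n', p')$ when $i < c_p$ (so that $-1$ is a power of the relevant order) and $0$ when $i \ge c_p$. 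Hence the product over $p$ is nonzero precisely when $i < c_p$ for every $p$, i.e. $i < v$, in which case it equals $2^{ri} \prod_p \gcd(n',p')$. Summing over $0 \le i < v$ gives $\bigl(\sum_{i=0}^{v-1} 2^{ri}\bigr)\prod_p \gcd(n',p') = \frac{2^{rv}-1}{2^r - 1}\prod_p\gcd(n',p')$. Adding the condition-2 count (the ``$1$'' in the formula) and observing that the ``$a^{n'}\equiv 1$'' set is disjoint from each ``$a^{2^i n'}\equiv -1$'' set (a unit cannot satisfy both since $-1 \ne 1$ mod any $p^{e_p}$ with $p$ odd, and the various $-1$-conditions for distinct $i$ are pairwise disjoint because $a^{2^i n'}=-1$ forces $a$ to have $2$-adic valuation exactly $c_p - i$ in its order mod $p$) yields exactly the stated formula.

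The main obstacle — really the only delicate point — is the bookkeeping that makes the union over the $k$ conditions a genuine disjoint union, and pinning down exactly when $x^{2^i n'} = -1$ is solvable locally. The cleanest way to handle both is to fix a prime $p \mid n$, let $\ell_p$ denote the $2$-adic valuation of the order of $a$ in $(\mathbb{Z}/p^{e_p}\mathbb{Z})^*$ (so $0 \le \ell_p \le c_p$), and note: $a^{n'} \equiv 1$ for all $p$ iff $\ell_p = 0$ for all $p$; and $a^{2^i n'} \equiv -1 \bmod n$ iff for every $p$ one has $\ell_p = c_p - i$ wait — more precisely $a^{2^i n'} \equiv -1 \bmod{p^{e_p}}$ iff $a^{2^{i+1} n'} \equiv 1$ but $a^{2^i n'} \not\equiv 1$, i.e. $\ell_p = i+1$... let me restate: the condition is that $i+1$ equals the common value needed, forcing $\ell_p$ to be the same for all $p$, which is possible only up to $i < v$. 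Tracking this valuation $\ell_p$ uniformly across all $p$ is what forces the $\min$ over primes to appear and what guarantees disjointness; I would isolate this as the one careful lemma and let the rest follow by the elementary cyclic-group count and CRT, exactly as sketched.
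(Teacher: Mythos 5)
The paper offers no proof of this proposition: it is quoted directly from Monier \cite{Mon80}, so there is nothing internal to compare against. Your argument is the standard derivation of Monier's formula and is correct in substance: reduce to units, split modulo each prime power by CRT, count solutions of $x^{n'}=1$ and $x^{2^i n'}=-1$ in the cyclic group $(\mathbb{Z}/p^{e_p}\mathbb{Z})^*$ (getting $\gcd(n',p')$, respectively $2^i\gcd(n',p')$ exactly when $i<c_p$ and $0$ otherwise), multiply over $p$ to force $i<v$, sum the geometric series, and observe the defining conditions are pairwise disjoint.

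Three points need tightening. First, the parenthetical justification of disjointness (``valuation exactly $c_p-i$'') is wrong as written: $a^{2^i n'}\equiv -1 \bmod{p^{e_p}}$ forces the $2$-adic valuation of the order of $a$ to be $i+1$, as you yourself correct in the final (rather tangled) paragraph; the cleanest statement is simply that $a^{2^i n'}\equiv -1$ implies $a^{2^j n'}\equiv 1\neq -1$ for all $j>i$, and that $a^{n'}\equiv 1$ implies $a^{2^i n'}\equiv 1$ for all $i$. Second, summing over all $0\le i<v$ silently assumes $v\le k$; if $v>k$ the admissible range $0\le i<k$ would truncate the geometric series and the formula would fail, so you must note that each $p\mid n$ satisfies $p\equiv 1\bmod 2^v$, hence $n\equiv 1\bmod 2^v$ and $2^v\mid n-1$, i.e.\ $v\le k$. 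Third, the proposition is stated for arbitrary positive $n$, while $(\mathbb{Z}/2^{e}\mathbb{Z})^*$ is not cyclic for $e\ge 3$; for even $n$ one has $k=0$ and $v=0$, so only $x^{n'}=1$ matters, and in the $2$-group modulo $2^{e}$ its unique solution is $x=1$, matching $\gcd(n',p')=1$ for $p=2$ --- a one-line case you should dispatch separately rather than fold into the cyclic-group count. With these repairs your proof is complete and is essentially Monier's own.
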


It quickly follows that $|S(n)| \leq \varphi(n)/4$ when $n > 9$ is odd and composite.
  Thus by performing $\log_2{(\frac{1}{\sqrt \epsilon})}$ 
independent trials we can lower the probability that $n$ is a composite falsely 
reported as prime to below $\epsilon$.
Note that if $n$ is prime, Proposition \ref{prop:monier} correctly gives
$|S(n)| = (1 + 2^k - 1) \gcd(n', n') = n-1$.

We briefly address the worst case, i.e. composite $n > 9$ for which 
$|S(n)|$ reaches the maximum of $\varphi(n)/4$.  Such $n$ are fairly easy 
to characterize, if not quite so easy to count.  Consider the following theorem.

% worst cases
\begin{theorem}[\cite{DamLadPom93}] \label{thm:worst}
Let $C_3$ be the set of odd, composite integers $n$ with $|S(n)| > \varphi(n)/8$.
Then $C_3$ is composed of the following:
\begin{enumerate}
\item[(1)] $(m+1)(2m+1)$, where $m+1$, $2m+1$ are odd primes, 
\item[(2)] $(m+1)(3m+1)$, where $m+1$, $3m+1$ are primes congruent to $3 \mod{4}$,
\item[(3)] Carmichael numbers $n$ with three prime factors where there exists integer $s$
with $2^s$ exactly dividing $p-1$ for all $p \mid n$,
\item[(4)] $9, 25, 49$.
\end{enumerate}
\end{theorem}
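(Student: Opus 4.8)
The plan is to extract everything from Monier's formula (Proposition~\ref{prop:monier}). Write $n=\prod_{i=1}^{r}p_i^{e_i}$ with $p_i-1=2^{k_i}p_i'$ ($p_i'$ odd) and $v=\min_i k_i$. Since $\varphi(n)=\bigl(\prod_i p_i^{e_i-1}\bigr)\cdot 2^{\sum_i k_i}\cdot\prod_i p_i'$, Monier's formula rearranges to
\[
\frac{|S(n)|}{\varphi(n)}
= \phi(r,v)\cdot 2^{\,rv-\sum_i k_i}\cdot\prod_i\frac{\gcd(n',p_i')}{p_i'}\cdot\prod_i\frac{1}{p_i^{\,e_i-1}},
\qquad
\phi(r,v):=\frac{1+(2^{rv}-1)/(2^r-1)}{2^{rv}} .
\]
Each of the last three factors lies in $(0,1]$, and a short computation shows $\phi(r,v)$ is nonincreasing in $v$ with $\phi(r,1)=2^{1-r}$; hence $|S(n)|/\varphi(n)\le 2^{1-r}$. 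Thus $n\in C_3$ forces $r\le 3$: for $r=4$ the bound equals $1/8$ and is attained only with $v=1$ and all three correction factors equal to $1$, which gives equality, not the strict inequality defining $C_3$, while for $r\ge 5$ the bound is already below $1/8$.

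I would then dispatch $r=1$ and $r=3$, which are comparatively quick. For $r=1$ one has $\phi(1,v)\equiv 1$, so the identity collapses to $|S(n)|/\varphi(n)=\gcd(n',p')/(p'\,p^{\,e-1})\le p^{\,1-e}$; since $n$ is composite ($e\ge 2$) this exceeds $1/8$ only when $p^{\,e-1}\le 7$, i.e.\ $n\in\{9,25,49\}$, each of which is then checked directly to lie in $C_3$ --- this is family~(4). For $r=3$ the factor $\prod_i p_i^{1-e_i}$ must equal $1$ (otherwise the bound drops below $1/8$), so $n$ is squarefree; ordering $k_1\le k_2\le k_3$ one computes $\phi(3,v)\cdot 2^{\,3v-k_1-k_2-k_3}=\tfrac{1}{7}\bigl(2^{\,2k_1-k_2-k_3}+6\cdot 2^{\,-k_1-k_2-k_3}\bigr)$, and requiring the full product to exceed $1/8$ forces first $k_1=k_2=k_3$ and then $\prod_i\gcd(n',p_i')/p_i'=1$, that is, $p_i'\mid n'$ for every $i$. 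With all $k_i$ equal one verifies $\ord_2(n-1)=k_1$, hence $p_i-1\mid n-1$ for all $i$; since $n$ is squarefree this says exactly that $n$ is a Carmichael number with three prime factors all sharing the same $\ord_2(p-1)$ --- family~(3) --- and conversely each such $n$ has $|S(n)|/\varphi(n)=\tfrac{1}{7}\bigl(1+6\cdot 8^{-k_1}\bigr)>1/8$.

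The case $r=2$ is the crux. A refinement of the estimate above, handling $n$ divisible by $9$ by hand, forces $n=pq$ squarefree. Write $n-1=pq-1=p(q-1)+(p-1)$ and set $k_1=\ord_2(p-1)\le k_2=\ord_2(q-1)$. One first shows $k_2-k_1\le 1$: when $k_2-k_1\ge 2$ the factor $2^{\,2v-k_1-k_2}$ is small enough that $|S(n)|/\varphi(n)\le 1/8$. If $k_2=k_1+1$, expanding $n-1$ gives $\ord_2(n-1)=k_1$ and $n'=2pq'+p'$, which forces $\gcd(n',p')/p'=\gcd(n',q')/q'=1$ (their product must be $1$); the resulting divisibilities give $p'\mid q'$ and $q'\mid p'$, hence $p'=q'$ and $q=2p-1$ --- family~(1). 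If $k_1=k_2$, a sharper look shows one must have $k_1=k_2=1$ and $\{\gcd(n',p')/p',\,\gcd(n',q')/q'\}=\{1,1/3\}$; taking $p'\mid q'$ (forced since $p<q$) and writing $q'=d\,p'$ with $d$ odd, one finds $n'=p'u$ with $u$ the odd part of $pd+1$, so the elementary identity $\gcd(u,d)=\gcd(pd+1,d)=\gcd(1,d)=1$ gives $\gcd(n',q')/q'=1/d$ and therefore $d=3$, i.e.\ $q=3p-2$; the congruences $p\equiv q\equiv 3\pmod 4$ then follow automatically. A final check confirms that families~(1) and~(2) do satisfy $|S(n)|>\varphi(n)/8$. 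I expect this $r=2$ analysis --- converting the single crude inequality $|S(n)|>\varphi(n)/8$ into exact divisibility relations among $p'$, $q'$ and $n'$, and eliminating every value of $d$ other than $3$ --- to be the main obstacle; everything else is bookkeeping built on Monier's formula.
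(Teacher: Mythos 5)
The paper does not actually prove this theorem --- it is quoted from \cite{DamLadPom93} --- so there is no in-paper argument to compare against; judged on its own, your Monier-formula case analysis (bounding $|S(n)|/\varphi(n)\le 2^{1-r}$ to force $r\le 3$, then classifying each $r$) is correct and follows essentially the same route as the cited source. The only points left implicit --- ruling out the non-squarefree possibility $n=9q$ in the $r=2$ case (where $q'\mid n'$ would force $q'\mid 8$), and observing that a gcd-product equal to $1$ with $k_1=k_2$ would give $p'=q'$, hence $p=q$ --- are one-line checks that your $d$-parametrization already covers.
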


By Proposition \ref{prop:monier}, it follows that the $n$ with $|S(n)| = \varphi(n)/4$ are 
exactly $n$ in case (1) with $2 \| m$ and Carmichael numbers in case (3) whose
three prime factors are all congruent 
to $3$ modulo $4$ (this also appears in the proof to Theorem \ref{thm:worst}).

Unfortunately, an asymptotic formula for either case remains elusive.  Nor has it been proven
that there are infinitely many integers in either case.  On the other hand, 
infinitely many $n$ of the form 
$(m+1)(2m+1)$ would follow from the strong prime tuples
conjecture \cite{BateHorn62}, and there is a precise conjecture on the number 
of Carmichael numbers with three prime factors.  

\begin{conjecture}[\cite{GranPom02}]
The number of Carmichael numbers with three prime factors is asymptotic to 
$$
C \frac{x^{1/3}}{\log^3{x}} \enspace ,
$$
where $C$ is an absolute constant that can be given precisely.
\end{conjecture}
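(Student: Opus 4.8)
The statement is a heuristic conjecture, and I stress at the outset that an unconditional proof appears to be beyond current technology: it is not even known whether there are infinitely many Carmichael numbers with exactly three prime factors, since the Alford--Granville--Pomerance construction produces numbers with many prime factors. Accordingly, the plan is to give a probabilistic derivation of the asymptotic, isolating where the constant $C$ comes from and pinpointing the analytic obstruction to rigor. The starting point is Korselt's criterion: $n = pqr$ with $p < q < r$ distinct odd primes is a Carmichael number if and only if $n$ is squarefree and $(p-1) \mid (n-1)$, $(q-1) \mid (n-1)$, $(r-1) \mid (n-1)$. Reducing $n = pqr$ modulo each $p_i - 1$ and using $p_i \equiv 1$, the three conditions become
$$
qr \equiv 1 \pmod{p-1}, \quad pr \equiv 1 \pmod{q-1}, \quad pq \equiv 1 \pmod{r-1}.
$$
Thus the quantity to estimate is the number of prime triples $p < q < r$ with $pqr \leq x$ satisfying these three simultaneous congruences.

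The plan is to evaluate this triple sum by treating the three congruences as nearly independent local constraints and extracting the main term from the regime where $p, q, r$ are all of order $x^{1/3}$. First I would fix the two smaller primes $p, q$ and count admissible $r \leq x/(pq)$. The first two congruences pin $r$ to a single residue class modulo $\lcm(p-1, q-1)$ (whenever $\gcd(q, p-1) = \gcd(p, q-1) = 1$, which holds outside a negligible set), so by the prime number theorem in arithmetic progressions the number of such primes $r$ up to $x/(pq)$ is heuristically $\bigl(x/(pq)\bigr) / \bigl(\varphi(\lcm(p-1,q-1)) \log(x/(pq))\bigr)$, after which the third condition $(r-1) \mid (pq-1)$ enters as a further density factor. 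Summing over $p, q$ and tracking the ranges carefully---the dominant contribution coming from $p, q, r$ all near $x^{1/3}$, which turns one factor of $x$ and three factors of $1/\log$ into the shape $x^{1/3}/\log^3 x$---should recover the claimed order of magnitude.

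The constant $C$ would then be assembled as an Euler product encoding how the three Korselt conditions interact at each prime $\ell$: for a fixed $\ell$ one computes the local probability that $p-1$, $q-1$, $r-1$ share (or fail to share) a factor of $\ell$ in a manner compatible with the congruences, and the global constant is the product of these local densities times a volume factor coming from integrating $1/(uvw)$ over the simplex $uvw \leq 1$ that governs the distribution of the three prime sizes. I would determine $C$ precisely by matching this Euler product against the normalization, exactly as in the treatment of Granville and Pomerance.

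The main obstacle---and the reason this remains a conjecture---is twofold. Analytically, the moduli $\lcm(p-1, q-1)$ that arise are as large as $x^{2/3}$, well beyond the level of distribution $x^{1/2}$ supplied unconditionally by the Bombieri--Vinogradov theorem; counting primes $r$ in such progressions on average requires a level of distribution for primes that is simply not available (even GRH controls individual moduli rather than furnishing the needed uniformity). Combinatorially, the three congruences are modeled as independent, whereas the divisors shared among $p-1$, $q-1$, $r-1$ induce genuine correlations, and controlling these rigorously---rather than absorbing them into the heuristic Euler product---is precisely what is missing. For these reasons I expect only the \emph{heuristic} derivation of $C$ to be feasible, with an unconditional proof of the asymptotic, or even of the infinitude of three-prime Carmichael numbers, remaining out of reach.
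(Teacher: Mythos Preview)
The paper does not prove this statement at all: it is stated as a \emph{conjecture}, attributed to Granville and Pomerance, and is included only to give context for the difficulty of counting worst-case composites. There is no argument in the paper to compare your proposal against.

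Your proposal correctly identifies the status of the statement and gives a sensible heuristic outline in the spirit of the original Granville--Pomerance derivation: parametrize by the two smaller primes, count the largest prime in an arithmetic progression of modulus roughly $x^{2/3}$, and assemble the constant as an Euler product of local densities. Your diagnosis of the obstructions (level of distribution far beyond Bombieri--Vinogradov, and dependence among the Korselt congruences) is accurate and is precisely why the paper presents this only as a conjecture. Since the paper offers no proof, your heuristic is neither the same as nor different from the paper's approach; it simply goes further than the paper, which merely cites the conjecture without derivation.
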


With the help of Theorem \ref{thm:twostrongliars},
it can be shown that $n$ with $|S(n)| = 2$ are much more common 
than $n$ with $|S(n)| = \varphi(n)/4$.

We now shift to counting odd $n$ with exactly two strong liars. 
The following characterization will be useful.

% when are there only 2 strong liars?
\begin{proposition}\label{prop:n/p}
Suppose $n$ is odd and composite.  Then $|S(n)| = 2$ 
 if and only if 
1) $n$ is  divisible by $p \equiv 3 \bmod{4}$ and 
2) $\gcd(p', (n/p)') = 1$ for all primes $p$ dividing $n$.
\end{proposition}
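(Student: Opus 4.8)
The plan is to work directly from Monier's formula in Proposition~\ref{prop:monier}. Write $|S(n)| = \bigl(1 + \tfrac{2^{rv}-1}{2^r-1}\bigr)\prod_{p \mid n}\gcd(n',p')$, where $r$ is the number of distinct primes dividing $n$ and $v = \min_{p \mid n}\ord_2(p-1)$. Since the product $\prod_{p \mid n}\gcd(n',p')$ is a positive integer, $|S(n)| = 2$ forces both factors to be as small as possible: the bracketed factor must equal $2$ and the product $\prod_{p \mid n}\gcd(n',p')$ must equal $1$. So the whole proof reduces to characterizing when each of these two conditions holds and matching them to the two stated conditions.

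First I would analyze the factor $1 + \tfrac{2^{rv}-1}{2^r-1}$. This equals $2$ exactly when $\tfrac{2^{rv}-1}{2^r-1} = 1$, i.e. when $2^{rv} - 1 = 2^r - 1$, i.e. $rv = r$, i.e. $v = 1$ (since $r \geq 1$). Now $v = \min_{p\mid n}\ord_2(p-1) = 1$ says that \emph{some} prime $p \mid n$ has $\ord_2(p-1) = 1$, which is precisely the statement that $p \equiv 3 \bmod 4$ (no prime factor can have $\ord_2(p-1)=0$ since all prime factors of odd $n$ are odd, so the minimum is $\geq 1$ automatically, and it equals $1$ iff some $p\mid n$ satisfies $4 \nmid p-1$). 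This is condition (1).

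Next I would show that $\prod_{p\mid n}\gcd(n',p') = 1$ is equivalent to condition (2), namely $\gcd(p',(n/p)')=1$ for all primes $p\mid n$. The product being $1$ is equivalent to $\gcd(n',p')=1$ for every $p\mid n$ (each factor is a positive integer). So I need: $\gcd(n',p')=1$ for all $p\mid n$ $\iff$ $\gcd(p',(n/p)')=1$ for all $p\mid n$. For the forward direction, note that $(n/p)' \mid n'$? That is not literally true in general — the odd part of $(n/p)-1$ need not divide the odd part of $n-1$ — so I must be a little careful here. Instead I would argue modularly: $p' \mid p-1$ and $(n/p)' \mid (n/p)-1$, and both $p-1$ and $(n/p)-1$ are congruent appropriately modulo small primes dividing $n-1$. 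The cleanest route: for an odd prime $\ell$, $\ell \mid n'$ iff $\ell \mid n-1$; and one shows that for any prime $p\mid n$, the odd prime divisors of $p'$ are among those of $n'$ together with those of $(n/p)'$, using that $n - 1 \equiv (p-1)\frac{n}{p} + (\frac{n}{p}-1) \pmod{\text{anything}}$ — more precisely, if $\ell \mid p-1$ and $\ell \mid \frac{n}{p}-1$ then $\ell \mid n - 1$; conversely if $\ell \mid p - 1$ and $\ell \mid n - 1$ then $\ell \mid \frac{n}{p} - 1$ (since $n - 1 - (p-1)\frac{n}{p} = \frac{n}{p} - 1$). This elementary congruence $\frac{n}{p} - 1 = (n-1) - (p-1)\frac{n}{p}$ is the crux: it shows that for an odd prime $\ell$, $\ell$ divides at least two of $\{p-1,\ \frac{n}{p}-1,\ n-1\}$ iff it divides all three. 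From this, $\gcd(p', (n/p)') = 1$ for all $p$ is equivalent to $\gcd(p', n') = 1$ for all $p$, completing the equivalence.

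I expect the main obstacle to be exactly this last equivalence — getting the bookkeeping right between the three ``odd part'' quantities, since "$d$ divides the odd part of $m$'' is the same as "$d$ is odd and $d \mid m$'', so one must consistently reduce every gcd statement to a statement about odd divisors of $p-1$, $\frac np - 1$, and $n-1$, and then apply the identity $\frac np - 1 = (n-1) - (p-1)\frac np$. Everything else (the structure of Monier's factor, the reduction to two independent conditions, translating $v=1$ to $p\equiv 3\bmod 4$) is routine. One small point to double-check: when $n$ is a prime power $p^k$, condition (2) reads $\gcd(p', (p^{k-1})')=1$, and since $p^{k-1}-1$ is even with odd part dividing it, this still makes sense and the argument above goes through with $n/p = p^{k-1}$; I would verify the characterization is consistent with known small cases such as $n = 9, 25, 49$ (which should fail condition~(1) or~(2), since these have $|S(n)| = \varphi(n)/4 \neq 2$ for $n>3$... indeed $|S(9)|=2$? one checks Monier directly as a sanity test).
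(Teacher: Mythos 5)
Your proposal is correct and takes essentially the same route as the paper: reduce via Monier's formula to the bracketed factor equalling $2$ (equivalently $v=1$, i.e.\ some $p \equiv 3 \bmod 4$ divides $n$) together with $\prod_{p\mid n}\gcd(n',p')=1$, and then use the elementary identity $\tfrac{n}{p}-1=(n-1)-(p-1)\tfrac{n}{p}$ — a rearrangement of the paper's $(p-1)(n/p-1)=n-1-(n/p-1)-(p-1)$ — to get $\gcd(p',n')=\gcd(p',(n/p)')$. (Minor aside: in your sanity check, $9$ in fact satisfies both conditions and $|S(9)|=2$, consistent with the proposition; $25$ fails (1) and $49$ fails (2).)
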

\begin{proof}
 First notice that 
$$
(p-1)(n/p - 1) = n-1 - (n/p - 1) - (p-1) \enspace ,
$$
so that $\gcd(p', n') = \gcd(p', (n/p)')$.

Thus if $\gcd(p',(n/p)')=1$ for all $p \mid n$ the product term in Monier's formula is $1$.
If $n$ is odd and divisible by $p \equiv 3 \bmod{4}$ then $v=1$ and we conclude 
that $L(n)=2$.  

If instead we assume $L(n)=2$ then $1 + \frac{2^{rv}-1}{2^r-1} \leq 2$.
  If $n$ is odd then 
$1 + \frac{2^{rv}-1}{2^r-1} \geq 2$, with equality only if $v=1$.  Thus 
$n$ is divisible by a prime congruent to $3$ modulo $4$ and 
$\prod_{1\leq i \leq r} \gcd(n',p_i') = \prod_{1\leq i \leq r} \gcd(n',(n/p_i)') = 1$.
\end{proof}

Monier  also proved a formula for Euler liars.

% formula for Euler liars
\begin{proposition}[\cite{Mon80}]\label{prop:eulerliars}
Let $n$ be odd.  Define $e(n) = \prod_{p \mid n} \gcd( \frac{n-1}{2}, p-1)$
and 
$$
\delta(n) = \left\{ \begin{array}{ll} 2 & \mbox{ if $v = \ord_2(n-1)$  } \\
 1/2 & \mbox{ if there is $p \mid n$ with $\ord_2(p-1) < \ord_2(n-1)$ and $\ord_p(n)$ odd} \\
 1 & \mbox{ otherwise, i.e. $\ord_p(n)$ even for all $p \mid n$ with $\ord_2(p-1) < \ord_2(n-1)$} \end{array} \right .
 $$
 Then $|E(n)| = \delta(n) \cdot e(n)$.
\end{proposition}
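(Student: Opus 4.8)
The plan is to translate the Euler condition into a linear system over residue rings and count its solutions. Write $n = \prod_{i=1}^{r} p_i^{a_i}$ and, via the Chinese Remainder Theorem, identify $(\mathbb{Z}/n\mathbb{Z})^{*}$ with $\prod_i (\mathbb{Z}/p_i^{a_i}\mathbb{Z})^{*}$. Fix a primitive root $g_i$ modulo $p_i^{a_i}$, so a unit $a$ is encoded by its vector of discrete logarithms $(x_1,\dots,x_r)$, with $x_i$ taken modulo $m_i := \varphi(p_i^{a_i})$. Since $g_i$ reduces to a primitive root modulo $p_i$, it is a quadratic nonresidue there, so $(a \mid p_i) = (-1)^{x_i}$ and hence $(a \mid n) = (-1)^{\sum_j a_j x_j}$. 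Using $-1 = g_i^{m_i/2}$, the condition $a^{(n-1)/2} \equiv (a\mid n) \bmod n$ becomes the system
$$
\frac{n-1}{2}\, x_i \equiv \frac{m_i}{2}\sum_{j=1}^{r} a_j x_j \pmod{m_i}, \qquad i = 1,\dots,r,
$$
and $|E(n)|$ is exactly the number of its solutions.

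Next I would split $m_i = 2^{b_i} q_i$ with $q_i$ odd and $b_i = \ord_2(p_i-1)$, and separate the system. Reduced modulo $q_i$ the $i$-th congruence is simply $\frac{n-1}{2}x_i \equiv 0 \pmod{q_i}$, and since $p_i \mid n$ gives $\gcd(n', p_i)=1$ this has exactly $\gcd(n', p_i')$ solutions; these coordinates decouple and contribute the odd factor $\prod_i \gcd(n', p_i')$. Reduced modulo $2^{b_i}$ the $i$-th congruence becomes $2^{k-1}n'\, x_i \equiv \varepsilon\, 2^{b_i-1} \pmod{2^{b_i}}$, where $n-1 = 2^{k}n'$ and $\varepsilon := \sum_j a_j x_j \bmod 2$ is common to all $i$. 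Thus $|E(n)| = T \cdot \prod_i \gcd(n', p_i')$, where $T$ is the number of solutions of this $2$-primary system; since $e(n) = 2^{\sum_i \min(k-1, b_i)} \prod_i \gcd(n', p_i')$, it remains to prove $T = \delta(n)\cdot 2^{\sum_i \min(k-1, b_i)}$.

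To evaluate $T$ I would split on the value of $\varepsilon$. When $\varepsilon = 1$, the $i$-th congruence forces $\ord_2(x_i) = b_i - k$, possible only if $k \le b_i$ for every $i$, i.e.\ $k \le v$; as $p_i \equiv 1 \bmod 2^{v}$ for all $i$ already gives $k \ge v$, this half is empty unless $k = v$, and when $k = v$ each admissible tuple satisfies $\sum_j a_j x_j \equiv \sum_{b_j = v} a_j \pmod 2$. When $\varepsilon = 0$, the $i$-th congruence says $2^{\max(b_i - k + 1,\, 0)} \mid x_i$, and on that solution set $\sum_j a_j x_j$ reduces mod $2$ to $\sum_{j:\, b_j < k} a_j x_j$ with those $x_j$ free modulo $2$. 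So the parity of $\sum_j a_j x_j$ over the solution set is either identically even or equidistributed, governed by whether $\ord_2(n-1) = v$ and whether some $p \mid n$ has $\ord_2(p-1) < k$ with $\ord_p(n)$ odd. The bridge to the first condition is the elementary congruence
$$
n - 1 \equiv \Big(\sum_{\ord_2(p_j - 1) = v} \ord_{p_j}(n)\Big) 2^{v} \pmod{2^{v+1}},
$$
obtained by expanding $n = \prod_j p_j^{a_j}$ modulo $2^{v+1}$, so that $\ord_2(n-1) = v$ exactly when that sum is odd. Matching the three resulting outcomes — $k = v$, where both halves are full and $T = 2 \cdot 2^{\sum_i \min(k-1,b_i)}$; $k > v$ with a prime as in the second clause of $\delta(n)$, where the $\varepsilon = 0$ half is halved and $T = \tfrac12 \cdot 2^{\sum_i \min(k-1,b_i)}$; and $k > v$ with no such prime, where $T = 2^{\sum_i \min(k-1,b_i)}$ — against the definition of $\delta(n)$ completes the proof.

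The step I expect to be hardest is this last count: because $\varepsilon$ couples all coordinates, $T$ is not a product of per-coordinate counts, and several boundary situations need care — a prime with $b_i = v = k$, coordinates with $b_i \le k-1$ (unconstrained when $\varepsilon = 0$), and the degenerate case $r = 1$, which must recover $|E(p)| = p-1$ for prime $n = p$. The crux is verifying that $k = v$ automatically makes the $\varepsilon = 1$ solutions nonempty and forces $\varepsilon = 1$ on them, so that the two halves genuinely combine into the factor $2$ demanded by the first clause of $\delta(n)$.
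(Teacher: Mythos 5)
Your proposal is correct, but note that the paper does not prove this proposition at all: it is quoted from Monier's paper \cite{Mon80}, so there is no internal proof to compare against. Your reconstruction is essentially the classical argument (and close in spirit to Monier's own): pass to discrete logarithms on the cyclic factors of $(\mathbb{Z}/n\mathbb{Z})^{*}$, express the Euler condition as the system $\frac{n-1}{2}x_i \equiv \frac{m_i}{2}\varepsilon \pmod{m_i}$ with $\varepsilon$ the parity of $\sum_j a_j x_j$, observe that the odd parts decouple and contribute $\prod_i \gcd(n',p_i')$ (using $\gcd(n',p_i)=1$ to replace $\varphi(p_i^{a_i})$ by $p_i-1$), and then count the coupled $2$-primary system. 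Your treatment of the delicate points checks out: the $\varepsilon=1$ branch is nonempty only when $k=v$, and in that case the bridge congruence $n-1 \equiv \bigl(\sum_{\ord_2(p_j-1)=v} a_j\bigr)2^{v} \pmod{2^{v+1}}$ (which is literally valid, since the omitted odd cofactors $u_j$ satisfy $a_j(u_j-1)\equiv 0 \bmod 2$) forces $\sum_{b_j=v}a_j$ to be odd, so consistency of $\varepsilon=1$ is automatic and each branch contributes $2^{\sum_i \min(k-1,b_i)}$; while for $k>v$ the parity constraint on the free coordinates with $b_j<k$ either halves the count (some such $a_j$ odd, the $\delta=1/2$ clause) or is vacuous (all such $a_j$ even, the $\delta=1$ clause). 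This matches $\delta(n)\cdot e(n)$ in all cases, so the argument is complete in outline with no substantive gap; the only work left is writing out the routine solution counts you indicate.
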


If $n$ is odd then the minimum number of Euler liars is $2$ since $\pm 1$ 
are always Euler liars.  

% 2 Euler liars
\begin{proposition} \label{prop:eulercases}
Suppose $n$ is odd and composite.  Then $|E(n)| = 2$ if and only if
\begin{enumerate}
\item $n \equiv 3 \mod{4}$ and $\prod_{p \mid n} \gcd(p', n') = 1$, or
\item $n \equiv 1 \mod{4}$ with $n = pq$, $p, q \equiv 3 \mod{4}$, 
and $\gcd(p',n') \cdot \gcd(q',n') = 1$.
\end{enumerate}

\end{proposition}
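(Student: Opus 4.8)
The plan is to derive the classification from the two Monier‑type formulas already in hand — Proposition~\ref{prop:monier} for $|S(n)|$ and Proposition~\ref{prop:eulerliars} for $|E(n)|$ — bridged by the containment $\{\pm 1\}\subseteq S(n)\subseteq E(n)$. First I would note that $|E(n)|=2$ forces $E(n)=S(n)=\{\pm 1\}$, so in particular $|S(n)|=2$, and Proposition~\ref{prop:n/p} then says $n$ has a prime factor $\equiv 3\bmod 4$ and $\prod_{p\mid n}\gcd(p',n')=1$ (using the identity $\gcd(p',n')=\gcd(p',(n/p)')$ from its proof); conversely, these two conditions already give $|S(n)|=2$. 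So the whole problem reduces to deciding which of the $n$ with $|S(n)|=2$ additionally satisfy $|E(n)|=2$.

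For such $n$ I would write $n-1=2^k n'$ and $p-1=2^{\ord_2(p-1)}p'$ for each prime $p\mid n$. Since $\prod_{p\mid n}\gcd(n',p')=1$, each $\gcd(n',p')=1$, so $\gcd\bigl(\tfrac{n-1}{2},p-1\bigr)=2^{\min(k-1,\,\ord_2(p-1))}$, whence $e(n)=2^{E}$ with $E:=\sum_{p\mid n}\min(k-1,\ord_2(p-1))$ (sum over distinct primes), and Proposition~\ref{prop:eulerliars} gives $|E(n)|=\delta(n)\,2^{E}$. Everything now comes down to the constraint $\delta(n)\,2^{E}=2$. The easy regime is $n\equiv 3\bmod 4$: then $k=1$, so every summand of $E$ is $\min(0,\cdot)=0$ and $E=0$, while $n$ has a prime factor $\equiv 3\bmod 4$ so $v=\ord_2(n-1)=1$ and $\delta(n)=2$; thus $|E(n)|=2$ automatically. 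Since any $n\equiv 3\bmod 4$ with $\prod_{p\mid n}\gcd(p',n')=1$ automatically has a prime factor $\equiv 3\bmod 4$, this is exactly case~(1).

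The substance is the regime $n\equiv 1\bmod 4$, where $k\ge 2$, every summand of $E$ is $\ge 1$ so $E\ge r$ with $r$ the number of distinct prime factors, and $v=1<k$ forces $\delta(n)\in\{1/2,1\}$; hence $\delta(n)\,2^{E}=2$ leaves only $(\delta,E)=(1,1)$ or $(1/2,2)$. In the first case $E=1$ forces $r=1$, a prime power $n=p^a$; here $p-1\mid p^a-1$ gives $\gcd(n',p')=p'$, so $p'=1$, which together with the required factor $\equiv 3\bmod 4$ constrains $p$, and the definition of $\delta$ forces $\ord_p(n)$ even. In the second case $E=2$ forces $r\le 2$; $r=1$ is impossible, since $\ord_2(p-1)<k$ (needed so that $\delta\neq 2$) together with $\ord_p(n)$ odd (needed so that $\delta=1/2$ with $p$ the only prime) would give $\ord_2(p^a-1)=\ord_2(p-1)$ by lifting the exponent, contradicting $k>\ord_2(p-1)$; so $r=2$ and $n=p^aq^b$ with $\min(k-1,\ord_2(p-1))=\min(k-1,\ord_2(q-1))=1$, and one then checks that both $p,q\equiv 3\bmod 4$ (a factor $\equiv 1\bmod 4$ would make the exponent of the $\equiv 3\bmod 4$ prime even, forcing $\delta=1$) and that $\ord_p(n),\ord_q(n)$ are odd, which with $\ord_p(n)=\ord_q(n)=1$ is case~(2).

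The main obstacle is exactly this $n\equiv 1\bmod 4$ analysis. It is all elementary but needs careful $2$-adic bookkeeping — comparing $\ord_2(p^a-1)$ with $\ord_2(n-1)$ via lifting the exponent, tracking how the multiplicities $\ord_p(n)$ enter $\delta(n)$, and seeing how $n\equiv 1\bmod 4$ pins down those multiplicities — and above all one must be scrupulous that the list of $n$ with $\delta(n)\,2^{E}=2$ is genuinely exhausted. That verification is the part I would write out in full; the rest is routine.
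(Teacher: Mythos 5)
Your overall route is essentially the paper's: both arguments run the classification through Monier's Euler-liar formula, and your extra reduction through $|S(n)|=2$ and Proposition~\ref{prop:n/p}, plus the rewriting $e(n)=2^{E}$ with $E=\sum_{p\mid n}\min(k-1,\ord_2(p-1))$, is a legitimate repackaging of the paper's case analysis of $\delta(n)e(n)=2$. The problem is that your two hard branches are never actually closed, and they cannot be. In the branch $(\delta,E)=(1,1)$ you correctly derive $n=p^a$ with $p'=1$, $p\equiv 3\bmod 4$ (so $p=3$) and $\ord_p(n)$ even, but you neither rule this out nor put it into the classification --- and it is not empty: for $n=81$ the unit group is cyclic of order $54$, $(a\mid 81)=1$, and $(n-1)/2=40$, so $|E(81)|=\gcd(40,54)=2$, yet $81\equiv 1\bmod 4$ and $81$ is not a product of two primes. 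In the branch $(\delta,E)=(1/2,2)$ your constraints only force $n=p^aq^b$ with $p,q\equiv 3\bmod 4$ and $a,b$ both odd; nothing justifies the final leap ``with $\ord_p(n)=\ord_q(n)=1$.'' Indeed $n=189=3^3\cdot 7$ satisfies $189\equiv 1\bmod 4$, $\gcd(p',n')=\gcd(q',n')=1$, and a direct CRT computation gives $E(189)=\{\pm 1\}$, so $|E(189)|=2$ although $189$ is not $pq$.

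So the gap in your proposal sits exactly where the paper's own proof is defective: the paper asserts that $e(n)=2$ is impossible (false for $n=3^{2m}$) and that $e(n)=4$ forces $n$ to be a product of two distinct primes to the first power (false for $n=p^aq^b$ with $a,b$ odd, as with $189$), since $e(n)$ only sees the distinct prime divisors, not their multiplicities. Your bookkeeping, carried out honestly, shows the correct statement of the $n\equiv 1\bmod 4$ case is: $n=3^{2m}$, or $n=p^aq^b$ with $p,q\equiv 3\bmod 4$ distinct, $a,b$ odd, and $\gcd(p',n')\cdot\gcd(q',n')=1$. (This does not disturb Theorem~\ref{thm:twoeulerliars}, since the extra families are $O(\sqrt{x})$ in number, but it does mean your sketch as written does not prove the proposition, and no completion of it can, because the proposition as stated is false.)
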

\begin{proof}
First suppose that $\prod_{p \mid n} \gcd(p', n') = 1$.  If $n \equiv 3 \mod{4}$
then $\gcd(\frac{n-1}{2}, p-1) = 1$ for all $p \mid n$.  Additionally, $\delta(n) = 2$
since $\ord_2(p-1)$ cannot be any smaller.  If instead $n=pq$ with $p, q \equiv 3 \mod{4}$, 
then $\ord_2(n-1) = 2$ and so $\delta(n) = 1/2$, while 
$\prod_{p \mid n} \gcd(\frac{n-1}{2}, p-1) = 4$.

Now suppose that $|E(n)| = 2$.  It is impossible to have 
$e(n) = 2$.  For if $n \equiv 3 \mod{4}$ then the product 
will be odd, while if $n \equiv 1 \mod{4}$, $n$ odd means the product will be divisible by 
at least one factor of $2$ for each prime factor of $n$.  Thus the only two possibilities 
are 1) $\delta(n) = 2$ and $e(n) = 1$ and 2) $\delta(n) = 1/2$ and $e(n) = 4$.

In case 1), $e(n)=1$ implies $n \equiv 3 \mod{4}$ and $\prod_{p \mid n} \gcd(n',p')=1$, 
since otherwise $e(n)$ would be larger.  With $n \equiv 3 \mod{4}$, it must be divisible 
by a prime congruent to $3$ modulo $4$, and so it follows that $\delta(n) = 2$.

In case 2), $\delta(n) = 1/2$ implies $n \equiv 1 \mod{4}$ and divisible by a prime 
congruent to $3$ modulo $4$.  Then
$e(n)=4$ implies $\prod_{p \mid n} \gcd(n',p')=1$ and $n$ is the product 
of two distinct prime factors, for otherwise the power of $2$ dividing  $e(n)$ would 
be greater.
\end{proof}

% new section
\section{Preliminaries}

The proofs of our asymptotic formulas
will utilize a number of results from analytic number theory.
Our goal is to craft an account that is readable and self-contained, and 
hence will not necessarily include best-possible results.

One tool will be counts of primes in arithmetic progressions.
The classic result is the prime number theorem for arithmetic progressions.

\begin{lemma} \label{PNTprogs}
If $\gcd(d,a)=1$, let $\pi(x,d,a)$ denote the number of primes
$\le x$ that are congruent to $a$ mod $d$.  Then
$$
\pi(x,d,a) = (1 + o_d(1)) \frac {x} {\varphi(d)\log x}.
$$
\end{lemma}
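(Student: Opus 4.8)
The plan is to deduce this from the classical theory of Dirichlet $L$-functions; since the excerpt explicitly tolerates non-optimal, self-contained arguments I would in fact be content to cite a standard source (for instance Davenport, \emph{Multiplicative Number Theory}, Ch.~20), but the shape of the argument is as follows. Fix the modulus $d$ and let $\chi$ run over the $\varphi(d)$ Dirichlet characters mod~$d$. Orthogonality of characters expresses the indicator of $n\equiv a\bmod d$ (for $\gcd(a,d)=1$) as $\frac1{\varphi(d)}\sum_\chi\bar\chi(a)\chi(n)$, so with $\psi(x,\chi):=\sum_{n\le x}\chi(n)\Lambda(n)$ we obtain
$$
\psi(x,d,a):=\sum_{\substack{n\le x\\ n\equiv a\bmod d}}\Lambda(n)=\frac1{\varphi(d)}\sum_\chi\bar\chi(a)\,\psi(x,\chi).
$$
The principal character contributes $\frac1{\varphi(d)}\psi(x,\chi_0)=\frac1{\varphi(d)}\bigl(\psi(x)+O_d(\log^2x)\bigr)$, and the ordinary prime number theorem gives $\psi(x)\sim x$; so everything comes down to showing $\psi(x,\chi)=o_d(x)$ for every non-principal $\chi$.

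For this I would invoke a Tauberian theorem: Wiener--Ikehara applied to $-L'/L(s,\chi)=\sum_{n}\chi(n)\Lambda(n)n^{-s}$, which is holomorphic on $\Re s\ge1$ except at zeros of $L(s,\chi)$, reduces the claim to the assertion that $L(s,\chi)$ has no zero on the line $\Re s=1$. For $s=1+it$ with $t\ne0$ this is the same $3$--$4$--$1$ positivity trick used for $\zeta$: from $3+4\cos\theta+\cos2\theta\ge0$ one derives $\zeta(\sigma)^3|L(\sigma+it,\chi)|^4|L(\sigma+2it,\chi^2)|\ge1$ for $\sigma>1$, which precludes a zero at $1+it$. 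The one point needing a separate argument is $s=1$ with $\chi$ real (there $L(\sigma+2it,\chi^2)=L(\sigma,\chi_0)$ has a pole and the trick degenerates): this is Dirichlet's classical $L(1,\chi)\ne0$, which I would get from the class number formula, or from the short positivity argument on the Dirichlet series $\sum_n\bigl(\sum_{e\mid n}\chi(e)\bigr)n^{-s}$, whose coefficients are nonnegative and are $\ge1$ at perfect squares. Granting all this, $\psi(x,d,a)=\frac{x}{\varphi(d)}+o_d(x)$.

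Finally I would pass from $\psi$ to $\pi$. Prime powers $p^k$ with $k\ge2$ and $p^k\le x$ contribute only $O(\sqrt x\log x)$ to $\psi(x,d,a)$, so $\vartheta(x,d,a):=\sum_{p\le x,\,p\equiv a\bmod d}\log p=\frac{x}{\varphi(d)}+o_d(x)$, and then the partial summation $\pi(x,d,a)=\frac{\vartheta(x,d,a)}{\log x}+\int_2^x\frac{\vartheta(t,d,a)}{t\log^2t}\,dt$ gives $\pi(x,d,a)=(1+o_d(1))\frac{x}{\varphi(d)\log x}$. The genuine obstacle in this route is the analytic input at the start of the second paragraph---non-vanishing of $L(s,\chi)$ on all of $\Re s=1$, and in particular $L(1,\chi)\ne0$ for real characters; once that is available the remainder is routine bookkeeping with characters and a black-box Tauberian theorem, and in particular no zero-free region or effective error term is required since the lemma only claims an $o_d(1)$.
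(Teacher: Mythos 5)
The paper gives no proof of this lemma: it is quoted as the classical prime number theorem for arithmetic progressions and used as a black box, so there is nothing internal to compare your argument against. Your sketch is the standard $L$-function proof and is essentially correct: character orthogonality to isolate the progression, the principal character supplying the main term $x/\varphi(d)$ via the ordinary PNT, non-vanishing of $L(s,\chi)$ on $\Re s=1$ (with Dirichlet's $L(1,\chi)\neq 0$ for real $\chi$ handled separately) to make the non-principal contributions $o_d(x)$, and partial summation to pass from $\psi$ to $\pi$. Two small technical remarks. First, Wiener--Ikehara in its usual form requires nonnegative coefficients, which $\chi(n)\Lambda(n)$ does not have for non-principal $\chi$; either apply the Tauberian theorem directly to the combined series $\sum_{n\equiv a\ (d)}\Lambda(n)n^{-s}=-\frac{1}{\varphi(d)}\sum_\chi\bar\chi(a)\,\frac{L'}{L}(s,\chi)$, whose coefficients are nonnegative and which, once the non-vanishing is known, is holomorphic on $\Re s\ge 1$ apart from a simple pole at $s=1$ of residue $1/\varphi(d)$, or invoke a Tauberian theorem that does not need positivity (e.g. Newman's analytic theorem). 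Second, the $3$--$4$--$1$ inequality is usually stated with $L(\sigma,\chi_0)^3$ rather than $\zeta(\sigma)^3$, but since $0<L(\sigma,\chi_0)\le\zeta(\sigma)$ for $\sigma>1$ your form follows from the standard one. With these adjustments the argument proves exactly the non-uniform $o_d(1)$ statement the paper needs, and, as you note, no zero-free region or effective error term is required.
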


Next we have a version of the Brun-Titchmarsh inequality
from Montgomery and Vaughan \cite{MonVau73}.
Note that the constant is absolute for arbitrary $d$ smaller than $x$.

% \proclaim{Lemma 0.}
\begin{lemma} \label{bruntitch}
For $x > d \ge 1$, we have
$$
\pi(x,d,a) < \frac{2x} {\varphi(d) \log (x/d)} \enspace .
$$
\end{lemma}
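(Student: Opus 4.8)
This is a classical inequality; since the excerpt imports it from \cite{MonVau73}, here is how I would reconstruct a proof. The plan is to deduce it from the large sieve. First I would recall the analytic large sieve inequality: for complex coefficients $(c_m)$ supported on $N$ consecutive integers and any $R \ge 1$,
$$
\sum_{r \le R}\ \sum_{\substack{b \bmod r \\ \gcd(b,r)=1}} \Bigl| \sum_m c_m\, e(bm/r) \Bigr|^2 \ \le \ (N + R^2) \sum_m |c_m|^2 .
$$
From this one extracts the standard sifted-set consequence: if $\mathcal{S}$ is a set of integers lying in an interval of length $N$ whose reductions modulo $\ell$ omit at least one residue class for every prime $\ell \le R$, then
$$
|\mathcal{S}| \ \le \ \frac{N + R^2}{\displaystyle\sum_{r \le R} \mu^2(r) \prod_{\ell \mid r} \frac{1}{\ell-1}} \ = \ \frac{N + R^2}{\displaystyle\sum_{r \le R} \frac{\mu^2(r)}{\varphi(r)}} .
$$

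Second, I would apply this to the progression. We may assume $\gcd(a,d)=1$. Writing a prime $p \le x$ with $p \equiv a \bmod d$ as $p = a + dm$ puts the admissible $m$ in an interval of length $N = x/d$. For each prime $\ell \le R$ with $\ell \nmid d$, the condition $\ell \mid (a+dm)$ removes exactly one class of $m$ modulo $\ell$, and since $\gcd(a,d)=1$ no prime dividing $d$ divides $a+dm$; hence the set $\mathcal{S} = \{\, m : 0 \le m < x/d,\ \ell \nmid (a+dm) \text{ for all primes } \ell \le R \,\}$ satisfies the sifted-set hypothesis with sieving primes $\ell \nmid d$. Every prime $p = a+dm > R$ lies in $\mathcal{S}$, so
$$
\pi(x,d,a) \ \le \ R + \frac{x/d + R^2}{L(R)}, \qquad L(R) := \sum_{\substack{r \le R \\ \gcd(r,d)=1}} \frac{\mu^2(r)}{\varphi(r)} .
$$

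Third, I would lower bound $L(R)$. Writing $\mu^2(r)/\varphi(r) = \mu^2(r)\, r^{-1} \prod_{\ell \mid r}(1 + \ell^{-1} + \ell^{-2} + \cdots)$ for squarefree $r$ and expanding over radicals shows $L(R) \ge \sum_{m \le R,\, \gcd(m,d)=1} 1/m$, and a short Möbius computation over $\gcd(m,d)$ then gives the familiar bound $L(R) \ge (\varphi(d)/d)\log R$. Finally I would take $R$ just below $\sqrt{x/d}$, say $R = \sqrt{x/d}/\log(x/d)$, so that (in the main range $x/d \to \infty$) $R^2 = o(x/d)$, $\log R = (\tfrac12 + o(1))\log(x/d)$, and $R = o\bigl(x/(d\log x)\bigr)$; substituting yields $\pi(x,d,a) \le (2 + o(1))\, x/(\varphi(d)\log(x/d))$. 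A Selberg $\Lambda^2$ sieve on the same $m$-interval is an alternative route to the same bound.

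The main obstacle is upgrading this ``$2+o(1)$'' to the bare constant $2$ with a strict inequality, uniformly for all $x > d \ge 1$ — including the awkward regime where $d$ is close to $x$ and the sieve interval is short. That sharpening is exactly the substance of the Montgomery--Vaughan theorem: it requires their optimized form of the large sieve, careful bookkeeping of the coprimality-to-$d$ restriction in $L(R)$ and of the contribution of the primes $\le R$, and a separate treatment of the boundary ranges of $d$. Since the excerpt uses this inequality only as a black-box input, at that stage I would simply defer to \cite{MonVau73} rather than reproduce the final optimization.
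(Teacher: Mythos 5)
The paper offers no proof of this lemma at all: it is quoted verbatim from Montgomery and Vaughan \cite{MonVau73} and used as a black box, so there is no internal argument to compare yours against. Your large-sieve sketch is a sound reconstruction of the standard route — sift the progression $p = a + dm$ over an interval of length $x/d$ by primes $\ell \le R$ coprime to $d$, lower-bound $\sum_{r\le R,\,(r,d)=1}\mu^2(r)/\varphi(r)$ by $(\varphi(d)/d)\log R$, and choose $R$ near $\sqrt{x/d}$ — and you are right that this only yields $(2+o(1))\,x/(\varphi(d)\log(x/d))$ as $x/d \to \infty$, with the clean constant $2$, strict and uniform for all $x > d \ge 1$, being exactly the Montgomery--Vaughan refinement. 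One point worth flagging: the uniformity is not a cosmetic nicety for this paper. In the proof of Lemma \ref{reciprocalsums}(3) the bound is applied to $\pi(t,d,1)$ with $t$ ranging down to $2d$, where $t/d$ is bounded and your $o(1)$ term is not negligible; similarly Lemma \ref{SBbound} leans on an absolute constant after artificially inflating $x$. So your sketch alone would not support those applications, but since you explicitly defer to \cite{MonVau73} for the uniform sharp form — which is precisely what the authors do — your treatment matches the paper's, and there is no gap relative to how the lemma is actually used.
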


The Siegel-Walfisz theorem \cite{Wal36}
gives an absolute lower bound, but the range of possible 
$d$ is much smaller.  

% another lemma
\begin{lemma} \label{siewal}
Assume $d \leq \log{x}$.  Then
$$
\pi(x,d,a) = (1 + o(1))\frac{x}{ \varphi(d) \log{x}} \enspace .
$$
\end{lemma}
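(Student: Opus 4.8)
The plan is to deduce this from the standard Siegel--Walfisz theorem, in the sharper form usually attributed to Walfisz \cite{Wal36}: for every fixed $A > 0$ there is a constant $c_A > 0$ such that, uniformly for $d \le (\log x)^A$ and $\gcd(a,d) = 1$,
$$
\theta(x;d,a) := \sum_{\substack{p \le x \\ p \equiv a \, (d)}} \log p \; = \; \frac{x}{\varphi(d)} + O_A\!\left( x \exp\!\left(-c_A \sqrt{\log x}\,\right) \right).
$$
Specializing to $A = 1$ matches the hypothesis $d \le \log x$. First I would convert this estimate for $\theta$ into one for $\pi(x,d,a)$ by partial summation, obtaining $\pi(x,d,a) = \varphi(d)^{-1}\,\mathrm{li}(x) + O\!\big(x\exp(-c\sqrt{\log x})\big)$, where $\mathrm{li}(x) = \int_2^x \mathrm{d}t/\log t$; the error term keeps this shape (after possibly shrinking $c$), because partial summation against an error that already beats every fixed power of $\log x$ introduces only lower-order losses. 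Using $\mathrm{li}(x) = (1 + O(1/\log x))\,x/\log x$, this becomes
$$
\pi(x,d,a) = \frac{x}{\varphi(d)\log x}\left(1 + O\!\left(\tfrac{1}{\log x}\right)\right) + O\!\left( x \exp\!\left(-c\sqrt{\log x}\,\right) \right).
$$

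The one thing that actually needs checking is that the final error is $o(1)$ \emph{relative to} the main term, \emph{uniformly} over the whole range $d \le \log x$ --- this is precisely the feature that distinguishes Lemma \ref{siewal} from Lemma \ref{PNTprogs}, where the error is only $o_d(1)$. Here $\varphi(d) \le d \le \log x$, so the main term is $\ge x/(\log x)^2$, whereas $x\exp(-c\sqrt{\log x}) = o\big(x/(\log x)^2\big)$ since $\exp(-c\sqrt{\log x})$ decays faster than any fixed power of $1/\log x$. Dividing through by the main term therefore yields $\pi(x,d,a) = (1 + o(1))\, x/(\varphi(d)\log x)$ with the $o(1)$ independent of $d$, as claimed.

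If one wanted a more self-contained account avoiding Siegel's ineffective theorem on exceptional zeros, the argument would instead run through the explicit formula for $\theta(x;d,a)$, the classical effective zero-free region $L(s,\chi) \ne 0$ for $\sigma > 1 - c/\log(d(|t|+2))$, and Page's effective bound $1 - \beta \gg 1/(\sqrt d\,\log^2 d)$ for a possible Siegel zero $\beta$ of a real character mod $d$. In the restricted range $d \le \log x$ the exceptional-zero term is $\ll x^\beta/\varphi(d)$ with $x^{1-\beta} \ge \exp\!\big(c\sqrt{\log x}/(\log\log x)^2\big)$, which again outgrows every power of $\log x$, so the main term $x/(\varphi(d)\log x)$ dominates everything else. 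Either way, the substantive input is the same: one needs an error term that saves more than a power of $\log x$, and that is exactly what the zero-free region (effective, or via Siegel) buys in this range of $d$; the remainder is routine bookkeeping, which I expect to be the only place requiring care --- namely ensuring every implied constant is absolute over $d \le \log x$.
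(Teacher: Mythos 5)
Your derivation is correct: the paper states this lemma without proof, simply citing Walfisz \cite{Wal36}, and your argument is exactly the routine deduction from the standard Siegel--Walfisz bound $\theta(x;d,a)=x/\varphi(d)+O_A\bigl(x\exp(-c_A\sqrt{\log x})\bigr)$, including the one point that matters — that for $d\le\log x$ the main term is at least $x/(\log x)^2$ while the error beats every power of $\log x$, so the $o(1)$ is uniform in $d$. Nothing further is needed.
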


We will rely on a number of prime reciprocal sums.
The most basic is a result of Landau in \cite[v. 1, p. 197]{Lan09}.

% Landau lemma
\begin{lemma} \label{landau}
We have
$$
\sum_{p \leq x}\frac{1}{p} = \log\log{x} + A + O((\log{x})^{-1})
$$
where $A$ is an absolute constant.
\end{lemma}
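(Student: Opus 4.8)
The plan is to derive Mertens-type estimate for $\sum_{p \le x} 1/p$ from a more accessible ingredient, namely the asymptotics of $\sum_{p \le x} (\log p)/p$, which is itself a routine consequence of Mertens' first theorem $\sum_{p \le x} (\log p)/p = \log x + O(1)$, or equivalently of the Chebyshev bound $\psi(x) = O(x)$ together with the prime number theorem's weaker predecessors. First I would write $f(t) = \sum_{p \le t}(\log p)/p$ and record that $f(t) = \log t + r(t)$ where $r(t) = O(1)$; in fact for the error term $O((\log x)^{-1})$ one needs the sharper statement that $r(t)$ converges to a constant, i.e.\ $r(t) = c + O(1/\log t)$, which follows from partial summation against the prime number theorem with classical error term, or can be quoted directly from Landau.

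The key step is Abel summation: writing $1/p = \big((\log p)/p\big)\cdot(1/\log p)$, one has
$$
\sum_{p \le x} \frac{1}{p} = \frac{f(x)}{\log x} + \int_2^x \frac{f(t)}{t (\log t)^2}\, dt.
$$
Substituting $f(t) = \log t + r(t)$, the main term $f(x)/\log x$ contributes $1 + O(1/\log x)$, and the integral splits as $\int_2^x \frac{dt}{t\log t} + \int_2^x \frac{r(t)}{t(\log t)^2}\, dt = \log\log x - \log\log 2 + \int_2^\infty \frac{r(t)}{t(\log t)^2}\, dt + O(1/\log x)$, where the tail $\int_x^\infty \frac{r(t)}{t(\log t)^2}\,dt = O(1/\log x)$ since $r$ is bounded and $\int_x^\infty \frac{dt}{t(\log t)^2} = 1/\log x$. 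Collecting the constants into a single absolute constant $A$ yields the claimed $\log\log x + A + O((\log x)^{-1})$.

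The main obstacle is not the partial-summation bookkeeping, which is entirely mechanical, but securing the input with the right error term: to get $O((\log x)^{-1})$ rather than merely $O(\log\log x / \log x)$ one must know that $\sum_{p\le t}(\log p)/p - \log t$ tends to a limit with an $O(1/\log t)$ rate, and the cleanest route is simply to cite Landau's treatise as the statement does. An alternative is to avoid $f(t)$ altogether and apply Abel summation directly to $\pi(t)$ against $1/t$ using $\pi(t) = \mathrm{li}(t) + O(t/(\log t)^2)$; this is self-contained given the prime number theorem but pushes more of the error analysis onto the integral $\int_2^x \pi(t)\,d(1/t)$. Either way, since the paper explicitly permits citing results and aims for readability over best-possible constants, I would quote Landau for the lemma as stated and reserve the partial-summation derivation as the conceptual explanation.
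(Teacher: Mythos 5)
The paper offers no proof of Lemma \ref{landau} at all: it is stated purely as a quotation of Landau's treatise \cite{Lan09}, so your decision to cite Landau for the lemma as stated matches the paper exactly, and your Abel-summation derivation is extra content the paper does not supply. That derivation is correct and standard: writing $f(t)=\sum_{p\le t}(\log p)/p=\log t+r(t)$, partial summation gives $\sum_{p\le x}1/p=f(x)/\log x+\int_2^x f(t)\,\mathrm{d}t/(t(\log t)^2)$, and splitting off $\int_2^x \mathrm{d}t/(t\log t)=\log\log x-\log\log 2$ together with the absolutely convergent integral of $r(t)/(t(\log t)^2)$ yields the claim. One correction to your commentary, though: you do \emph{not} need the sharper input $r(t)=c+O(1/\log t)$ (i.e.\ prime-number-theorem-strength information) to reach the error term $O(1/\log x)$; mere boundedness of $r$, which is Mertens' first theorem, already suffices, exactly as your own bookkeeping shows --- the boundary term contributes $1+r(x)/\log x=1+O(1/\log x)$ and the tail $\int_x^\infty r(t)\,\mathrm{d}t/(t(\log t)^2)=O(1/\log x)$ uses only $|r(t)|=O(1)$, with the constant $A$ absorbing $1-\log\log 2$ and the convergent integral. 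So your argument is in fact more elementary than you claim, requiring only Chebyshev--Mertens input, whereas the paper simply buys brevity by citing Landau directly; either route is acceptable here.
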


Bounds on $\pi(x,d,a)$ lead to asymptotic formulas for prime reciprocal 
sums over arithmetic progressions.  It is doubtful the following lemma 
is new, but a good reference is elusive.

% important lemma on prime reciprocal sums
\begin{lemma} \label{reciprocalsums}
Let $P(x,d)$ be the prime reciprocal sum over a particular arithmetic progression.  That is, 
$$
P(x,d) = \sum_{\substack{p \leq x \\ p \equiv 1 (d)}} \frac{1}{p}
$$
where the sum is over primes.  Then 
\begin{enumerate}
\item for $1 < d \leq x$ we have $P(x,d) = (1 + o_d(1)) (\log\log{x})/\varphi(d)$,
\item for $1 < d \leq \log{x}$ we have $P(x,d) = (1 + o(1)) (\log\log{x})/\varphi(d)$, 
\item for $1 < d \leq \sqrt{x}$ we have $P(x,d) < 2 \varphi(d)^{-1}(\log\log{x} + O(1))$.
\end{enumerate}
\end{lemma}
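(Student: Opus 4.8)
The plan is to prove all three parts by Abel summation against the prime-counting functions $\pi(t,d,1)$. Since $\pi(t,d,1)=0$ for $t\le d$, partial summation gives
$$
P(x,d)=\sum_{\substack{p\le x\\ p\equiv 1\,(d)}}\frac1p=\frac{\pi(x,d,1)}{x}+\int_{d}^{x}\frac{\pi(t,d,1)}{t^2}\,{\rm d}t ,
$$
in which the boundary term is $<1/d\le 1/\varphi(d)$ by the trivial bound $\pi(x,d,1)<x/d$, hence negligible against the target $(\log\log x)/\varphi(d)$ in every regime; so everything reduces to the integral, into which I feed the three successively more uniform inputs --- Lemma~\ref{PNTprogs} for part~(1), Lemma~\ref{siewal} for part~(2), Lemma~\ref{bruntitch} for part~(3) --- reading off the $\log\log x$ from the antiderivative $\int\frac{{\rm d}t}{t\log t}=\log\log t$.

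Part~(1) is routine since $d$ is fixed: given $\epsilon>0$, Lemma~\ref{PNTprogs} supplies $T_0=T_0(d,\epsilon)$ beyond which $\pi(t,d,1)=(1+O(\epsilon))\,t/(\varphi(d)\log t)$; splitting the integral at $T_0$, the part over $[d,T_0]$ is a constant $O_{d,\epsilon}(1)$ and the part over $[T_0,x]$ is $(1+O(\epsilon))(\log\log x)/\varphi(d)+O_{d,\epsilon}(1)$, so dividing by $(\log\log x)/\varphi(d)$ and letting $x\to\infty$ and then $\epsilon\to0$ makes the ratio tend to $1$. Part~(3) is the same computation carried through with the explicit constant of Brun-Titchmarsh: split the integral at $2d$ (the piece over $[d,2d]$ contributes only $O(1/\varphi(d))$), substitute $u=t/d$ on $[2d,x]$ so that $\int_{2d}^{x}\pi(t,d,1)\,t^{-2}\,{\rm d}t<\frac{2}{\varphi(d)}\int_{2}^{x/d}\frac{{\rm d}u}{u\log u}=\frac{2}{\varphi(d)}\big(\log\log(x/d)-\log\log 2\big)$, and apply $\log\log(x/d)\le\log\log x$; assembling the pieces gives $P(x,d)<\frac{2}{\varphi(d)}\log\log x+\frac{O(1)}{\varphi(d)}$, which is exactly the stated bound.

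Part~(2) is the delicate one, since the relative error must be $o(1)$ \emph{uniformly} over $2<d\le\log x$; a cut at $t=e^{d}$ will not do, because $\int_{e^{d}}^{x}\frac{{\rm d}t}{t\log t}=\log\log x-\log d$ is no longer $(1+o(1))\log\log x$ once $d$ is as large as a power of $\log x$. Instead, fix a small $\epsilon>0$ and split $P(x,d)$ at $y=\exp\!\big((\log x)^{\epsilon}\big)$, noting that $y\le x$ and $d\le\log x\le\sqrt{y}$ for $x$ large. By part~(3) applied at $y$, the short piece $\sum_{p\le y,\ p\equiv1\,(d)}1/p=P(y,d)$ is at most $\frac{2}{\varphi(d)}(\log\log y+O(1))=\frac{2\epsilon\log\log x}{\varphi(d)}+\frac{O(1)}{\varphi(d)}$, since $\log\log y=\epsilon\log\log x$. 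On the tail $[y,x]$, every $t\ge y$ has $d\le\log x\le(\log t)^{1/\epsilon}$, so Siegel-Walfisz in its standard form --- valid for moduli up to $(\log t)^{A}$ for any fixed $A$, Lemma~\ref{siewal} being the case $A=1$ --- gives $\pi(t,d,1)=(1+o(1))\,t/(\varphi(d)\log t)$ there, whence
$$
\int_{y}^{x}\frac{\pi(t,d,1)}{t^{2}}\,{\rm d}t=\frac{1+o(1)}{\varphi(d)}\big(\log\log x-\log\log y\big)=\frac{(1+o(1))(1-\epsilon)}{\varphi(d)}\log\log x .
$$
Combining the two pieces and the negligible boundary terms traps $P(x,d)\,\varphi(d)/\log\log x$ between $1-\epsilon-o(1)$ and $1+\epsilon+o(1)$, and letting $\epsilon\to0$ finishes the proof. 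I expect the only genuine obstacle to be exactly this uniformity bookkeeping --- taking the cut $y$ large enough that the tail carries essentially all of the $\log\log x$ while keeping it small enough that Siegel-Walfisz still controls every modulus $d\le\log x$ at every scale $t\in[y,x]$ --- rather than any single estimate.
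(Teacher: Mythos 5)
Your proposal is correct, and for parts (1) and (3) it is essentially the paper's own argument: the same partial-summation identity, the same direct substitution of Lemma~\ref{PNTprogs} for part (1), and the same Brun--Titchmarsh computation with the cut at $2d$ and the substitution $u=t/d$ for part (3) (the paper handles the interval $[d,2d]$ by noting it costs at most the single term $1/(d+1)$, while you bound the integral over $[d,2d]$ directly; this is cosmetic). Where you genuinely diverge is part (2). The paper disposes of it in one sentence --- ``apply Lemma~\ref{siewal} instead of Lemma~\ref{PNTprogs} in the same integral'' --- which implicitly uses the estimate $\pi(t,d,1)\sim t/(\varphi(d)\log t)$ at every point $t$ of the integration range, even though Lemma~\ref{siewal} as stated requires $d\le\log t$, a condition that fails for $t<e^{d}$; when $d$ is near $\log x$ this excluded range is nearly the whole interval, and a crude Brun--Titchmarsh bound on it is of the same order as the main term, so the paper's proof of the uniform statement is really only a sketch. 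Your treatment fixes exactly this: cutting at $y=\exp((\log x)^{\epsilon})$, controlling the head by part (3) (contributing $\le 2\epsilon\log\log x/\varphi(d)+O(1/\varphi(d))$), and applying Siegel--Walfisz on the tail where $d\le(\log t)^{1/\epsilon}$, which does give uniformity over all $d\le\log x$. The price is that you invoke the standard Siegel--Walfisz theorem for moduli up to an arbitrary fixed power of $\log t$, which is stronger than Lemma~\ref{siewal} as literally stated (though it is what the cited theorem provides); the payoff is an actual proof of the claimed uniformity, whereas the paper asserts it. It is worth noting that for the paper's downstream use of part (2) (Lemma~\ref{Ar_sums}, where $d<(\log\log x)^{1-\epsilon}$) the weaker range would suffice, so the paper's applications are unaffected, but as a proof of the lemma as stated your version is the more complete one.
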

\begin{proof}
Replacing the sum by a Stieltjes integral and integrating
by parts, we get
\begin{equation}\label{intbyparts}
\sum_{\substack{p \le x \\ p \equiv 1 (d)}} \frac 1 p
=
\frac { \pi(t,d,1) } {t} \Big|_{3^-}^{x}
+
\int_{3}^{ x} \frac {\pi(t,d,1) dt} {t^2} \enspace .
\end{equation}
The first two cases are easier.  For general $d \leq x$ we
apply Lemma \ref{PNTprogs}  to get
$$
\frac{\pi(x,d,1)}{x} + \int_3^x (1 + o_d(1)) \frac{1}{\varphi(d)t \log{t}} \ {\rm d}t
= \frac{1+o_d(1)}{\varphi(d) \log{x}} + O(1) + \frac{1 + o_d(1)}{\varphi(d)} \log\log{x}
$$
which is equivalent to $(1 + o_d(1)) (\log\log{x})/\varphi(d)$.  If $d \leq \log{x}$
we apply Lemma \ref{siewal} to get the same result, except that the constant 
in the $o(1)$ does not depend on $d$.

For part 3), the first term of (\ref{intbyparts}) is 
$$
\frac { \pi(x,d,1) } {x}
\le
\frac {2  x} { x \varphi(d) \log(x/d)}
=
O \left( \frac 1 {\varphi(d)} \right),
$$
by Lemma \ref{bruntitch} and the assumption that $d \leq \sqrt{x}$.
We wish to push the lower bound of the integral to $2d$, which at worst costs
us one term of the sum, and only if $d+1$ is prime.
 Using Lemma \ref{bruntitch} again, the revised integral is bounded by
$$
\int_{2d}^{  xd} \frac {\pi(t,d,1) dt} {t^2}
\le
\int_{2d}^{xd} \frac {2 dt} {\varphi(d) t \log (t/d)}
=
2 \int_{2}^{ x} \frac {du} {\varphi(d) u \log u} \enspace .
$$
This is $2 \varphi(d)^{-1} (\log\log x + O(1))$.
The lost term of the sum makes no difference, since $1/p = 1/(d+1) \leq \varphi(d)^{-1}$.
\end{proof}

Next we give a brief introduction to sieve theory; interested readers are encouraged 
to peruse \cite{Greaves01} or \cite{Hoo76}.
Sieve theory is a collection of results for estimating the number of ``survivors" that remain 
after we start with an interval (or other large set) and remove elements that satisfy 
congruence conditions.  Typically, the exact formula for the number of survivors is of 
exponential complexity, and so one seeks approximations that are easier to 
evaluate but still reasonably accurate.

We use $S(x, \mathcal{P})$ to denote the count of integers up to $x$ 
coprime to the elements of $\mathcal{P}$, where $\mathcal{P}$ is a 
set of primes.  When $\mathcal{P}$ is the set of primes up to $z$ we instead 
use $S(x,z)$, and we replace $x$ with $\mathcal{X}$ when our base set is a subset 
of the integers up to $x$.  Our first sieve is
the Legendre sieve, an exercise in keeping track of the errors 
from the Sieve of Eratosthenes. 
Note that $2^{|\P|}$ is the error term.

% legendre theorem
\begin{theorem}[Legendre sieve] \label{legendre}
Let $\mathcal{P}$ be a set of primes.
Then
$$
S(x, \P) \leq x \prod_{p \in \P} \left( 1 - \frac{1}{p} \right) + 2^{|\P|} \enspace .
$$
\end{theorem}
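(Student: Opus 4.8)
The plan is to prove this by inclusion--exclusion (the Sieve of Eratosthenes--Legendre), tracking the rounding error at each step. Set $P = \prod_{p \in \P} p$, so that an integer $n \le x$ is counted by $S(x,\P)$ precisely when $\gcd(n,P) = 1$. Using the Möbius identity $\sum_{d \mid \gcd(n,P)} \mu(d)$, which equals $1$ when $\gcd(n,P)=1$ and $0$ otherwise, and interchanging the order of summation, one obtains
$$
S(x,\P) = \sum_{n \le x} \ \sum_{d \mid \gcd(n,P)} \mu(d) = \sum_{d \mid P} \mu(d) \left\lfloor \frac{x}{d} \right\rfloor,
$$
where the outer sum runs over the divisors $d$ of $P$. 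Since $P$ is squarefree, these are exactly the products of subsets of $\P$, so there are $2^{|\P|}$ of them.

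Next I would write $\lfloor x/d \rfloor = x/d - \{x/d\}$ with $0 \le \{x/d\} < 1$ and split the sum accordingly. The main term is
$$
x \sum_{d \mid P} \frac{\mu(d)}{d} = x \prod_{p \in \P} \left( 1 - \frac{1}{p} \right),
$$
the equality being the factorization of this finite sum over squarefree divisors of $P$ into an Euler product (multiplicativity of $d \mapsto \mu(d)/d$ together with squarefreeness of $P$). It then remains to control the error term $-\sum_{d \mid P} \mu(d) \{x/d\}$. Since $|\mu(d)| \le 1$ and each fractional part lies in $[0,1)$, this is at most $\sum_{d \mid P} 1 = 2^{|\P|}$ in absolute value, which yields the claimed inequality. (In fact only the divisors $d$ with $\mu(d) = -1$ contribute positively, so one could sharpen the error to $2^{|\P|-1}$ when $\P \ne \emptyset$, but the stated bound suffices.)

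There is no genuine obstacle here: everything in sight is a finite sum, so the interchange of summation in the first display is unproblematic, and the identification of the main term with the Euler product is routine. If anything, the only place worth a sentence of care is making explicit that the divisors of the squarefree number $P$ are in bijection with subsets of $\P$, which is what pins down both the product form of the main term and the count $2^{|\P|}$ in the error.
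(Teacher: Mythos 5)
Your proof is correct and follows essentially the same route as the paper: inclusion--exclusion (Möbius) gives $S(x,\P)=\sum_{d\mid P}\mu(d)\lfloor x/d\rfloor$, the main term factors into the Euler product, and the fractional-part error is bounded by the number of divisors of $P$, namely $2^{|\P|}$. Your write-up is simply a more detailed version of the paper's argument (and the aside about sharpening to $2^{|\P|-1}$ is a harmless bonus).
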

\begin{proof}
Let $P$ be the product of all primes in $\P$, and let $d$ be an arbitrary divisor.
Using inclusion-exclusion we obtain
$$
S(x, \P) = \sum_{d \mid P} \mu(d) \left \lfloor \frac{x}{d} \right \rfloor
\leq x \sum_{d \mid P} \frac{\mu(d)}{d} + 1
= x \prod_{ p \in \P} \left( 1 - \frac{1}{p} \right) + 2^{|\P|} \enspace .
$$
\end{proof}

% corollaries for the legendre sieve
\begin{corollary} \label{legendre_eq}
Let $\P$ be the set of primes up to $z$, where $z \leq \log{x}$.  Then 
$$
S(x, \P) = S(x,z) = (1 + o(1)) \frac{e^{-\gamma} x}{\log{z}}
$$
where $\gamma \approx 0.5772$ is Euler's constant.
\end{corollary}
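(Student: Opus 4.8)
The plan is to extract both an upper and a lower bound for $S(x,z)$ from the inclusion-exclusion identity underlying the Legendre sieve, and then to observe that for $z \le \log x$ the error term $2^{\pi(z)}$ is dwarfed by the main term.

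First I would revisit the proof of Theorem \ref{legendre} to get a two-sided estimate. Writing $P = \prod_{p \le z} p$ and $\lfloor x/d \rfloor = x/d - \{x/d\}$, inclusion-exclusion gives
$$
S(x,z) = \sum_{d \mid P} \mu(d)\left\lfloor \frac{x}{d} \right\rfloor = x\prod_{p \le z}\left(1 - \frac{1}{p}\right) - \sum_{d \mid P}\mu(d)\left\{\frac{x}{d}\right\},
$$
and since $P$ has exactly $2^{\pi(z)}$ divisors, the last sum is at most $2^{\pi(z)}$ in absolute value. Hence
$$
S(x,z) = x\prod_{p \le z}\left(1 - \frac{1}{p}\right) + O\!\left(2^{\pi(z)}\right).
$$

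Next I would control the two pieces separately. By Mertens' third theorem, $\prod_{p \le z}(1 - 1/p) = (1+o(1))\, e^{-\gamma}/\log z$ as $z \to \infty$, so the main term equals $(1+o(1))\, e^{-\gamma} x/\log z$, which is of order at least $x/\log\log x$ since $z \le \log x$. For the error term, Chebyshev's estimate $\pi(z) = O(z/\log z)$ combined with $z \le \log x$ yields $\pi(z) = O(\log x/\log\log x)$, and therefore
$$
2^{\pi(z)} = \exp\!\left(O\!\left(\frac{\log x}{\log\log x}\right)\right) = x^{o(1)}.
$$
Thus $2^{\pi(z)} = o(x/\log z)$, the error is absorbed into the $(1+o(1))$ factor, and the claimed asymptotic follows (the equality $S(x,\P)=S(x,z)$ being immediate from the definition, as $\P$ is precisely the set of primes up to $z$).

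There is no real obstacle here; the point to flag is simply that the statement is asymptotic in the regime $z \to \infty$ (the side constraint $z \le \log x$ forcing $x \to \infty$ as well). Mertens' theorem is what makes $\prod_{p\le z}(1-1/p)$ comparable to $e^{-\gamma}/\log z$, and the double-exponential gap between ``$z$ as large as $\log x$'' and ``$2^{\pi(z)}$'' is exactly what renders the Legendre sieve, crude as it is, sharp enough here — no deeper input, not even the prime number theorem, is needed.
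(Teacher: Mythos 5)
Your proof is correct and follows essentially the same route as the paper: the Legendre inclusion--exclusion identity, Mertens' theorem for the product $\prod_{p\le z}(1-1/p)$, and a Chebyshev-type bound $\pi(z)=O(z/\log z)$ with $z\le\log x$ to show $2^{\pi(z)}=x^{o(1)}=o\!\left(x/\log z\right)$. Your only (welcome) refinement is spelling out the two-sided estimate $S(x,z)=x\prod_{p\le z}(1-1/p)+O(2^{\pi(z)})$, which the paper's Theorem states only as an upper bound but clearly relies on implicitly when asserting the asymptotic equality.
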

\begin{proof}
The product term is given by Mertens' theorem \cite[Theorem 429]{HarWri79}.
For the error term note that for $z$ large enough
$$
2^{\pi(z)} \leq 2^{\frac{2z}{\log{z}}}
\leq 2^{ \frac{2\log{x}}{\log{z}}} = x^{\frac{2}{ (\log_2{e})(\log{z})}}
= o\left( \frac{x}{\log{z}} \right) \enspace .
$$
\end{proof}

Despite the logarithmic bound on $z$, Corollary \ref{legendre_eq}
will be strong enough to give the main term in Theorem \ref{thm:twostrongliars}.
If we are willing to settle for an upper bound, we can generalize 
the set of sieving primes.

% next corollary
\begin{corollary} \label{legendre_upper}
Let $\P$ be an arbitrary set of primes smaller than $\log{x}$.  Then
$$
S(x,\P) \leq (1+o(1)) \cdot x \cdot {\rm exp}\left(- \sum_{p \in \P} \frac{1}{p} \right) \enspace .
$$
\end{corollary}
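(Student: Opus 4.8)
The plan is to follow the same two-step pattern as in the proof of Corollary~\ref{legendre_eq}: bound the product term from above by an exponential, and then show the Legendre error term $2^{|\P|}$ is negligible. Starting from the Legendre sieve (Theorem~\ref{legendre}),
$$
S(x,\P) \leq x \prod_{p \in \P}\left(1 - \frac{1}{p}\right) + 2^{|\P|},
$$
I would apply the elementary inequality $1 - t \leq e^{-t}$ termwise to get $\prod_{p \in \P}(1 - 1/p) \leq {\rm exp}\bigl(-\sum_{p \in \P} 1/p\bigr)$. This already yields $S(x,\P) \leq x\,{\rm exp}\bigl(-\sum_{p\in\P}1/p\bigr) + 2^{|\P|}$, so everything reduces to absorbing $2^{|\P|}$ into a $(1+o(1))$ factor.

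For the error term I would use, exactly as in Corollary~\ref{legendre_eq}, that every prime in $\P$ lies below $\log x$, so $|\P| \leq \pi(\log x) \leq 2\log x/\log\log x$ for $x$ large, and hence $2^{|\P|} = x^{o(1)}$ uniformly over all admissible $\P$. The one point requiring care is to verify this is small compared with the main term. Here I would bound the reciprocal sum using Lemma~\ref{landau}: for any set $\P$ of primes below $\log x$,
$$
\sum_{p \in \P}\frac{1}{p} \leq \sum_{p \leq \log x}\frac{1}{p} = \log\log\log x + A + O\bigl((\log\log x)^{-1}\bigr),
$$
so that $x\,{\rm exp}\bigl(-\sum_{p\in\P}1/p\bigr) \geq (1+o(1))\,e^{-A} x/\log\log x$. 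Since $x^{o(1)} = o(x/\log\log x)$, the term $2^{|\P|}$ is $o(1)$ times the main term, uniformly in $\P$, and the stated bound follows.

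I do not anticipate a real obstacle here; the only subtlety is uniformity, since the sieving set $\P$ is allowed to depend on $x$. Both ingredients --- the bound $2^{|\P|} = x^{o(1)}$ and the lower bound $x\,{\rm exp}\bigl(-\sum_{p\in\P}1/p\bigr) \gg x/\log\log x$ --- hold with implied constants independent of the particular $\P$, because each follows from a statement about the full set of primes below $\log x$, so the $(1+o(1))$ in the conclusion is likewise uniform. One should also note the degenerate case $\P = \emptyset$, where the claim is the trivial $S(x,\P) \le x$, and this is consistent with the above.
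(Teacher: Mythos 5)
Your proposal is correct and follows essentially the same route as the paper: apply the Legendre sieve, use $1-t\le e^{-t}$ on the product, and absorb the error term $2^{|\P|}\le 2^{\pi(\log x)}=x^{o(1)}$ exactly as in Corollary~\ref{legendre_eq}. The only difference is that you spell out the absorption step (lower-bounding $x\,{\rm exp}(-\sum_{p\in\P}1/p)$ by $\gg x/\log\log x$ via Lemma~\ref{landau}) and the uniformity in $\P$, details the paper leaves implicit with ``dealt with in similar fashion.''
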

\begin{proof}
By convexity we have $\log(1 - 1/p) \leq -1/p$.  The error term is dealt with in similar 
fashion to Corollary \ref{legendre_eq}.
\end{proof}

The following application of the Legendre sieve will be used in the next section.

% least prime corollary
\begin{corollary}[\cite{Erd48}] \label{legendre_leastprime}
Let $p \to \infty$, $x \to \infty$ with $p + \log{p} \leq \log{x}$.
 Denote by $C_p(x)$ the number of integers $n \leq x$
for which the least prime factor of $n$ is $p$.  Then
$$
C_p(x) = (1 + o(1)) \frac{x e^{-\gamma}}{p \log{p}} \enspace .
$$
\end{corollary}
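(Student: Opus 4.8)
The plan is to express $C_p(x)$ as a two-stage count: first fix the least prime factor to be $p$, then sieve the remaining cofactor by all primes strictly less than $p$. Concretely, an integer $n \le x$ has least prime factor exactly $p$ if and only if $n = p m$ with $m \le x/p$ and $m$ coprime to every prime $q < p$. Thus $C_p(x) = S(x/p, \P_{<p})$, where $\P_{<p}$ denotes the set of primes below $p$. Since $p + \log p \le \log x$, we have $\log(x/p) \ge \log x - \log p \ge p$, so every sieving prime $q < p$ satisfies $q < \log(x/p)$, putting us exactly in the regime of Corollary \ref{legendre_eq} applied with sieving bound $z = p$ and base interval length $x/p$. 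That corollary gives
$$
C_p(x) = S(x/p, p) = (1 + o(1)) \frac{e^{-\gamma} (x/p)}{\log p},
$$
which is the claimed formula. The only point needing a moment's care is that Corollary \ref{legendre_eq} was stated with the hypothesis $z \le \log x$ on the \emph{same} $x$ that appears in $S(x,z)$; here the roles are played by $z = p$ and $x/p$, and the hypothesis $p \le \log(x/p)$ is precisely what $p + \log p \le \log x$ delivers, so the substitution is legitimate and the error term $2^{\pi(p)}$ is $o(x/(p \log p))$ by the same computation as in that corollary's proof.

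I would also briefly address the sieve-as-written/sieve-as-used mismatch: Corollary \ref{legendre_eq} produces an upper bound from the Legendre sieve plus a matching lower bound, and for the lower bound one keeps only the divisors $d$ of $\prod_{q<p} q$ with $d \le x/p$; since all such $d$ are themselves at most $\prod_{q<p} q = e^{(1+o(1))p} \le e^{(1+o(1))\log(x/p)}$, this is a mild constraint and the standard inclusion-exclusion lower bound $\sum_{d} \mu(d)\lfloor (x/p)/d\rfloor \ge (x/p)\prod_{q<p}(1-1/q) - 2^{\pi(p)}$ goes through, giving the two-sided estimate. In short, this corollary is essentially a repackaging of Corollary \ref{legendre_eq} with a change of variables, and the main (very small) obstacle is simply verifying that the hypothesis $p + \log p \le \log x$ is exactly the translate of the hypothesis $z \le \log x$ under that change of variables; there is no genuine analytic difficulty beyond what has already been established.
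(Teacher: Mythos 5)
Your proposal is correct and follows essentially the same route as the paper: characterize $C_p(x)$ as $S(x/p,p)$, observe that $p+\log p\le\log x$ gives $p\le\log(x/p)$, and invoke Corollary \ref{legendre_eq} with the interval length $x/p$ and sieving bound $p$. Your extra remarks (the strictly-less-than-$p$ versus up-to-$p$ sieving set and the two-sided inclusion--exclusion bound) are harmless refinements of points the paper also glosses over, and they do not change the argument.
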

\begin{proof}
Note the least prime factor of $n$ is $p$ if and only if $n/p$ is coprime to 
all primes smaller than $p$.  Since $p + \log{p} \leq \log{x}$ implies $p \leq \log{(x/p)}$, 
we apply Corollary \ref{legendre_eq} to obtain
$$
C_p(x) = S(x/p, p) = (1 + o(1)) \frac{e^{-\gamma} x}{p \log{p}} \enspace .
$$
\end{proof}

The Legendre sieve can also be extended to other initial sets.  For example, 
Let $\mathcal{X}$ be the set of integers $n \leq x$ that are congruent to $3$ modulo $4$.
Then by the Chinese Remainder Theorem, the size of the subset of $\mathcal{X}$
divisible by $d$ odd is $x/(4d) + O(1)$.
The subset is empty if $d$ is even.

% sieve theorem for Euler liars
\begin{theorem} \label{thm:eulersieve}
Assume that $z \leq \log{x}$, and let $\mathcal{P}$ be the set of odd primes up to $z$.
Then
$$
S(\mathcal{X}, \mathcal{P}) = (1 + o(1)) \frac{e^{-\gamma} x}{2 \log{z}} \enspace .
$$
\end{theorem}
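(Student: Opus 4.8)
The plan is to run the Legendre sieve on the set $\mathcal{X}$ exactly as in the proof of Corollary~\ref{legendre_eq}, keeping track of the extra density factor produced by the congruence $n \equiv 3 \bmod 4$. Let $P$ be the product of all primes in $\mathcal{P}$; since $\mathcal{P}$ contains only odd primes, every divisor $d \mid P$ is odd, and by the remark preceding the theorem the number of $n \in \mathcal{X}$ divisible by $d$ is $x/(4d) + O(1)$. Inclusion--exclusion then yields
$$
S(\mathcal{X}, \mathcal{P}) = \sum_{d \mid P} \mu(d)\left( \frac{x}{4d} + O(1) \right) = \frac{x}{4} \prod_{p \in \mathcal{P}} \left( 1 - \frac{1}{p} \right) + O\!\left( 2^{|\mathcal{P}|} \right) .
$$

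For the main term, I would use that $\mathcal{P}$ is precisely the set of odd primes up to $z$, so that
$$
\prod_{p \in \mathcal{P}} \left( 1 - \frac{1}{p} \right) = 2 \prod_{p \leq z} \left( 1 - \frac{1}{p} \right) = (1 + o(1)) \frac{2 e^{-\gamma}}{\log z}
$$
by Mertens' theorem \cite[Theorem 429]{HarWri79}. Hence the main term equals $(1 + o(1)) \, x e^{-\gamma} / (2 \log z)$, the factor $2$ gained by dropping the prime $2$ from the sieve cancelling half of the density loss from the residue class and leaving the stated $1/2$.

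The error term is disposed of verbatim as in Corollary~\ref{legendre_eq}: from $\pi(z) \leq 2z/\log z$ and the hypothesis $z \leq \log x$ we get $2^{|\mathcal{P}|} \leq 2^{\pi(z)} \leq x^{2/((\log_2 e)\log z)} = o(x/\log z)$, which is swallowed by the main term. I do not anticipate any real obstacle in this argument; the only point requiring a little care is the bookkeeping of the constant, ensuring that the $1/4$ coming from the arithmetic progression and the $2$ coming from the missing prime combine correctly.
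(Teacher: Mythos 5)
Your proof is correct and follows essentially the same route as the paper: a Legendre sieve over the odd divisors of $P$ with density $x/(4d)+O(1)$, the factor $2$ from omitting the prime $2$ recombined with Mertens' theorem, and the $2^{\pi(z)}$ error absorbed exactly as in Corollary~\ref{legendre_eq}. The constant bookkeeping ($\tfrac{x}{4}\cdot 2 = \tfrac{x}{2}$) matches the paper's computation.
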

\begin{proof}
Let $P$ be the product of all odd primes up to $z$.
Applying the Legendre sieve, we have 
$$
S(\mathcal{X}, \mathcal{P}) = \sum_{d \mid P} \mu(d) \frac{x}{4d} + O(1)
= \frac{x}{4} \prod_{p \mid P} \left( 1 - \frac{1}{p} \right) + O(2^{\pi(z)})
= \frac{x}{2} \prod_{p \leq z} \left(1 - \frac{1}{p} \right) + O(2^{\pi(z)})  \enspace .
$$
With $z \leq \log{x}$, Corollary \ref{legendre_eq} gives the result.
\end{proof}

For some results we will need a stronger sieve, i.e. one where $z$ can grow 
larger than $\log{x}$.  The following special case of the Brun sieve adapted from 
 \cite[Section 3.2.3]{Greaves01} will suffice.  For sifting density 
 we use the simpler characterization found in \cite[Section 1.3.5]{Greaves01}.

% Brun sieve
\begin{theorem}[Brun sieve] \label{brun_sieve}
Let $\mathcal{P}$ be a set of primes all less than $z$ and let $d$ be a divisor of 
$\prod_{p \in \mathcal{P}} p$.
Assume $\mathcal{P}$ has sifting density $\kappa > 0$, i.e. 
there is a constant $A > 1$ such that
$$
\sum_{w \leq p < z} \frac{\log{p}}{p-1} \leq \kappa \log{ \left( \frac{z}{w} \right)} + A
\mbox{ when  $2 \leq w < z$ and $p \in \mathcal{P}$ } \enspace .
$$
Then 
$$
S(x,\mathcal{P}) \sim x \prod_{p \in \mathcal{P}} \left(1 - \frac{1}{p} \right)
$$
as $x \to \infty$, uniformly in $z \leq x^{1/(c \kappa \log\log{x})}$, where $c$ is an absolute constant.
\end{theorem}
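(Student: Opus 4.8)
The plan is to prove this by Brun's pure sieve: truncate the Legendre (inclusion--exclusion) expansion at a carefully balanced level and separately control the resulting main-term tail and the accumulated remainder. Writing $P = \prod_{p \in \mathcal{P}} p$ and recalling from the proof of Theorem~\ref{legendre} that $S(x,\mathcal{P}) = \sum_{d \mid P} \mu(d)\lfloor x/d\rfloor$, I would first invoke the Bonferroni inequalities: for any even integer $t$,
$$
\sum_{\substack{d \mid P \\ \omega(d) \le t+1}} \mu(d)\left\lfloor \frac{x}{d}\right\rfloor \;\le\; S(x,\mathcal{P}) \;\le\; \sum_{\substack{d \mid P \\ \omega(d) \le t}} \mu(d)\left\lfloor \frac{x}{d}\right\rfloor,
$$
where $\omega(d)$ denotes the number of prime factors of $d$ and the even/odd truncation accounts for the upper/lower bound respectively. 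In each truncated sum I would replace $\lfloor x/d\rfloor$ by $x/d + O(1)$, separating a \emph{main term} $x\sum_{\omega(d)\le t}\mu(d)/d$ from a \emph{remainder} bounded by the number of surviving divisors.

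For the main term, the difference between the truncated sum and the full product is
$$
\left| \sum_{d\mid P}\frac{\mu(d)}{d} - \sum_{\substack{d\mid P\\ \omega(d)\le t}}\frac{\mu(d)}{d}\right| \;\le\; \sum_{k > t} \sum_{\substack{d\mid P\\ \omega(d)=k}}\frac 1 d \;\le\; \sum_{k>t}\frac{S^k}{k!},
$$
where $S := \sum_{p\in\mathcal{P}} 1/p$ and the inner bound is the standard estimate $e_k \le S^k/k!$ for elementary symmetric functions. Since $\sum_p 1/p^2$ converges, we have $\prod_{p\in\mathcal P}(1-1/p) \asymp e^{-S}$, so it suffices to make the exponential tail $\sum_{k>t} S^k/k!$ small compared with $e^{-S}$. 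Choosing $t = \lambda S$ for a sufficiently large absolute constant $\lambda$ (say $\lambda > e^2$), Stirling's estimate gives $\sum_{k>t} S^k/k! \ll (e/\lambda)^{t} = o(e^{-S})$, so the truncated main term is $(1+o(1))\,x\prod_{p\in\mathcal P}(1-1/p)$.

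The delicate step is the remainder. The number of divisors $d\mid P$ with $\omega(d)\le t$ is at most $z^t$, since each such $d$ is a distinct integer below $z^t$; hence the accumulated remainder is $O(z^t)$, and I must show $z^t = o(x\prod(1-1/p))$. Here the sifting density hypothesis enters: partial summation turns $\sum_{w\le p<z}\frac{\log p}{p-1}\le \kappa\log(z/w)+A$ into $S \le \kappa\log\log z + O(1)$ (compare Lemma~\ref{landau}), so $t = \lambda S \le \lambda\kappa\log\log z + O(1)$. Thus $\log(z^t) = t\log z \le \lambda\kappa\,\log\log z\,\log z + O(\log z)$, and using $\log\log z \le \log\log x$ together with the hypothesis $\log z \le \frac{\log x}{c\kappa\log\log x}$ this is at most $\frac{\lambda}{c}\log x + o(\log x)$. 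Taking the absolute constant $c$ large enough relative to $\lambda$ forces $z^t \le x^{1/2+o(1)}$, which is $o(x\prod(1-1/p))$ since the product is only polylogarithmically small. Combining the upper and lower Bonferroni bounds then yields $S(x,\mathcal P)\sim x\prod_{p\in\mathcal P}(1-1/p)$.

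I expect the main obstacle to be exactly this balancing of the truncation level $t$: it must be large enough ($t \gtrsim S \asymp \kappa\log\log z$) for the truncated product to approximate $\prod(1-1/p)$, yet small enough ($t\log z \lesssim \log x$) for the remainder $z^t$ to stay below $x$. These two requirements are simultaneously met precisely when $z$ lies in the stated range $z \le x^{1/(c\kappa\log\log x)}$, which is why the factor $\kappa$ appears in the exponent and $c$ must be chosen as an absolute constant depending only on $\lambda$.
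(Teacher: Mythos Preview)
The paper does not supply its own proof of Theorem~\ref{brun_sieve}; the statement is quoted as a special case of the Brun sieve from \cite[Section~3.2.3]{Greaves01}, with the sifting-density formulation taken from \cite[Section~1.3.5]{Greaves01}. So there is nothing to compare against in the paper itself.

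That said, your outline is exactly Brun's ``pure sieve'' argument and is the standard route to this result. The truncated Bonferroni inequalities, the bound $\sum_{\omega(d)=k} 1/d \le S^k/k!$, the choice $t \asymp \lambda S$, and the balancing of the tail $\sum_{k>t} S^k/k!$ against the remainder $z^t$ are all correct in substance. The deduction $S \le \kappa\log\log z + O(1)$ from the sifting-density hypothesis via partial summation is the intended use of that hypothesis, and your arithmetic showing $z^t \le x^{1/2+o(1)}$ under $\log z \le (\log x)/(c\kappa\log\log x)$ is right. One small point worth tightening if you write this out in full: you invoke $\sum_{k>t} S^k/k! = o(e^{-S})$ with $t = \lambda S$, which requires $S \to \infty$; when $z$ (hence $S$) stays bounded the statement follows directly from the Legendre sieve (Theorem~\ref{legendre}) with error $2^{|\mathcal P|} = O(1)$, so you should either treat that regime separately or take $t = \lceil \lambda S\rceil + g(x)$ for some slowly growing $g$.
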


As an application we give an upper bound on 
the count of $n \leq x$ divisible by only primes 
$\equiv 1 \bmod{4}$.
Despite being far from best-possible, it is adequate for our needs in a later proof.

% count of n divisible by at least one p equiv 3 mod 4
\begin{corollary}\label{equiv_1}
The count of $n \leq x$ divisible by only primes $\equiv 1 \bmod{4}$
is $o(x/\log\log{x})$.
\end{corollary}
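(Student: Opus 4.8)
The plan is to prove this as an upper bound via a sieve, exploiting the fact that an integer all of whose prime factors are $\equiv 1 \bmod 4$ is in particular coprime to $2$ and to every prime $\equiv 3 \bmod 4$. First I would fix a cutoff $z = z(x)$ and set $\mathcal{P} = \{2\} \cup \{p \le z : p \equiv 3 \bmod 4\}$. The integers to be counted all lie in the sifted set enumerated by $S(x,\mathcal{P})$, so it suffices to show $S(x,\mathcal{P}) = o(x/\log\log x)$ for a good choice of $z$.

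Next I would verify that $\mathcal{P}$ has sifting density $\kappa = 1/2$. By the prime number theorem for arithmetic progressions (Lemma \ref{PNTprogs}) together with partial summation, $\sum_{w \le p < z,\ p \equiv 3 (4)} \frac{\log p}{p} = \tfrac12 \log(z/w) + O(1)$; replacing $\frac{\log p}{p}$ by $\frac{\log p}{p-1}$ alters this by $O(1)$, and the single prime $2$ contributes $O(1)$ as well. Hence the hypothesis of the Brun sieve (Theorem \ref{brun_sieve}) holds with $\kappa = 1/2$, so that theorem applies uniformly for $z \le x^{1/(c\kappa \log\log x)} = x^{2/(c\log\log x)}$, where $c$ is the absolute constant there.

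Then I would take $z = x^{2/(c\log\log x)}$ (or any slightly smaller power of $x$), so that $\log z = \tfrac{2\log x}{c\log\log x}$ and hence $\log\log z = (1+o(1))\log\log x$. The Brun sieve gives
$$
S(x,\mathcal{P}) \sim x \prod_{p \in \mathcal{P}} \left(1 - \frac1p\right) \le x \exp\!\left(-\sum_{p \le z,\ p \equiv 3 (4)} \frac1p\right).
$$
The reciprocal sum is evaluated by subtracting the $d=4$ case of Lemma \ref{reciprocalsums} (which gives $(\tfrac12 + o(1))\log\log z$ for the residue class $1 \bmod 4$) from Landau's estimate (Lemma \ref{landau}) and accounting for the term $p=2$, yielding $\sum_{p \le z,\ p \equiv 3 (4)} \frac1p = (\tfrac12 + o(1))\log\log z = (\tfrac12 + o(1))\log\log x$. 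Therefore $S(x,\mathcal{P}) \le x\exp\!\big(-(\tfrac12+o(1))\log\log x\big) = x(\log x)^{-1/2+o(1)}$, which is $o(x/\log\log x)$ since $(\log x)^{1/2}/\log\log x \to \infty$.

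The only genuine obstacle here is quantitative. The Legendre-type tools available earlier, in particular Corollary \ref{legendre_upper}, require all sieving primes to be below $\log x$, which would only save a factor $(\log\log x)^{-1/2+o(1)}$ and is not enough. Routing through the Brun sieve is exactly what lets $z$ grow like a power of $x$; the bookkeeping one must be careful about is keeping $z$ inside the uniformity range $z \le x^{1/(c\kappa\log\log x)}$ while still large enough that $\log\log z \sim \log\log x$, and the choice $z = x^{2/(c\log\log x)}$ does both.
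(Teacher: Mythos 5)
Your proposal is correct and follows essentially the same route as the paper: restrict the sieving set to primes $\equiv 3 \bmod 4$ below a threshold within the Brun sieve's range of uniformity, apply Theorem \ref{brun_sieve}, bound the resulting product by $\exp\bigl(-\sum_{p\in\mathcal{P}} 1/p\bigr)$, and note that the reciprocal sum is $\asymp \log\log x$, giving a saving of a power of $\log x$. The only differences are cosmetic: the paper takes $z = x^{1/(\log\log x)^2}$ (safely inside the range, with an unspecified constant $c_2/2$ in the exponent) and cites Lemma \ref{reciprocalsums} directly, whereas you push $z$ to the edge of the range and pin the constant down to $1/2$ via Mertens-type estimates for the progression $3 \bmod 4$.
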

\begin{proof}
The count desired can be obtained by sieving all primes $p \equiv 3 \bmod{4}$, 
and if we restrict the set of sieving primes the count only gets larger.  So 
let $\mathcal{P}$ be the set of primes $p \leq x^{1/ (\log\log{x})^2}$
with $p \equiv 3 \bmod{4}$.  With $x$ large enough we have 
$x^{1/ (\log\log{x})^2} \leq x^{1/(c \kappa \log\log{x})}$ and the Brun sieve applies.
Then Corollary \ref{legendre_upper}
and Lemma \ref{reciprocalsums} yields
$$
x \prod_{p \in \mathcal{P}} \left( 1 - \frac{1}{p} \right)
\leq (1 + o(1)) x \cdot {\rm exp} \left( - \sum_{p \in \mathcal{P}} \frac{1}{p} \right)
\leq (1 + o(1)) x \cdot {\rm exp} \left( -\frac{c_2}{2} \log\log{x} \right) \enspace .
$$
\end{proof}

Finally, in Section \ref{sec:algorithms}  we will frequently use various measures 
for the average number of prime factors of a number.  The results in the following Lemma
are not new, but since we could not find a reference for the third equality we present a proof.

% theorem with average number of prime factors.
\begin{lemma} \label{thm:average}
Let $\omega(n)$ be the number of distinct prime factors of $n$
and $\Omega(n)$ be the total number of prime factors of $n$.  Let $p$ be a prime.  Then
\begin{align*}
& \sum_{n \leq x} \omega(n) = O(x \log\log{x}) \enspace ,\\
& \sum_{n \leq x} \Omega(n) = O(x \log\log{x}) \enspace, \mbox{and} \\
& \sum_{p \leq x} \Omega(p-1) = O\left( \frac{x \log\log{x}}{\log{x}} \right)\enspace .
\end{align*}
\end{lemma}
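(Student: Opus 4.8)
The plan is to reduce all three sums, by interchanging the order of summation, to prime (or prime-power) reciprocal sums together with the counting results of Section~3, and then to deal with the large moduli in the third sum by an elementary device.

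The first two are routine. Writing $\omega(n)=\sum_{p\mid n}1$ and interchanging, $\sum_{n\le x}\omega(n)=\sum_{p\le x}\lfloor x/p\rfloor\le x\sum_{p\le x}1/p=O(x\log\log x)$ by Lemma~\ref{landau}. Likewise $\Omega(n)=\#\{(p,k):k\ge 1,\ p^{k}\mid n\}$ gives $\sum_{n\le x}\Omega(n)=\sum_{p^{k}\le x}\lfloor x/p^{k}\rfloor\le x\sum_{p^{k}\le x}p^{-k}$, and since the prime powers with exponent $k\ge 2$ contribute $\sum_{p}\sum_{k\ge 2}p^{-k}=\sum_{p}\frac{1}{p(p-1)}=O(1)$, this sum is again $x(\log\log x+O(1))=O(x\log\log x)$.

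For the third sum, use $\Omega(m)=\#\{(q,k):q\text{ prime},\ k\ge 1,\ q^{k}\mid m\}$ and interchange to get $\sum_{p\le x}\Omega(p-1)=\sum_{q^{k}\le x}\pi(x,q^{k},1)$, then split the range at $\sqrt x$. When $q^{k}\le\sqrt x$ we have $\log(x/q^{k})\ge\tfrac12\log x$, so Brun--Titchmarsh (Lemma~\ref{bruntitch}) gives $\pi(x,q^{k},1)<4x/(\varphi(q^{k})\log x)$; summing and using $\sum_{q^{k}\le\sqrt x}\varphi(q^{k})^{-1}=\sum_{q\le\sqrt x}(q-1)^{-1}+O(1)=\log\log x+O(1)$ (by Lemma~\ref{landau}, after comparing $1/(q-1)$ with $1/q$, the prime-power part being an absolutely convergent $\sum_{q}(q-1)^{-2}$), this part contributes $O(x\log\log x/\log x)$. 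When $\sqrt x<q^{k}\le x$ Brun--Titchmarsh no longer saves a $\log$, so I argue separately: the contributions from prime powers with exponent $k\ge 2$ in this range are $O(x/\log x)$ (there are only $O(\sqrt x/\log x)$ such moduli, and each gives $\pi(x,q^{k},1)\le x/q^{k}+1<\sqrt x+1$); and for primes $q\in(\sqrt x,x]$ the sum $\sum_{\sqrt x<q\le x}\pi(x,q,1)$ counts pairs $(p,q)$ with $p\le x$ prime, $q>\sqrt x$ prime, $q\mid p-1$, while each $p-1<x$ has at most one prime factor exceeding $\sqrt x$ (two of them, or the square of one, would exceed $x$), so this is at most $\pi(x)=O(x/\log x)$. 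Adding the pieces, $\sum_{p\le x}\Omega(p-1)=O(x\log\log x/\log x)$.

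The main obstacle is exactly the degradation of Brun--Titchmarsh for moduli close to $x$: there the trivial bound $\pi(x,q^{k},1)\le x/q^{k}+1$ loses the factor $1/\log x$ and only yields $O(x)$ overall. The fix is the elementary observation that an integer below $x$ has at most one prime factor above $\sqrt x$, which allows the trivial count $\pi(x)$ to stand in for the sieve estimate over large moduli; peeling off the higher prime powers in that range is a minor separate computation. Everything else is interchange of summation and the reciprocal-sum estimates already in hand.
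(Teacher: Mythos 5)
Your proof is correct, but for the third sum it takes a genuinely different route from the paper. The paper leans on Halberstam's theorem for $\sum_{p\le x}\omega(p-1)=O(x\log\log x/\log x)$ and only proves that the correction $\sum_{p\le x}\bigl(\Omega(p-1)-\omega(p-1)\bigr)$ is $O(x/\log x)$, by summing $\pi(x,q^r,1)$ over prime powers with exponent $r\ge 2$, split at $\sqrt x$ (Brun--Titchmarsh below, the trivial bound $x/q^r\le\sqrt x$ together with a count of such prime powers above). You instead bound $\sum_{p\le x}\Omega(p-1)=\sum_{q^k\le x}\pi(x,q^k,1)$ directly: Brun--Titchmarsh (Lemma \ref{bruntitch}) for all moduli $q^k\le\sqrt x$, where the reciprocal sum $\sum 1/\varphi(q^k)=\log\log x+O(1)$ yields the main $O(x\log\log x/\log x)$ term; the trivial bound for the few prime-power moduli with exponent $k\ge 2$ in $(\sqrt x,x]$; and, for prime moduli $q>\sqrt x$, the elementary observation that $p-1<x$ has at most one prime factor exceeding $\sqrt x$, so those pairs number at most $\pi(x)=O(x/\log x)$. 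That last observation is the new ingredient that lets you dispense with the citation to Halberstam entirely, so your argument is more self-contained than the paper's; what the paper's route buys is the sharper structural statement that $\Omega(p-1)$ and $\omega(p-1)$ differ on average by only $O(x/\log x)$, with the $\log\log x$ factor coming solely from the squarefree part. Your treatment of the first two sums (interchanging summation and invoking Lemma \ref{landau}, plus convergence of $\sum_q 1/(q(q-1))$) is the standard proof of the result the paper simply cites from Hardy and Wright, and is fine. All the individual estimates you use check out: the Brun--Titchmarsh application is legitimate since $q^k\le\sqrt x<x$ and $\log(x/q^k)\ge\tfrac12\log x$, and higher powers of primes $q>\sqrt x$ are automatically excluded since $q^2>x$.
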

\begin{proof}
For the first two see \cite[Theorem 430]{HarWri79}.  If $\Omega$ is replaced by 
$\omega$ in the third statement, then Halberstam provided a proof in \cite{Hal56}.  
To prove the result above, 
it suffices to show that $\sum_p \Omega(p-1) - \omega(p-1)$ is $O(x/\log{x})$.

Let $i(q^r \mid p-1)$ be the indicator function for the event ``$q^r$ divides $p-1$".
Then 
\begin{equation} \label{indicator}
\sum_{p < x} \Omega(p-1) - \omega(p-1)
= \sum_{p < x} \sum_{q^r  < x \atop r \geq 2} i(q^r \mid p-1)
= \sum_{q^r  < x \atop r \geq 2} \sum_{p < x} i(q^r \mid p-1) \enspace .
\end{equation}
Focusing first on $q^r < \sqrt{x}$, $\sum_p i(q^r \mid p-1)$ is given by $\pi(x, q^r, 1)$.
Using Lemma \ref{bruntitch} that half is upper bounded by
$$
\sum_{q^r < \sqrt{x} \atop r \geq 2} \frac{2x}{\varphi(q^r) \log{(x/q^r)}}
\leq \sum_{q^r < \sqrt{x} \atop r \geq 2} \frac{2x}{\varphi(q^r) \log{\sqrt{x}}}
\leq \sum_{q^r < \sqrt{x} \atop r \geq 2} \frac{8x}{q^r \log{x}}  \enspace .
$$
This is $O(x/\log{x})$, for the sum over prime powers with power at least $2$ converges.
To see this, consider the terms for a given prime $q$.
With $r \geq 2$, those terms 
are bounded by the corresponding geometric series with value $\frac{1}{q(q-1)}$.
Then extending the sum to be over all integers gives
$$
 \sum_{2 \leq q < x} \frac{1}{q(q-1)}
\leq  \sum_{2 \leq q < x} \frac{2}{q^2}
= O(1) \enspace .
$$
Returning to the second half of (\ref{indicator}), we know that at most $x/q^r$ 
integers are multiples of $q^r$, so that half is upper bounded by 
$$
\sum_{\sqrt{x} < q^r < x \atop r \geq 2} \frac{x}{q^r}
\leq \sum_{\sqrt{x} < q^r < x \atop r \geq 2} \sqrt{x} \enspace .
$$
To count prime powers, we use
$$
\sum_{2 \leq r \leq \log{x} } \pi(x^{1/r})
\leq \pi(x^{1/2}) + \log{x} \cdot \pi(x^{1/3})
= O \left( \frac{x^{1/2}}{\log{x}} + \log{x} \cdot \frac{x^{1/3}}{\log{x}} \right)
$$
and so the second half is also $O(x/\log{x})$.
\end{proof}

% new section
\section{Two strong liars}

Denote by $A(x)$ the number of odd $n \leq x$ with 
$\prod_{p \mid n} \gcd(n', p')=1$.
Let $A_r(x)$ be the subset of $A(x)$ whose least prime dividing $n'$ is $r$.
Then $A(x) = \log_2{x}+ \sum_r A_r(x)$, where $\log_2{x}$ counts $n$ for which 
$n-1$ is a power of $2$.

We will break $\sum_r A_r(x)$ into three sums depending on whether 
$r < (\log\log{x})^{1-\epsilon}$, $(\log\log{x})^{1-\epsilon} \leq r \leq (\log\log{x})^{1+\epsilon}$, 
or $r > (\log\log{x})^{1+\epsilon}$.  
Call these, respectively, $\sum_1, \sum_2, \sum_3$.
For ease of notation we use 
$z_1$ for $(\log\log{x})^{1-\epsilon}$ and $z_2$ for $(\log\log{x})^{1+\epsilon}$, 
while $z$ will denote a generic bound on $r$.
Here and in the next section, we wish to prove that $\lim_{x \to \infty} f(x) = a$
where $f(x)$ is the quotient of our target function and a simpler approximation.
To prove a sequence $a_n$ has the limit $a$, it 
is sufficient to show that for every $\epsilon > 0$, 
$$
a - \epsilon \leq \liminf a_n \leq \limsup a_n \leq a + \epsilon \enspace .
$$

This strategy mirrors closely an argument from \cite{Erd48} (thanks to 
Carl Pomerance for help with a particularly perplexing point).  In fact, upper bounds 
on all three of $\sum_1, \sum_2, \sum_3$ are identical to those used by Erd\H{o}s.
However, the new definition of $A_r(x)$ required for the current work does 
necessitate a different approach for the lower bound to $\sum_3$.
A new writeup is useful for other reasons:  we have streamlined the discussion of 
prime reciprocal sums, clarified the derivation of the upper bound to $\sum_2$, and 
fixed several confusing typographical errors.

% lemma on A_r(x)
\begin{lemma} \label{Ar_sums}
Let $0 < \epsilon < 1$ and $z_1 = (\log\log{x})^{1 - \epsilon}$.
We have
$$
\sum_{r < z_1} A_r(x) = o_\epsilon \left( \frac{x}{\log\log{x}} \right)  \enspace .
$$
\end{lemma}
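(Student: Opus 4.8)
The plan is to bound $A_r(x)$ from above for each fixed $r < z_1$ and then sum the resulting bounds. The key observation is that if $n \in A_r(x)$, then $r \mid n'$, so $n \equiv 1 \pmod{r}$; moreover $\gcd(n',p')=1$ for every prime $p \mid n$. In particular, for the smallest prime factor $p_0$ of $n$ we need $\gcd(n', p_0') = 1$, and since $r \mid n'$ this forces $r \nmid p_0'$, i.e. $p_0 \not\equiv 1 \pmod r$. More usefully, I would exploit that \emph{every} prime $p \mid n$ must satisfy $\gcd(p', n')=1$; since $r$ is a (fixed, small) prime dividing $n'$, this means no prime factor of $n$ is $\equiv 1 \pmod{r}$ --- equivalently, writing $n = r \cdot m + 1$ is not directly the constraint, but rather $n$ is built entirely from primes avoiding the residue class $1 \bmod r$. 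Combined with $n \equiv 1 \pmod r$, this is a strong restriction.

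The main step is a sieve estimate: the number of $n \le x$ with $n \equiv 1 \pmod r$ and with no prime factor $\equiv 1 \pmod r$ is small. First I would fix $n \equiv 1 \pmod r$ (contributing a density factor $1/r$), and then sieve out all primes $q \equiv 1 \pmod r$ with $q \le z$ for a suitable threshold $z$, say $z = \log x$ so that Corollary \ref{legendre_upper} applies. By Corollary \ref{legendre_upper} the count is at most
$$
(1+o(1)) \, \frac{x}{r} \, \exp\!\left( - \sum_{\substack{q \le \log x \\ q \equiv 1 (r)}} \frac 1 q \right).
$$
By Lemma \ref{reciprocalsums}(1) (with $d = r$ fixed, so $o_r(1)$ is harmless), the inner sum is $(1+o(1))(\log\log x)/\varphi(r) = (1+o(1))(\log\log x)/(r-1)$. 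Hence each $A_r(x) \le (1+o(1)) \frac{x}{r} \exp(-(1+o(1))(\log\log x)/(r-1))$. Since $r < z_1 = (\log\log x)^{1-\epsilon}$, we have $r - 1 \le (\log\log x)^{1-\epsilon}$, so the exponent is at most $-(1+o(1))(\log\log x)^{\epsilon}$, giving a bound like $\frac{x}{r}\exp(-(\log\log x)^{\epsilon}/2)$ per $r$.

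Summing over $r < z_1$ using $\sum_r 1/r \le \log\log z_1 + O(1) = O(\log\log\log x)$ (Lemma \ref{landau}), we get
$$
\sum_{r < z_1} A_r(x) \;\le\; (1+o(1)) \, x \, (\log\log\log x) \, \exp\!\left( -\tfrac12 (\log\log x)^{\epsilon} \right),
$$
which is $o_\epsilon(x/\log\log x)$ since $\exp(-\tfrac12(\log\log x)^\epsilon)$ decays faster than any power of $\log\log x$. The main obstacle I anticipate is making the uniformity of the $o(1)$ terms honest: the $o_r(1)$ in Lemma \ref{reciprocalsums}(1) depends on $r$, and $r$ ranges up to $z_1 \to \infty$. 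To handle this cleanly I would instead invoke Lemma \ref{reciprocalsums}(2) (valid uniformly for $1 < d \le \log x$, which certainly covers $r < z_1$), or equivalently part (3) as an upper bound; with that substitution the single error term $(1+o(1))$ is uniform over all $r$ in range and the argument goes through. A secondary point to check is that the $\exp$ bound genuinely beats the $\log\log\log x$ factor and the sum-over-$r$ loses nothing worse than polynomial in $\log\log x$ --- this is immediate from $(\log\log x)^{\epsilon} \gg \log\log\log\log x + \log\log\log x$.
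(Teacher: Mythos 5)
Your reduction is the right one (every $n$ counted by $A_r(x)$ has $n\equiv 1\bmod r$ and no prime factor $\equiv 1\bmod r$, so one sieves out the primes $q\equiv 1\bmod r$), and the back end of your argument (summing over $r<z_1$, noting that an exponential in $(\log\log x)^{\epsilon}$ beats any power of $\log\log x$) is fine. But there is a genuine error at the central step: you take the sieving threshold $z=\log x$ so that Corollary \ref{legendre_upper} applies, and then claim that $\sum_{q\le \log x,\ q\equiv 1 (r)} 1/q = (1+o(1))\log\log x/\varphi(r)$. Lemma \ref{reciprocalsums} applied to a sum cut off at $\log x$ gives $(1+o(1))\log\log(\log x)/\varphi(r)=(1+o(1))\log\log\log x/\varphi(r)$, not $\log\log x/\varphi(r)$. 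With the correct value the exponential factor is $\exp\bigl(-(1+o(1))\log\log\log x/(r-1)\bigr)$, which is $1-o(1)$ once $r$ is anywhere near $z_1=(\log\log x)^{1-\epsilon}$; your per-$r$ bound then degenerates to roughly $x/r$, and the primes $r$ in, say, $[z_1/2,z_1]$ alone contribute on the order of $x/\log\log\log x$, which is much larger than $x/\log\log x$. So the proof as written does not close, and the issue cannot be repaired by switching among parts (1)--(3) of Lemma \ref{reciprocalsums}: the problem is the range of the sum, not the uniformity in $r$. (A smaller point: Corollary \ref{legendre_upper} as stated sieves the full interval $[1,x]$, so the extra density factor $x/r$ from $n\equiv 1\bmod r$ needs a CRT variant as in Theorem \ref{thm:eulersieve}; but that factor is not needed anyway.)

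The way to fix it --- and this is what the paper does --- is to sieve by primes $q\equiv 1\bmod r$ up to a genuine power-type level, $q\le x^{1/(\log\log x)^2}$, which is legitimate because the Brun sieve (Theorem \ref{brun_sieve}) applies at that level; the Legendre sieve's $2^{|\mathcal P|}$ error is what forced you down to $z=\log x$. At the higher level, $\log\log\bigl(x^{1/(\log\log x)^2}\bigr)=\log\log x-2\log\log\log x\sim\log\log x$, so Lemma \ref{reciprocalsums} gives $\sum_{q\in\mathcal P}1/q\ge \log\log x/(2\varphi(r))$, and then
$$
A_r(x)\ \le\ (1+o(1))\,x\prod_{q\in\mathcal P}\Bigl(1-\frac 1q\Bigr)\ \le\ x\exp\Bigl(-\frac{\log\log x}{2\varphi(r)}\Bigr)\ \le\ x\,(\log\log x)^{-3}
$$
for large $x$, uniformly for $r<z_1$, since the exponent is at least $\tfrac12(\log\log x)^{\epsilon}$. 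Multiplying by the at most $(\log\log x)^{1-\epsilon}$ values of $r$ gives $o_\epsilon(x/\log\log x)$, exactly as you intended.
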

\begin{proof}
Suppose that $n$ is counted by $A_r(x)$ with $r$ an odd prime less than $z_1$.
Then $n \equiv 1 \bmod{r}$, but must not be divisible by any $p \equiv 1 \bmod{r}$.
So an upper bound on $A_r(x)$ is given by the count of $n$ not divisible by 
any $p \equiv 1 \bmod{r}$, and the count is further enlarged if we restrict our 
sieving set $\mathcal{P}$ to primes $p \equiv 1 \bmod{r}$ with $p < x^{1/(\log\log{x})^2}$.

Now the Brun sieve applies.  We use the upper bound from 
Corollary \ref{legendre_upper} and the unconditional lower bound from 
Lemma \ref{reciprocalsums} (note $z_1$ small enough so $r \leq \log{x} $).
For every $\epsilon$, we can take $x$ large enough so that
$$
A_r(x) \leq x \prod_{p \in \mathcal{P} } \left(1 - \frac{1}{p}\right)
\leq x \cdot {\rm exp}\left( - \sum_{p \in \mathcal{P}} \frac{1}{p} \right)
\leq x \cdot {\rm exp}\left( - \frac{\log\log{x}}{2\varphi(r)} \right) 
\leq x (\log\log{x})^{-3}
$$
where the last inequality follows from $r \leq (\log\log{x})^{1-\epsilon}$.
Then
$$
\sum_{r < z_1} A_r(x) \leq (\log\log{x})^{1-\epsilon} \cdot 
   	o \left( \frac{x}{(\log\log{x})^2} \right)
= o_\epsilon \left(\frac{x}{\log\log{x}} \right) \enspace .
$$
\end{proof}

% bound on Sigma_2
\begin{lemma} \label{Ar_sums2}
Let $0 < \epsilon < 4/5$, $z_1 = (\log\log{x})^{1 - \epsilon}$, and 
$z_2 = (\log\log{x})^{1+\epsilon}$.  Then as $x \to \infty$
$$
\sum_{z_1 \leq r \leq z_2} \! A_r(x) \leq c \frac{\epsilon x}{\log\log\log{x}}
$$
where $c$ is an absolute constant.
\end{lemma}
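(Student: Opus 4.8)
The plan is to bound each $A_r(x)$ by a pure residue-class count, estimate that count with an inclusion--exclusion sieve, and then sum over the primes $r \in [z_1, z_2]$; the factor $1/\log\log\log x$ and the saving $\epsilon$ will both come from Mertens-type estimates. For the reduction, suppose $n$ is counted by $A_r(x)$. Since $r$ is the \emph{least} prime dividing $n'$, we have $r \mid n' \mid n-1$, so $n \equiv 1 \pmod r$; and for every odd prime $q < r$ we have $q \nmid n'$, hence $q \nmid n-1$ (because $n-1 = 2^k n'$ has the same odd prime divisors as $n'$), i.e. $n \not\equiv 1 \pmod q$. Dropping the remaining requirements in the definition of $A(x)$ only enlarges the count, so $A_r(x) \le N_r(x)$, where $N_r(x)$ counts $n \le x$ with $n \equiv 1 \pmod r$ and $q \nmid n-1$ for all odd primes $q < r$.

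Next I would estimate $N_r(x)$ by sieving the progression $\{\,n \le x : n \equiv 1 \pmod r\,\}$ by the odd primes below $r$, using inclusion--exclusion exactly as in the Legendre sieve (Theorem~\ref{legendre}). Writing $Q$ for the product of the odd primes less than $r$, and noting that each such prime is coprime to $r$ so that the Chinese Remainder Theorem gives $x/(rd) + O(1)$ integers $n \le x$ with $n \equiv 1 \pmod{rd}$ for $d \mid Q$, one gets
$$
N_r(x) = \sum_{d \mid Q} \mu(d)\left(\frac{x}{rd} + O(1)\right) = \frac{x}{r}\prod_{2 < q < r}\left(1 - \frac{1}{q}\right) + O\!\left(2^{\pi(r)}\right).
$$
By Mertens' theorem \cite[Theorem~429]{HarWri79} the product is $O(1/\log r)$, and since $r \le z_2 = (\log\log x)^{1+\epsilon}$ we have $2^{\pi(r)} = x^{o(1)}$, which stays negligible even after summing over the at most $z_2$ admissible values of $r$. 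Hence $N_r(x) = O\!\left(\dfrac{x}{r \log r}\right)$, uniformly for $z_1 \le r \le z_2$.

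Finally I would sum. Using $\log r \ge \log z_1 = (1-\epsilon)\log\log\log x$ together with Landau's estimate (Lemma~\ref{landau}), which gives $\sum_{z_1 \le r \le z_2} 1/r = \log\log z_2 - \log\log z_1 + O(1/\log z_1) = \log\frac{1+\epsilon}{1-\epsilon} + o(1)$,
$$
\sum_{z_1 \le r \le z_2} A_r(x) \le \sum_{z_1 \le r \le z_2} N_r(x) = O\!\left( \frac{x}{\log z_1}\sum_{z_1 \le r \le z_2}\frac{1}{r}\right) = O\!\left( \frac{x}{(1-\epsilon)\log\log\log x}\,\log\frac{1+\epsilon}{1-\epsilon}\right),
$$
up to a negligible $x^{o(1)}$ term. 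For $0 < \epsilon < 4/5$ both $1/(1-\epsilon)$ and $\epsilon^{-1}\log\frac{1+\epsilon}{1-\epsilon}$ are bounded by absolute constants, so the right-hand side is $\le c\,\epsilon x/\log\log\log x$ for an absolute $c$, as claimed.

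The only genuinely delicate point is the bookkeeping between the two cancellations: the inner product over small primes saves a factor $1/\log r$, while the outer reciprocal sum runs over a \emph{triple}-logarithmic window and contributes only $\log\log z_2 - \log\log z_1 = \log\frac{1+\epsilon}{1-\epsilon}$, so it is their product $\frac{1}{\log z_1}\log\frac{1+\epsilon}{1-\epsilon}$ that manufactures both the $1/\log\log\log x$ and the $\epsilon$. One has to confirm that $[z_1,z_2]$ is wide enough for Landau's error term to be absorbed yet narrow enough, on the $\log\log$ scale, that $\log\frac{1+\epsilon}{1-\epsilon} = O(\epsilon)$ with an absolute implied constant — and it is exactly the hypothesis $\epsilon < 4/5$ that secures this, hence an absolute $c$.
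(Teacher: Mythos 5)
Your proof is correct and follows essentially the same route as the paper's: bound $A_r(x)$ by the count of $n \le x$ whose smallest odd prime factor of $n-1$ is $r$, show this is $O\!\left(x/(r\log r)\right)$ by a Legendre-sieve/Mertens estimate, and then sum over $z_1 \le r \le z_2$ via Lemma \ref{landau}, using $\epsilon < 4/5$ to get $\log\frac{1+\epsilon}{1-\epsilon} = O(\epsilon)$. The only (harmless) difference is that you run the Legendre sieve directly on the progression $1 \bmod r$, while the paper invokes Corollary \ref{legendre_leastprime} on integers with least prime factor $r$; your version is slightly more self-contained but yields the same bound.
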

\begin{proof}
For $z_1 \leq r \leq z_2$ we use a different upper bound on $A_r(x)$, namely the 
count of $n \leq x$ with $r$ as the smallest prime factor of $n-1$.  This is 
at most one away from the count of $n \leq x$ whose least prime factor is $r$.
By Corollary \ref{legendre_leastprime}, for large enough $x$
 this count is upper bounded by 
$$
c_1 \frac{x e^{-\gamma}}{r \log{r}} \enspace .
$$
Then 
$$
\sum_{r=z_1}^{z_2} \frac{c_1x e^{-\gamma}}{r \log{r}}
\leq \frac{c_1 x e^{-\gamma}}{ \log( (\log\log{x})^{1-\epsilon})} \sum_{r=z_1}^{z_2} \frac{1}{r}
\leq c' \frac{x e^{-\gamma}}{\log\log\log{x}} \cdot \log\left( \frac{1+\epsilon}{1-\epsilon} \right)
$$
where the sum is resolved via Lemma \ref{landau}.  Note 
$\epsilon < 4/5$ implies $\log(\frac{1+\epsilon}{1-\epsilon}) \leq 3\epsilon$.
\end{proof}

The final term is the one that will have the largest magnitude.  If $n$ is counted by 
$\sum_{r > z_2} A_r(x)$ then $n-1$ has no odd prime factor smaller than $z_2$.
We apply the Legendre sieve.

% main theorem
\begin{theorem}\label{thm:twostrongliars}
The number of odd $n \leq x$ with exactly two strong liars is given by 
$$
(1 + o(1)) \frac{x e^{-\gamma}}{\log\log\log{x}} \enspace .
$$
\end{theorem}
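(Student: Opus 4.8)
\emph{Proof proposal.} The plan is to show that the count in the theorem differs from $A(x)$ by only $o(x/\log\log\log x)$, and then to pin down $A(x)$ by feeding the estimates for $\sum_1,\sum_2$ into a two-sided evaluation of $\sum_3$, letting $\epsilon\to0$ at the end. For the first reduction I would argue that every odd $n\le x$ with $|S(n)|=2$ satisfies $\prod_{p\mid n}\gcd(n',p')=1$ by Proposition \ref{prop:n/p} (checking $n$ prime separately), hence is counted by $A(x)$; conversely, an $n$ counted by $A(x)$ can have $|S(n)|\ne2$ only if it is prime --- forcing $n-1$ to be a power of $2$, so at most $\log_2 x$ such $n$ --- or composite with every prime factor $\equiv1\bmod4$, and Corollary \ref{equiv_1} bounds the latter by $o(x/\log\log x)=o(x/\log\log\log x)$. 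So it suffices to prove $A(x)=(1+o(1))\,xe^{-\gamma}/\log\log\log x$. Write $A(x)=\log_2 x+\sum_1+\sum_2+\sum_3$; then $\log_2 x$ and $\sum_1$ (by Lemma \ref{Ar_sums}) are $o(x/\log\log\log x)$, and $0\le\sum_2\le c\epsilon x/\log\log\log x$ by Lemma \ref{Ar_sums2}, so everything reduces to bounding $\sum_3$ from above and below.

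For the upper bound, note that any $n$ counted by $\sum_3=\sum_{r>z_2}A_r(x)$ is odd with $n-1$ free of odd prime factors $\le z_2$, i.e.\ $n\not\equiv1\pmod q$ for every odd prime $q\le z_2$; let $N(x)$ count such $n$. Since $z_2=(\log\log x)^{1+\epsilon}\le\log x$ for large $x$, a Legendre (inclusion--exclusion) sieve on the odd integers, deleting the class $1\bmod q$ for each odd prime $q\le z_2$, gives
$$
N(x)=\frac x2\prod_{2<q\le z_2}\Bigl(1-\tfrac1q\Bigr)+O\bigl(2^{\pi(z_2)}\bigr)=(1+o(1))\,\frac{xe^{-\gamma}}{(1+\epsilon)\log\log\log x}
$$
by Mertens' theorem (as in Corollary \ref{legendre_eq} and Theorem \ref{thm:eulersieve}), the error being $o(x/\log\log\log x)$. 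From $\sum_3\le N(x)$ and the reduction above, for each fixed $\epsilon\in(0,4/5)$ one gets
$$
\limsup_{x\to\infty}\frac{A(x)}{xe^{-\gamma}/\log\log\log x}\le\frac1{1+\epsilon}+c\epsilon e^{\gamma},
$$
and $\epsilon\to0$ gives $\limsup\le1$.

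The lower bound for $\sum_3$ is the delicate part. I would write $\sum_3=N(x)-\log_2 x-B(x)$, where $B(x)$ counts the $n\le x$ surviving the sieve but with $\prod_{p\mid n}\gcd(n',p')>1$. For such $n$ there is an odd prime $\ell$ dividing $\gcd(p-1,n-1)$ for some prime $p\mid n$; since $\ell\mid n-1$ and $n-1$ has no odd prime factor $\le z_2$, necessarily $\ell>z_2$ and $p\equiv1\pmod\ell$. Writing $n=pm$, the congruence $p\equiv1\pmod\ell$ forces $m\equiv1\pmod\ell$, so (as $n$ is composite) $m\ge\ell+1$ and $p\le x/\ell$; together with $p\ge2\ell+1$ this forces $\ell<\sqrt x$. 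Hence
$$
B(x)\le\sum_{z_2<\ell<\sqrt x}\ \sum_{\substack{p\le x/\ell\\ p\equiv1\,(\ell)}}\Bigl(\frac x{p\ell}+1\Bigr)\ \ll\ x\!\!\sum_{z_2<\ell<\sqrt x}\frac{P(x,\ell)}{\ell}+\sum_{z_2<\ell<\sqrt x}\Bigl(\frac x{\ell^2}+1\Bigr),
$$
with $\ell$ over primes and $P$ as in Lemma \ref{reciprocalsums}; by part (3) of that lemma the first sum is $\ll x\log\log x\sum_{\ell>z_2}\ell^{-2}\ll x\log\log x/z_2$, and the second is $\ll x/z_2+\sqrt x$. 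Since $z_2=(\log\log x)^{1+\epsilon}$, both are $O\bigl(x(\log\log x)^{-\epsilon}\bigr)=o_\epsilon(x/\log\log\log x)$, so $\sum_3=(1+o_\epsilon(1))\,xe^{-\gamma}/((1+\epsilon)\log\log\log x)$. Then $A(x)\ge\sum_3$ yields $\liminf_x A(x)(\log\log\log x)/(xe^{-\gamma})\ge1/(1+\epsilon)$, and $\epsilon\to0$ gives $\liminf\ge1$; with the upper bound, $A(x)=(1+o(1))\,xe^{-\gamma}/\log\log\log x$, and the first reduction completes the proof.

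The hard part will be the bound on $B(x)$: because $A_r(x)$ is here indexed by the least prime factor of $n-1$ rather than of $n$, the lower bound for $\sum_3$ cannot be quoted from Erd\H{o}s's argument and must instead estimate the integers that survive the sieve on $n-1$ yet acquire a prime factor $\equiv1\pmod\ell$ for a large prime $\ell\mid n-1$. The saving comes entirely from the extra congruence $n\equiv1\pmod\ell$ with $\ell>z_2$, which costs a factor $1/\ell$; making this uniform over all such $\ell$ is precisely what Brun--Titchmarsh (Lemma \ref{bruntitch}) and the reciprocal-sum bound of Lemma \ref{reciprocalsums}(3) are for.
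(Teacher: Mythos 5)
Your proposal is correct and follows essentially the same route as the paper: the same decomposition $A(x)=\log_2 x+\sum_1+\sum_2+\sum_3$ with Lemmas \ref{Ar_sums} and \ref{Ar_sums2}, the Legendre/Mertens sieve up to $z_2$ for the upper bound on $\sum_3$, the exclusion of $n\equiv 1\pmod{\ell}$ divisible by a prime $p\equiv 1\pmod{\ell}$ (bounded via Brun--Titchmarsh and Lemma \ref{reciprocalsums}(3)) for the lower bound, and the final reduction through Proposition \ref{prop:n/p} and Corollary \ref{equiv_1} with $\epsilon\to 0$. The only nit is that your $B(x)$ as defined also contains primes $n$ with $n'>1$, for which the step ``$n=pm$, $m\ge\ell+1$'' does not apply; these are at most $\pi(x)=O(x/\log x)$, hence negligible, so the argument stands.
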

\begin{proof}
The main work is in counting odd $n \leq x$ with $\prod_{p \mid n} \gcd(n', p')=1$, 
and the main term is $\sum_{r > z_2} A_r(x)$.  This is smaller than the count of
$n$ where $n-1$ has no prime divisor smaller than $z_2$.  
With $z_2 < \log{x}$, the Legendre sieve gives us 
$$
\sum_{r > z_2} A_r(x) \leq
x \prod_{2 \leq r \leq z_2} \left( 1 - \frac{1}{r} \right) + 2^{\pi(z_2)}
= (1 + o(1)) \frac{x e^{-\gamma}}{\log{z_2}}
=  \frac{(1 + o(1)) x e^{-\gamma}}{(1 + \epsilon)\log\log\log{x}} \enspace .
$$

For a lower bound we exclude $n \equiv 1 \bmod{r}$ that are divisible by a prime 
$p \equiv 1 \bmod{r}$, and do this for all $r > z_2$.  For a given 
$r$ the number of $n$ excluded is 
$$
\sum_{p \equiv 1 \bmod{r}} \frac{x}{pr} + O(1)
$$
since the condition $n \equiv 0 \bmod{p}$ and $n \equiv 1 \bmod{r}$ repeats
every $pr$ integers by the Chinese Remainder Theorem. 
Note that $p \equiv 1 \bmod{r}$ and $pr \leq x$ implies that $r \leq \sqrt{x}$.
So applying Lemma \ref{reciprocalsums} (uniform upper bound)
and Lemma \ref{bruntitch} gives
\begin{align*}
\sum_{z_2<r < \sqrt{x}} & \sum_{p \equiv 1 \bmod{r}} \frac{x}{pr} + O(1)
\leq \sum_{z_2<r < \sqrt{x}}  \frac{x}{r} \frac{2}{\varphi(r)} (\log\log{x} + O(1))+
			O\left( \frac{x}{\varphi(r) \log{x}} \right)   \\
& \leq O\left( \sum_{z_2<r < \sqrt{x}} \frac{x\log\log{x} }{r^2} \right) + 
                   O\left( \sum_{z_2<r<\sqrt{x}} \frac{x}{r^2} \right) + 
		O\left( \frac{x\log\log{x}}{\log{x}} \right) \enspace .
\end{align*}
Taking the sum over integers rather than over primes, we have 
$$
\sum_{z_2 < r < \sqrt{x}} \frac{1}{r^2} < \int_{z_2}^{\sqrt{x}} \frac{1}{r^2} \ {\rm d}r
= -\frac{1}{\sqrt{x}} + \frac{1}{z_2}
< \frac{1}{(\log\log{x})^{1 + \epsilon}} \enspace .
$$
So the amount we are subtracting is upper bounded by
$$
O\left( \frac{x}{(\log\log{x})^{\epsilon}} \right) + O\left( \frac{x}{ (\log\log{x})^{1 + \epsilon}} \right)
+ O\left( \frac{x \log\log{x}}{\log{x}} \right)
= o_\epsilon \left( \frac{x}{\log\log\log{x}} \right) \enspace .
$$
Let $0 < \epsilon < 4/5$ be arbitrary.  By Lemma \ref{Ar_sums},
for large enough $x$ we have $\Sigma_1 < \epsilon$.  Then by Lemma \ref{Ar_sums2}
and the work above, we see that
$$
\frac{e^{-\gamma}}{1 + \epsilon} - \epsilon  \leq
\liminf \frac{A(x) }{x/\log\log\log{x}} \leq \limsup \frac{A(x)}{x/ \log\log\log{x}}
\leq \frac{e^{-\gamma}}{1 + \epsilon} + \epsilon +  c\epsilon \enspace .
$$
Since $0 < \epsilon < 4/5$ was arbitrary, the limit exists and the proper constant is 
indeed $e^{-\gamma}$.  As far as being divisible 
by at least one prime $\equiv 3 \bmod{4}$, by Corollary \ref{equiv_1} the number 
of $n \leq x$ only divisible by primes $\equiv 1 \bmod{4}$ is  
$o(x/\log\log{x})$.  The characterization in Proposition \ref{prop:n/p} now 
finishes the proof.
\end{proof}

The same proof technique can be extended to counting $n$ with exactly two Euler liars.

% main theorem counting n with 2 euler liars
\begin{theorem}\label{thm:twoeulerliars}
The number of $n \leq x$ with exactly two Euler liars is given by 
$$
(1 + o(1)) \frac{x e^{-\gamma}}{2 \log\log\log{x}} \enspace .
$$
\end{theorem}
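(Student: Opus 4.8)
Our plan is to reduce to the strong/Euler sieve machinery of Section~4 via the characterization in Proposition~\ref{prop:eulercases}, which splits the odd composite $n\le x$ with $|E(n)|=2$ into two disjoint families: (1) those with $n\equiv 3\bmod 4$ and $\prod_{p\mid n}\gcd(p',n')=1$, and (2) those with $n=pq\equiv 1\bmod 4$, $p,q\equiv 3\bmod 4$, and $\gcd(p',n')\gcd(q',n')=1$. Family (2) is contained in the set of $n\le x$ having exactly two prime factors, whose size is $O(x\log\log x/\log x)$ by Landau's count (cf.\ the remarks before Theorem~\ref{thm:twoprimes}); since $x\log\log x/\log x=o(x/\log\log\log x)$, family (2) is absorbed into the error term, and it suffices to count family (1).

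For family (1), note that $n\equiv 3\bmod 4$ automatically forces a prime factor $\equiv 3\bmod 4$ (a product of primes $\equiv 1\bmod 4$ is $\equiv 1\bmod 4$), so by Proposition~\ref{prop:n/p} family (1) is precisely $\{\,n\le x:\ n\equiv 3\bmod 4,\ \prod_{p\mid n}\gcd(p',n')=1\,\}$, i.e.\ the subset of the set counted by $A(x)$ lying in the progression $3\bmod 4$. Call this count $B(x)$. For $n\equiv 3\bmod 4$ we have $n-1=2n'$ with $n'$ odd, so writing $B(x)=1+\sum_r B_r(x)$ (the $1$ accounting for $n=3$, where $n'=1$), with $B_r(x)$ counting those $n$ whose least prime factor of $n'$ is $r$, we decompose $\sum_r B_r(x)=\Sigma_1+\Sigma_2+\Sigma_3$ exactly as in Section~4, using $z_1=(\log\log x)^{1-\epsilon}$ and $z_2=(\log\log x)^{1+\epsilon}$.

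Each of $\Sigma_1,\Sigma_2,\Sigma_3$ is then treated as in the proof of Theorem~\ref{thm:twostrongliars}, now with the restriction $n\equiv 3\bmod 4$ imposed throughout. For $\Sigma_1$ and $\Sigma_2$ the same upper bounds hold verbatim (intersecting with a residue class only shrinks each count), so Lemmas~\ref{Ar_sums} and~\ref{Ar_sums2} give $\Sigma_1=o_\epsilon(x/\log\log x)$ and $\Sigma_2\le c\epsilon\, x/\log\log\log x$. For $\Sigma_3$ the upper bound is the number of $n\le x$ with $n\equiv 3\bmod 4$ and $n'$ divisible by no odd prime $<z_2$; this is a Legendre sieve of $\mathcal X=\{n\le x:\ n\equiv 3\bmod 4\}$ by the conditions $n\not\equiv 1\bmod r$ for odd primes $r<z_2$, each of density $1/r$ in $\mathcal X$, so Theorem~\ref{thm:eulersieve} (equivalently Corollary~\ref{legendre_eq} with the density halved) yields $(1+o(1))\,\frac{e^{-\gamma}x}{2\log z_2}=(1+o(1))\,\frac{e^{-\gamma}x}{2(1+\epsilon)\log\log\log x}$; this is where the extra factor $\tfrac12$ appears. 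For the lower bound on $\Sigma_3$ we remove from $\mathcal X$, for each $r>z_2$, those $n\equiv 1\bmod r$ divisible by some prime $p\equiv 1\bmod r$; by the Chinese Remainder Theorem the number removed for fixed $r$ is $\sum_{p\equiv 1\,(r)}\frac{x}{4pr}+O(1)$, at most the analogous quantity in the proof of Theorem~\ref{thm:twostrongliars}, hence $o_\epsilon(x/\log\log\log x)$ in total via Lemmas~\ref{bruntitch} and~\ref{reciprocalsums}. Letting $\epsilon\to 0$ gives $B(x)=(1+o(1))\,\frac{e^{-\gamma}x}{2\log\log\log x}$, and adding the $O(x\log\log x/\log x)$ contribution of family (2) completes the proof.

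Since the heavy analytic lifting was already carried out for Theorem~\ref{thm:twostrongliars}, the only genuinely new points are checking that the congruence restriction $n\equiv 3\bmod 4$ slots cleanly into each of the three sieve estimates and produces exactly the factor $\tfrac12$ (which is the role of Theorem~\ref{thm:eulersieve}), and confirming that family (2) is negligible (immediate from the Landau count of products of two primes). I do not expect a real obstacle; as in Section~4 the most delicate step is the lower bound for $\Sigma_3$, where one must verify the inclusion–exclusion correction terms are $o(x/\log\log\log x)$, but this estimate is inherited almost word for word from Theorem~\ref{thm:twostrongliars}.
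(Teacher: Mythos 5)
Your proposal is correct and follows essentially the same route as the paper: the paper likewise invokes Proposition \ref{prop:eulercases}, reruns the Theorem \ref{thm:twostrongliars} argument for the case $n \equiv 3 \bmod 4$ with Theorem \ref{thm:eulersieve} supplying the factor $\tfrac12$ in the main term (the other terms being bounded by dropping the congruence condition), and dismisses the $n = pq$ case as $O(x\log\log x/\log x)$. Your write-up simply spells out in more detail the steps the paper leaves implicit.
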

\begin{proof}
The characterization is given by Proposition \ref{prop:eulercases}; we start with the first case.
We use the same proof technique as that for Theorem \ref{thm:twostrongliars}.  For all 
terms except the main term, we can drop the condition that $n \equiv 3 \mod{4}$ at no loss.
It does affect the main term however:  by Theorem \ref{thm:eulersieve} the count of 
$n \leq x$ with $n \equiv 3 \mod{4}$ and not divisible by any factor less than $\log\log{x}$
is given by $(1 + o(1)) x/(2e^{\gamma}\log\log\log{x})$.

The second case is asymptotically smaller, since the number of $n \leq x$ with two prime
 factors is $O((x \log\log{x})/(\log{x}))$.
\end{proof}

% the Bach section
\section{Two strong liars and two prime factors}

Our goal in this section is to prove the second of the three main theorems 
given in the introduction, thus providing an asymptotic formula for the count 
of odd $n \leq x$ with two strong liars and two prime factors.
Before discussing this in detail, we note that there are
$(1 + o(1)) (x \log\log x)/(\log x)$ numbers 
$n \le x$ that are a product of two primes
(this result is due to Landau \cite{Lan00}, see also
Wright \cite{Wri54}). 
The constant
$$
C = \prod_{p > 2} \left(1 - \frac{1}{(p-1)^2} \right)
$$
is what we would expect
from the following heuristic assumption: the two prime factors of $n$
are chosen independently, and fall into congruence classes in
the ``correct'' proportion.  The task, therefore, is to make
this rigorous.  The main idea of the proof will be to approximate
a count using a fixed number of terms of the inclusion-exclusion
formula, and then use a union bound to show that the approximate
count is good enough.  Hooley \cite{Hoo76} has called this strategy
the ``simple asymptotic sieve.''

\smallskip
In this section, $p$ and $q$ denote odd primes with
$p \le q$, and $d$ denotes a positive integer.  We now introduce
several sets:
$$
T = \{ pq \le x : \gcd(p',q') = 1 \};
$$
$$
T' = \{ pq \le x : \gcd(p',q') > 1 \};
$$
$$
S = \{ pq \le x : p,q \hbox{ odd } \};
$$
$$
S_d =  \{ pq \le x : p \equiv q \equiv 1 (d) \};
$$
$$
S^{(B)} = \{ n \in T : \gcd(p',q')>1 \hbox{ and has no primes $< B$ } \}.
$$
Note that if $d$ is odd,
$S_d = \{ pq \le x : d \hbox{ divides } \gcd(p',q') \}$.

Our first two tasks are to show that $S^{(B)}$ is not too large,
then to approximately count $S_d$.   A good tool for the first job
is the Brun-Titchmarsh theorem (Lemma \ref{bruntitch}), but the
 factor $\log(x/d)$ in its denominator can give trouble
when $d$ is close to $x$.  Our way around this is inspired by 
the chess player's gambit: give up a piece now to win later.  More
precisely, we will increase $x$, thereby bringing the log factor
under control at the price of a slightly worse upper bound
which is still good enough.
For the second job, since we will only be concerned with a fixed
number of $d$'s (depending on $B$), we can rely on a non-uniform
version of the prime number theorem for arithmetic progressions
(Lemma \ref{PNTprogs}).

%\proclaim{Lemma 1.}
\begin{lemma} \label{SBbound}
We have
$$
|S^{(B)}| = O \left( \frac{x \log\log x}{\sqrt B \log x} \right) \enspace ,
$$
where the implied constant is absolute.
\end{lemma}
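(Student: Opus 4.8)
The plan is to bound $|S^{(B)}|$ by first fixing the smallest prime factor of $\gcd(p',q')$, say $d = r$ with $r \geq B$ prime, and then counting pairs $pq \leq x$ with $p \equiv q \equiv 1 \pmod r$. For such a pair, with $p \leq q$, we have $p \leq \sqrt{x}$, so we sum over $p$ first and then count $q \equiv 1 \pmod{r}$ with $q \leq x/p$. Since $p \equiv 1 \pmod r$ forces $r \leq p - 1 < \sqrt{x}$, this keeps $r$ in a range where Brun--Titchmarsh is applicable; but the inner count $\pi(x/p, r, 1)$ has a denominator $\log((x/p)/r)$ which degrades when $x/p$ is close to $r$, i.e.\ when $p$ is close to $\sqrt{x}/\sqrt{r}$ or larger. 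This is exactly the difficulty the paper flags.

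First I would apply the ``chess gambit'' described in the text: instead of bounding $\pi(x/p, r, 1)$ directly, bound it by $\pi(xr/p, r, 1) \leq \frac{2xr/p}{\varphi(r)\log((x/p))}$ — wait, more carefully, replace the upper limit $x/p$ by $(x/p)\cdot r$ so that the log becomes $\log((x/p)\cdot r / r) = \log(x/p)$, at the cost of a factor $r$ in the numerator. Concretely, $\pi(x/p, r, 1) \leq \pi(xr/p, r, 1) \leq \frac{2xr/p}{\varphi(r)\log(x/p)} = \frac{2x}{(r-1)(x/p)^{0}}\cdots$; the point is we get $O\!\left(\frac{x}{(r-1)\, p\, \log(x/p)}\right)$ with a clean log. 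Then I sum over primes $p \leq \sqrt{x}$: using $\log(x/p) \geq \tfrac12 \log x$ for $p \leq \sqrt{x}$ and the Mertens-type bound $\sum_{p \leq \sqrt x} \frac{1}{p} = \log\log x + O(1)$ (Lemma \ref{landau}), the $p$-sum contributes $O\!\left(\frac{x \log\log x}{(r-1)\log x}\right)$. Finally I sum over primes $r \geq B$: $\sum_{r \geq B} \frac{1}{r-1} = \sum_{r \geq B}\frac{1}{r} + O(1)$ — but this sum diverges, so I must instead use that $r \leq \sqrt{x}$ (since $r \mid p-1$ and $p \leq \sqrt x$) and bound $\sum_{B \leq r \leq \sqrt x} \frac{1}{r-1}$. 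That still diverges like $\log\log x$, which is not good enough; so the bound on the $p$-sum needs the stronger estimate $\sum_{p \equiv 1(r),\ p \leq \sqrt x} \frac1p \ll \frac{\log\log x}{\varphi(r)}$ from Lemma \ref{reciprocalsums}(3), giving an extra $\frac{1}{r}$ saving, so the double sum over $r$ and $p$ together behaves like $\sum_{r \geq B} \frac{1}{r^2} = O(1/B)$.

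So the cleaner route is: $|S^{(B)}| \leq \sum_{r \geq B,\ r \text{ prime}} \#\{pq \leq x : p \equiv q \equiv 1 \pmod r,\ p \leq q\}$, and for each $r$ bound the inner count by $\sum_{p \leq \sqrt x,\ p \equiv 1(r)} \pi(xr/p, r, 1)$, apply Brun--Titchmarsh (Lemma \ref{bruntitch}) to get $\ll \frac{x}{\log x}\sum_{p \leq \sqrt x,\ p\equiv 1(r)} \frac{1}{p}\cdot\frac{1}{\varphi(r)}$, then apply Lemma \ref{reciprocalsums}(3) to bound the $p$-sum by $\frac{2(\log\log x + O(1))}{\varphi(r)}$. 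This yields $\ll \frac{x \log\log x}{\log x}\cdot \frac{1}{\varphi(r)^2} \ll \frac{x\log\log x}{\log x}\cdot\frac{1}{(r-1)^2}$ per $r$. Summing $\sum_{r \geq B} \frac{1}{(r-1)^2} = O(1/B)$ then gives $|S^{(B)}| = O\!\left(\frac{x\log\log x}{B \log x}\right)$, which is actually stronger than the claimed $O\!\left(\frac{x\log\log x}{\sqrt B\log x}\right)$; the weaker statement is certainly fine. The main obstacle is getting the $1/\varphi(r)^2$ (two independent savings, one from Brun--Titchmarsh and one from the reciprocal-sum lemma) while keeping the $x/p$ range honest — this is where the $x \mapsto xr$ gambit and the uniform bound in Lemma \ref{reciprocalsums}(3) both have to be deployed carefully, and where one must check the ``lost term'' ($p = r+1$ prime, or the boundary $p = \sqrt x$) contributes negligibly, exactly as in the proof of Lemma \ref{reciprocalsums}.
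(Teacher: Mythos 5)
Your overall architecture is the same as the paper's (union bound over primes $r \ge B$ dividing $\gcd(p',q')$, restrict to $p \le \sqrt{x}$, Brun--Titchmarsh on the inner count after inflating the range, then Lemma \ref{reciprocalsums}(3) for the $p$-sum), but there is a genuine quantitative error in how you execute the ``gambit,'' and it propagates to your final bound. If you inflate the range for $q$ by a factor of $r$, Brun--Titchmarsh gives
$$
\pi(xr/p,\,r,\,1) \;<\; \frac{2\,(xr/p)}{\varphi(r)\,\log(x/p)} \;=\; \frac{2x r}{p\,(r-1)\,\log(x/p)} \;\asymp\; \frac{x}{p\,\log(x/p)},
$$
i.e.\ the inflation factor $r$ cancels the $1/\varphi(r)$ coming from Brun--Titchmarsh. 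Your claim that this step yields $O\!\bigl(\tfrac{x}{(r-1)\,p\,\log(x/p)}\bigr)$, and later that the per-$r$ contribution is $\ll \tfrac{x\log\log x}{\log x}\cdot\tfrac{1}{\varphi(r)^2}$ (``two independent savings''), silently drops this factor of $r$. With the factor kept, the only surviving saving in $r$ is the single $1/\varphi(r)$ from Lemma \ref{reciprocalsums}(3), so the per-$r$ bound is $\ll \tfrac{x\log\log x}{\log x}\cdot\tfrac{r}{\varphi(r)^2}\asymp \tfrac{x\log\log x}{r\log x}$, and summing over primes $B \le r \le \sqrt{x}$ gives roughly $\tfrac{x(\log\log x)^2}{\log x}$ with no decay in $B$ at all --- useless for the application, where $|S^{(B)}|$ must become negligible as $B\to\infty$.

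The fix is exactly the compromise the paper makes: inflate by $\sqrt{r}$ rather than $r$. Since $p \le \sqrt{x}$ and $r \le \sqrt{x}$, the quantity $(x\sqrt{r}/p)/r = x/(p\sqrt{r})$ is at least $x^{1/4}$, so the Brun--Titchmarsh logarithm is $\ge \tfrac14\log x$, while the numerator only picks up $\sqrt{r}$, leaving a saving $\sqrt{r}/\varphi(r) \asymp r^{-1/2}$ from Brun--Titchmarsh. Combined with the $1/\varphi(r)$ from Lemma \ref{reciprocalsums}(3) this gives $\ll \tfrac{x\log\log x}{r^{3/2}\log x}$ per prime, and $\sum_{r\ge B} r^{-3/2} = O(B^{-1/2})$ yields the lemma. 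This also explains why the exponent in the statement is $1/2$: your claimed stronger bound $O\!\bigl(\tfrac{x\log\log x}{B\log x}\bigr)$ is not delivered by this method, precisely because uniformity in $r$ forces you to spend part of the $\varphi(r)^2$ saving on taming the logarithm.
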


\begin{proof}
Let $b$ be an odd prime.  We first find an upper bound for $|S_b|$, namely
$$
\sum_{\substack{p \le \sqrt x \\ p \equiv 1 (b)}}
\# \{ q : p \le q \le x/p \hbox{ and } q \equiv 1 (b) \}
% \eqno{(1)}
\le \sum_{\substack{p \le \sqrt x \\ p \equiv 1 (b)}}
\# \{ q : q \le b^{1/2} x/p \hbox{ and } q \equiv 1 (b) \} \enspace .
$$
We may assume that $b \le \sqrt x$, since
the sum vanishes otherwise  (note that $b^2 \le pq = x$).
Then we are guaranteed that $b^{1/2}x / p > b$ since $p \sqrt b < pb \le pq \leq x$.
This allows us to estimate the summand using Lemma \ref{bruntitch},
and thereby get
$$
|S_b|
\le
\sum_{\substack{p \le \sqrt x \\ p \equiv 1 (b)}}
\frac{2 b^{1/2} x} {p \varphi(b) \log(x/(p b^{1/2}))} \enspace .
$$
We know that $p \le \sqrt x$ and $\sqrt b \le x^{1/4}$,
making $x / (p \sqrt b) \ge x^{1/4}$.  Therefore,
$$
\frac 1 {\log (x / (p \sqrt b))} \le \frac 4 {\log x} 
$$
and thus
$$
|S_b| \le
\frac {8 \sqrt b x} {\varphi(b) \log x}
\sum_{\substack{p \le \sqrt x \\ p \equiv 1 (b)}} \frac 1 p \enspace .
% \eqno{(2)}
$$
By Lemma \ref{reciprocalsums}, the inner sum has an upper bound of 
$2 \varphi(b)^{-1}(\log\log{x} + O(1))$.
Summing over
all primes $b \ge B$, we get the result.
\end{proof}

% \proclaim{Lemma 2.}
\begin{lemma} \label{Sdasympt}
Let $d \ge 1$.  Then
$$
|S_d| = (1 + o_d(1)) \frac{x \log\log x}{\varphi(d)^2 \log x} \enspace .
$$
\end{lemma}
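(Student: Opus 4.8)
The plan is to express $|S_d|$ as a sum over the smaller prime $p$ of the number of admissible larger primes $q$, and to show the mass is concentrated on $p \le y := x^{1/\log\log x}$. This cut is chosen so that, on the one hand, $\log\log y = (1+o(1))\log\log x$, so truncating a reciprocal sum there loses only a $\log\log\log x$ amount and hence cannot affect the main term; and on the other hand $x/p \ge x^{1-1/\log\log x} \to \infty$, so Lemma \ref{PNTprogs} applies to $\pi(x/p,d,1)$ with an error $o_d(1)$ that is \emph{uniform} in $p$ over $p \le y$ (the supremum of $o_d(1)$ over arguments $\ge x^{1-o(1)}$ still tends to $0$). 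Write $N(p) = \#\{q : p \le q \le x/p,\ q \equiv 1 \pmod d\}$, so that $|S_d| = \sum_{p \le \sqrt x,\ p\equiv 1(d)} N(p)$, the restriction $p \le \sqrt x$ coming from $p \le q$ and $pq \le x$.

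For the lower bound I would keep only $p \le y$ and use $N(p) \ge \pi(x/p,d,1) - \pi(y)$; the sum of the $\pi(y)$ losses is at most $\pi(y)^2 = x^{o(1)}$, negligible next to $x\log\log x/\log x$. Since $\log(x/p) = (1+o(1))\log x$ uniformly for $p \le y$, Lemma \ref{PNTprogs} gives $\pi(x/p,d,1) = (1+o_d(1))\,x/(\varphi(d)\,p\log x)$ uniformly, and summing against $\sum_{p\le y,\ p\equiv 1(d)} 1/p = (1+o_d(1))(\log\log x)/\varphi(d)$ (Lemma \ref{reciprocalsums}, part 1; or Lemma \ref{landau} when $d=1$, together with $\log\log y = (1+o(1))\log\log x$) yields $|S_d| \ge (1+o_d(1))\,x\log\log x/(\varphi(d)^2\log x)$.

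For the upper bound I would use $N(p) \le \pi(x/p,d,1)$ and split the sum at $y$. The range $p \le y$ contributes exactly the main term by the computation just described. For $y < p \le \sqrt x$, I would apply Brun--Titchmarsh (Lemma \ref{bruntitch}): since $x/(pd) \ge \sqrt x / d \ge x^{1/3}$ for $x$ large, $\pi(x/p,d,1) \le 6x/(\varphi(d)\,p\log x)$, and by Lemma \ref{landau} one has $\sum_{y < p \le \sqrt x} 1/p = \log\log\sqrt x - \log\log y + o(1) = \log\log\log x - \log 2 + o(1)$. Hence this tail contributes $O_d\!\big(x\log\log\log x/\log x\big) = o_d\!\big(x\log\log x/(\varphi(d)^2\log x)\big)$. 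Adding the two ranges gives the matching upper bound, and together with the lower bound this proves the lemma.

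The only mildly delicate point — and the one I would flag as the crux — is precisely the choice of the cutoff $y = x^{1/\log\log x}$: it must be large enough that $\log\log y \sim \log\log x$ (so truncation of the reciprocal sum is harmless) yet small enough that $x/p$ remains $x^{1-o(1)}$ (so the $o_d(1)$ in the prime number theorem for arithmetic progressions is uniform over $p \le y$, and the Brun--Titchmarsh denominator $\log(x/(pd))$ stays comparable to $\log x$ in the tail). Everything else is routine bookkeeping with the reciprocal-sum estimates of Lemmas \ref{landau} and \ref{reciprocalsums}.
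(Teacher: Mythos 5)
Your proof is correct and follows essentially the same route as the paper: decompose $|S_d|$ over the smaller prime $p\le\sqrt x$, apply the prime number theorem for progressions together with the reciprocal-sum estimates on the range of small $p$, and show the large-$p$ tail is of lower order. The only differences are cosmetic: you use the moving cutoff $y=x^{1/\log\log x}$ (so $\log\log y\sim\log\log x$ does the work directly) where the paper splits at a fixed $x^{\epsilon}$ and lets $\epsilon\to 0$ at the end, and you invoke Brun--Titchmarsh in the tail where the paper simply drops the congruence condition on $q$; both variants are sound.
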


\begin{proof}
We have
$$
|S_d| = \sum_{\substack{p \le \sqrt x \\ p \equiv 1 (d)}}
\# \{ q : p \le q \le x/p \hbox{ and } q \equiv 1 (d) \} \enspace .
$$
If we drop the lower bound on $q$, we incur an error that is
no more than
$$
\sum_{p \le \sqrt x }
\# \{ q : q \le p \}
\le
\sum_{p \le \sqrt x }
\# \{ q : q \le \sqrt x \}
=
O \left(\frac x {\log^2 x} \right) \enspace .
$$
Accordingly, we can work with the simpler sum
\begin{equation}\label{simpler}
\sum_{\substack{p \le \sqrt x \\ p \equiv 1 (d)}}
\# \{ q : q \le x/p \hbox{ and } q \equiv 1 (d) \} \enspace .
% \eqno{(3)}
\end{equation}
Fix $\epsilon$ with $0 < \epsilon < 1/2$.   We will split the
sum, using the break point $p = x^\epsilon$.
The contribution to (\ref{simpler}) from the $p \le x^\epsilon$ is
\begin{equation}\label{smallp}
\sum_{\substack{p \le x^\epsilon \\ p \equiv 1 (d)}}
\# \{ q : q \le x/p \hbox{ and } q \equiv 1 (d) \}
= \sum_{\substack{p \le x^\epsilon \\ p \equiv 1 (d)}}
(1 + o_d(1)) \frac{x}{\varphi(d) p \log (x/p)}
% \eqno{(4)}
\end{equation}
by Lemma \ref{PNTprogs}.
The assumption $p \leq x^{\epsilon}$ implies $x/p \geq x^{1 - \epsilon}$.
Thus $(1-\epsilon)\log x \le \log(x/p) \le \log x$, which gives
$$
\frac{((1 + o_d(1))x}{\varphi(d) \log x}
\sum_{\substack{p \le x^\epsilon \\ p \equiv 1 (d)}} \frac 1 p
\le \hbox{(\ref{smallp})} \le
\frac{((1 + o_d(1))x}{(1 - \epsilon) \varphi(d) \log x}
\sum_{\substack{p \le x^\epsilon \\ p \equiv 1 (d)}} \frac 1 p \enspace .
$$
For the prime reciprocal sum we apply Lemma \ref{reciprocalsums}, which yields
$$
\sum_{\substack{p \le x^\epsilon \\ p \equiv 1 (d)}} \frac 1 p
= (1 + o_d(1)) \frac{\log\log x^\epsilon}{\varphi(d)}
= (1 + o_d(1)) \frac{\log\log x + \log \epsilon}{\varphi(d)} \enspace .
$$
This gives
$$
(1 + o_d(1))\frac{x(\log\log x + \log \epsilon)}
                      {\varphi(d)^2 \log x}
\le \hbox{(\ref{smallp})} \le
(1 + o_d(1))\frac{x(\log\log x + \log \epsilon)}
                      {(1 - \epsilon) \varphi(d)^2 \log x}
$$
and since $\epsilon$ was arbitrary and $x \to \infty$, we conclude
$$
(\ref{smallp}) = (1 + o_d(1)) \frac{x \log\log x}{\varphi(d)^2 \log x} \enspace .
$$

To finish off the proof, we will show that the sum over
primes larger than $x^\epsilon$ does not grow this quickly.
This sum is
\begin{equation}\label{largep}
\sum_{\substack{x^\epsilon < p \le \sqrt x \\ p \equiv 1 (d)}}
\# \{ q : q \le x/p \hbox{ and } q \equiv 1 (d) \}
% \eqno{(5)}
\end{equation}
and it has an upper bound of
$$
\sum_{x^\epsilon < p \le \sqrt x }
\# \{ q : q \le x/p \}
=
\sum_{x^\epsilon < p \le \sqrt x }
(1 + o(1)) \frac x {p \log (x/p)}
\le
(1 + o(1)) \frac x {(1/2) \log x}
\sum_{x^\epsilon < p \le \sqrt x } \frac 1 p \enspace .
$$
By Lemma \ref{landau}
the inner sum is $\log(1/2) - \log \epsilon + O((\log x)^{-1})$.
So
$$
\hbox{(\ref{largep})} = O_\epsilon \left( \frac x {\log x} \right)
$$
and we are done.
\end{proof}

We are now ready for the main event.  Our strategy will be to 
first estimate how many odd $pq$'s satisfy only condition (2) in
the characterization of strong liars in Proposition \ref{prop:n/p}
(this is of interest by itself).  Then, we make a similar estimate
under the additional requirement that at least one of $p$ and $q$ be 3 modulo 4.

In the next two theorems,
$$
C := \prod_{p > 2} \left(1 - \frac{1}{(p-1)^2}\right) = 0.66016...\ .
$$

% main thm on two factors and prod gcd(n',p')=1
\begin{theorem} \label{thm:condition2}
The number of odd $n = pq \le x$ with $\gcd(p',q')=1$ is
\begin{equation}\label{maineqn}
(1 + o(1)) \frac{C x \log\log x} {\log x}.
% \eqno{(1)}
\end{equation}
\end{theorem}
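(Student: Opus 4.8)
The plan is to run Hooley's ``simple asymptotic sieve'': write $T$ as the set of odd semiprimes up to $x$ with the ``bad'' members removed, count the bad members by a \emph{truncated} inclusion--exclusion, and control the tail of the truncation with Lemma~\ref{SBbound}. First I would note that $\gcd(p',q')=1$ for $n=pq$ is exactly condition (2) of Proposition~\ref{prop:n/p} (apply the identity in its proof with $n/p=q$), and that by definition $S = T \sqcup T'$, so $|S| = |T| + |T'|$. Here $|S| = (1+o(1))\,x\log\log x/\log x$: the semiprimes $\le x$ have this asymptotic by Landau~\cite{Lan00}, while the even ones, $n = 2q$ with $q$ prime, number $\pi(x/2) = O(x/\log x)$, which is negligible. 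Thus it remains to estimate $|T'|$ and subtract.

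For $|T'|$, fix a parameter $B$ and split $T' = T'_{<B} \sqcup S^{(B)}$, where $T'_{<B}$ consists of the $pq \in T'$ for which the (necessarily odd) integer $\gcd(p',q')$ has a prime factor $< B$, and $S^{(B)}$ is its complement in $T'$, i.e.\ the $pq \le x$ with $\gcd(p',q') > 1$ all of whose prime factors are $\ge B$. For an odd prime $\ell$ one has $\ell \mid \gcd(p',q')$ iff $pq \in S_\ell$, so $T'_{<B} = \bigcup_{2 < \ell < B} S_\ell$ is a \emph{finite} union. Inclusion--exclusion, together with $\bigcap_{\ell \in I} S_\ell = S_{\prod_{\ell \in I}\ell}$ (the remark that $S_d = \{pq \le x : d \mid \gcd(p',q')\}$ for odd $d$), gives
$$
|T'_{<B}| = -\sum_{\substack{d > 1 \\ d \mid \prod_{2 < \ell < B}\ell}} \mu(d)\,|S_d| \enspace .
$$
Only finitely many $d$ occur here, depending on $B$ alone, so Lemma~\ref{Sdasympt} applies to each term and the $o_d(1)$'s coalesce into one $o(1)$; using $-\sum_{d>1,\,d\mid P}\mu(d)/\varphi(d)^2 = 1 - \prod_{\ell \mid P}(1 - 1/(\ell-1)^2)$ I would obtain
$$
|T'_{<B}| = (1+o(1)) \left(1 - \Pi_B\right) \frac{x\log\log x}{\log x}, \qquad \Pi_B := \prod_{2 < \ell < B}\Bigl(1 - \frac{1}{(\ell-1)^2}\Bigr),
$$
as $x \to \infty$ with $B$ fixed.

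To finish, combine this with Lemma~\ref{SBbound}, which yields $|S^{(B)}| = O\!\left(x\log\log x/(\sqrt B \log x)\right)$ with an \emph{absolute} implied constant $c$. Writing $M = x\log\log x/\log x$ and using $|T| = |S| - |T'_{<B}| - |S^{(B)}|$ with $0 \le |S^{(B)}| \le cM/\sqrt B$, one gets for each fixed $B$
$$
\Pi_B - \frac{c}{\sqrt B} \;\le\; \liminf_{x\to\infty} \frac{|T|}{M} \;\le\; \limsup_{x\to\infty} \frac{|T|}{M} \;\le\; \Pi_B \enspace .
$$
Letting $B \to \infty$, the partial products $\Pi_B$ decrease to $C$ (each factor lies in $(0,1)$ and $\sum 1/(\ell-1)^2 < \infty$) and $c/\sqrt B \to 0$, so $\lim_{x\to\infty} |T|/M = C$, which is (\ref{maineqn}).

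The crux — and the reason for truncating at $B$ — is that one cannot simply sum the inclusion--exclusion over all odd squarefree $d$: the error in Lemma~\ref{Sdasympt} is only $o_d(1)$, with nothing uniform in $d$, so an infinite sum of such estimates is meaningless. The way around it is precisely to cut the sieve off at a fixed $B$, evaluate the $O_B(1)$ main terms exactly, and kill the entire tail $d \ge B$ in one shot via the Brun--Titchmarsh input behind Lemma~\ref{SBbound}, only afterwards sending $B \to \infty$. The remaining points — the even semiprimes, the diagonal $p = q$ (of which there are $\pi(\sqrt x) = O(\sqrt x/\log x)$), and the $p \le q$ normalization — all contribute $o(M)$ and need only routine bookkeeping.
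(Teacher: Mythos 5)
Your proposal is correct and follows essentially the same route as the paper: Hooley's simple asymptotic sieve with inclusion--exclusion truncated at a fixed $B$, Lemma~\ref{Sdasympt} for the finitely many main terms, Lemma~\ref{SBbound} for the tail, and then $B \to \infty$. Writing $T' = T'_{<B} \sqcup S^{(B)}$ exactly rather than via sandwich inequalities, and quoting Landau for $|S|$ instead of using the $d=1$ term of the smooth sum, are only cosmetic repackagings of the paper's argument.
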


\begin{proof}
We have
$$
\bigcup_{\substack{2<r<B \\ r \  \rm prime}} S_d \subseteq T' \subseteq
\bigcup_{\substack{2<r<B \\ r \  \rm prime}} S_d \cup S^{(B)} \enspace .
$$
The set of odd primes up to $B$ is finite, so we can use
inclusion-exclusion and get
$$
\sum_r |S_r| - \sum_{r,s} |S_{rs} | + \sum_{r,s,t} |S_{rst} | + \cdots
\le |T'| \le
\sum_r |S_r| - \sum_{r,s} |S_{rs} | + \sum_{r,s,t} |S_{rst} | + \cdots
+ |S^{(B)}| \enspace ,
$$
where $r,s,t,\ldots$ denote distinct odd primes $< B$.  This can be
written another way as
\begin{equation} \label{musums}
 \sum_{\substack{\rm odd \ d \ge 3 \\  B-\rm smooth}} -\mu(d) |S_d|
\le |T'| \le
 \sum_{\substack{\rm odd \ d \ge 3 \\  B-\rm smooth}} -\mu(d) |S_d|
+ |S^{|B|}| \enspace ,
\end{equation}
where the sums over $d$ are finite since $S_d$ is empty for $d > x$.
Since $S$ is the disjoint union of $T$ and $T'$, and $S = S_1$,
we have
$$
|T| = |S| - |T'| = |S_1| - |T'| \enspace .
$$
Combining this with (\ref{musums}) we get
$$
\sum_{\substack{\rm odd \ d \ge 1 \\  B-\rm smooth}} \mu(d) |S_d|
- |S^{(B)}|
\le |T| \le
\sum_{\substack{\rm odd \ d \ge 1 \\ B-\rm smooth}} \mu(d) |S_d| \enspace .
$$
Since $B$ is fixed, we can use Lemma \ref{Sdasympt}
to express the sum over $d$ as
$$
(C_B + o_B(1)) \frac{x \log\log x}{\log x} \enspace .
$$
where
$$
C_B = \prod_{3 \le p < B} \left(1 - \frac {1}{(p-1)^2}\right)
    = \sum_{\substack{\rm odd \ d \ge 1 \\  B-\rm smooth}}
           \frac{\mu(d)}{\varphi(d)^2} \enspace .
$$
Combining these results with Lemma \ref{SBbound}, we then get
$$
\frac{|T|}{x \log\log x / \log x}
= C_B + o_B(1) + O(B^{-1/2}) \enspace .
$$
For any $\delta>0$, we can choose a $B$ for which
both $C_B - C$ and the $B^{-1/2}$ term are bounded by $\delta/4$
in absolute value.  With any such choice of $B$, the
$o_B(1)$ term will be no more than $\delta/2$ for sufficiently
large $x$, so
$$
C - \delta
\le
\lim\inf \frac{|T|}{x \log\log x / \log x}
\le
\lim\sup \frac{|T|}{x \log\log x / \log x}
\le
C + \delta \enspace .
$$
Since $\delta$ is arbitrary, we conclude that
the limit as $x \rightarrow \infty$ exists and equals $C$.
\end{proof}

% thm on two factors and two strong liars
\begin{theorem} \label{thm:1mod4}
The number of odd $n = pq \le x$ with $|S(n)| = 2$ is
\begin{equation}\label{maineqn41}
(1 + o(1)) \frac{3 C \cdot x \log\log x} {4 \log x} \enspace .
% \eqno{(1)}
\end{equation}
\end{theorem}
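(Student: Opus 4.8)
The plan is to reduce to Theorem~\ref{thm:condition2} and subtract a correction term. By Proposition~\ref{prop:n/p}, an odd $n = pq$ satisfies $|S(n)| = 2$ precisely when $\gcd(p',q') = 1$ and $n$ is divisible by a prime $\equiv 3 \bmod{4}$; since $p$ and $q$ are odd primes, this last condition fails only when $p \equiv q \equiv 1 \bmod{4}$. Hence the count we want is $|T| - |T^{11}|$, where $T = \{pq \le x : \gcd(p',q')=1\}$ is the set of Theorem~\ref{thm:condition2} and
$$
T^{11} = \{\, pq \le x : p \equiv q \equiv 1 \bmod{4} \text{ and } \gcd(p',q') = 1 \,\}.
$$
(Terms with $p = q$ contribute $O(\sqrt x)$ and are absorbed into the error, exactly as in the earlier results.) So it suffices to prove $|T^{11}| = (1+o(1))\,\frac{C\,x\log\log x}{4\log x}$; subtracting this from Theorem~\ref{thm:condition2} gives $(1+o(1))\frac{3C\,x\log\log x}{4\log x}$, which is the claimed formula.

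To estimate $|T^{11}|$ I would rerun the simple asymptotic sieve of Theorem~\ref{thm:condition2} inside the subfamily of $pq \le x$ with $p \equiv q \equiv 1 \bmod{4}$. The key remark is that for \emph{odd} $d$ one has $\lcm(4,d) = 4d$, so by the Chinese Remainder Theorem the set $S_{4d} = \{pq \le x : p \equiv q \equiv 1 \bmod{4d}\}$ is exactly the set of $pq \le x$ with $p \equiv q \equiv 1 \bmod{4}$ and $d \mid \gcd(p',q')$ (the gcd of the odd parts involves only odd primes). Thus $T^{11}$ is obtained from $S_4$ by removing those $pq$ for which some odd prime divides $\gcd(p',q')$, and inclusion--exclusion over $B$-smooth odd $d$ gives, just as in the proof of Theorem~\ref{thm:condition2},
$$
\sum_{\substack{\mathrm{odd}\ d \ge 1 \\ B\mathrm{-smooth}}} \mu(d)\,|S_{4d}| - |U^{(B)}| \le |T^{11}| \le \sum_{\substack{\mathrm{odd}\ d \ge 1 \\ B\mathrm{-smooth}}} \mu(d)\,|S_{4d}|,
$$
where $U^{(B)}$ is the set of $pq \le x$ with $p \equiv q \equiv 1 \bmod{4}$ for which $\gcd(p',q') > 1$ but has no prime factor below $B$.

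Since $U^{(B)} \subseteq S^{(B)}$, Lemma~\ref{SBbound} bounds it by $O\!\left(x\log\log x/(\sqrt B \log x)\right)$ with an absolute constant. For the main sum, $B$ is fixed, so Lemma~\ref{Sdasympt} applies to each $|S_{4d}|$; using $\varphi(4d) = 2\varphi(d)$ for odd $d$, it equals
$$
(1 + o_B(1))\,\frac{x\log\log x}{4\log x}\sum_{\substack{\mathrm{odd}\ d \ge 1 \\ B\mathrm{-smooth}}}\frac{\mu(d)}{\varphi(d)^2} = \left(\tfrac{1}{4} C_B + o_B(1)\right)\frac{x\log\log x}{\log x},
$$
where $C_B = \prod_{3 \le p < B}(1 - 1/(p-1)^2)$ as in Theorem~\ref{thm:condition2}. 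Combining the last three displays, $|T^{11}|/(x\log\log x/\log x) = \tfrac{1}{4} C_B + o_B(1) + O(B^{-1/2})$; choosing $B$ large and then letting $x \to \infty$, by the same two-parameter limit argument used at the end of Theorem~\ref{thm:condition2}, gives $|T^{11}| = (1+o(1))\,\frac{C\,x\log\log x}{4\log x}$, and the theorem follows.

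There is little genuine difficulty here, since the analytic ingredients (Lemmas~\ref{Sdasympt} and~\ref{SBbound}, and the inclusion--exclusion framework) are already in hand. The one step needing care is the even modulus $4d$: one must verify that $S_{4d}$ for odd $d$ really does encode the two conditions $p \equiv q \equiv 1 \bmod{4}$ and $d \mid \gcd(p',q')$ simultaneously (this is exactly where the restriction to odd $d$ in the inclusion--exclusion is used), and compute $\varphi(4d) = 2\varphi(d)$ correctly, so that the factor $\tfrac{1}{4}$, and after subtraction $\tfrac{3}{4}$, emerges cleanly.
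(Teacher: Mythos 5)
Your proposal is correct and follows essentially the same route as the paper: subtract the $p \equiv q \equiv 1 \bmod 4$ case (the paper's $T_{4,1}$, your $T^{11}$) from Theorem \ref{thm:condition2}, run the same inclusion--exclusion over odd $B$-smooth $d$ with $|S_{4d}|$ estimated by Lemma \ref{Sdasympt} (giving the factor $\tfrac14$ via $\varphi(4d)=2\varphi(d)$), and control the tail with $S^{(B)}$ exactly as the paper does. No substantive differences.
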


\begin{proof}
By Proposition \ref{prop:n/p} we need to count odd $n \leq x$ with 
$n = pq$, $\gcd(p',q')=1$, and at least one of $p,q$ congruent to $3$ modulo $4$.
Since we have the count of $n \leq x$ with two prime factors and $\gcd(p',q')=1$, 
it suffices to subtract those where $p \equiv q \equiv 1 \mod{4}$.

This is very similar to the proof of Theorem \ref{thm:condition2},
so we only note the differences.  First, the ``universe'' $S$ is
no longer $S_1$ but $S_4$.  Second, we define
$T_{4,1}$ and $T'_{4,1}$ similarly to $T$ 
and $T'$, but with the additional requirement that 
$p \equiv q\equiv 1$ mod 4.  Then, as before, a combinatorial
argument gives
$$
\sum_{\substack{\rm odd \ d \ge 1 \\  B-\rm smooth}} \mu(d) |S_{4d}|
- |S^{(B)}|
\le |T_{4,1}| \le
\sum_{\substack{\rm odd \ d \ge 1 \\  B-\rm smooth}} \mu(d) |S_{4d}| \enspace ,
$$
where we re-use $S^{(B)}$ since removing $b$'s that are 3 mod 4
only makes the lower bound larger.
By Lemma \ref{Sdasympt} we have
$$
\sum_{\substack{\rm odd \ d \ge 1 \\  B-\rm smooth}} \mu(d) |S_{4d}|
=
\left(
\frac 1 4 \prod_{3 \le p < B} \left(1 - \frac {1}{(p-1)^2}\right)
+ o_B(1) \right) \frac{x \log\log x}{\log x} \enspace .
$$
The rest of the proof proceeds just as for Theorem \ref{thm:condition2}.
\end{proof}

In the table below, count 1 is the number of $n \leq x$ with $n=pq$
and $\gcd(p',q')=1$.  Count 2 adds the condition that $p \equiv q \equiv 1 \mod{4}$.
As can be seen from the first two
columns, Theorem \ref{thm:condition2} is reasonably accurate,
despite the slowly growing $\log\log{x}$ factor.  In the last two columns the asymptotic
expression can be seen to be a bit of an overestimate.  
We believe this reflects ``Chebyshev's bias,'' whereby 
the residue class 1 mod 4 gets, among small primes, noticeably
less than its fair share.
\bigskip

\begin{tabular}{|r| r | r | r | r |}
$x$ & count 1 & prediction & count 2 & prediction \\ \hline
$10^3$     & 166     & 184.70      &  28     &  46.17      \\
$5 \times 10^3$      & 795     & 830.16      &  149    &  207.54     \\
$10^4$    & 1544    & 1591.44     &  298    &  397.86     \\
$5 \times 10^4$    & 7246    & 7264.91     &  1473   &  1816.23    \\
$10^5$  & 14027   & 14011.09    &  2872   &  3502.77    \\
$5 \times 10^5$   & 65442   & 64754.58    &  13681  &  16188.65   \\
$10^6$  & 127207  & 125471.12   &  26792  &  31367.78   \\
$5 \times 10^6$  & 595382  & 585478.01   &  126898 &  146369.50  \\
$10^7$ & 1159409 & 1138603.46  &  248242 &  284650.87  \\
$5 \times 10^7$ & 5459378 & 5353378.05 & 1178844 & 1338344.51 \\
  $10^8$ & 10653388 & 10441331.16 & 2307619 & 2610332.79 \\
 $ 5 \times 10^8$ & 50424160 & 49392155.46 & 10991685 & 12348038.86 \\
  $10^9$ & 98596968 & 96563937.17 & 21542038 & 24140984.29 \\
\end{tabular}

% algorithm section
\section{Tabulation algorithm} \label{sec:algorithms}

Switching gears, in this section we design and analyze an algorithm that tabulates 
all $n \leq x$ with exactly two strong liars, thus giving an exact count.  This appears 
to be unstudied, so we start with naive ideas and improve upon them.

First, we address the costs of basic operations.  We can add two numbers with $k$ 
bits using $O(k)$ bit operations, and we use $M(k)$ to denote the cost of multiplying two 
$k$ bit numbers.  A classic fast multiplication algorithm is that of Sch\"{o}nhage and Strassen
 with $M(k) = O(k \log{k} \log\log{k})$, but it has been recently superseded by \cite{Fur09}.
A good discussion along with 
a table comparing different multiplication algorithms may be found 
in \cite[Section 8.3]{GathGer03}.  Finally,  for integers of $k$ bits the best gcd algorithm
 takes $O(M(k)\log{k})$ bit operations \cite{SteZim04}.

Turning to tabulation algorithms, 
a truly naive method would be to consider each $n$ in turn by factoring and then 
applying Monier's formula.  Since factoring is expensive for an individual $n$ 
but has a cheap amortized cost when factoring a range of $n$, we instead generate 
all factorizations first before applying Proposition \ref{prop:monier}.

To factor all positive integers $n \leq x$, we will generate an array where the largest 
prime factor of $n$ is stored at index $n$.  To do so, initialize the array with all zeros.
Starting with $p=2$, let $p$ be the next largest index whose value is $0$.  Then
take all indices that are a multiple of $p$ and overwrite the value with $p$.  Do this for 
all $p \leq \sqrt{x}$.  Each operation is an addition, and the total number of operations is 
$$
\sum_{p \leq \sqrt{x}} \frac{x}{p} = O(x \log\log{x}) \enspace ,
$$
making the total complexity $O(x \log{x})$ space and $O(x (\log{x}) (\log\log{x}))$ bit operations.
One can retrieve the factorization of $n$ at an amortized cost of $M(\log{n}) \log\log{n}$
by dividing $n$ by $p$ and then recursively looking up the largest prime factor of $n/p$
in the table.  The average of $\log\log{n}$ for the number of prime factors of $n$
comes from Lemma \ref{thm:average}.

This then gives Algorithm 0:  factor all integers $n \leq x$, then apply Monier's formula 
to each $n$.  Generating the array with the largest prime factor of each $n \leq x$ costs 
$O(x (\log{x}) (\log\log{x}))$ bit operations and uses $O(x \log{x})$ space.
We then have  a $\gcd$ check
 for each distinct prime divisor of $n$, 
which by Lemma \ref{thm:average} is a total of $O(x \log\log{x})$ $\gcd$'s at a total cost 
of $O(x M(\log{x}) (\log\log{x})^2)$ bit operations.  The total cost of the factorization 
retrievals is $O(x M(\log{x}) (\log\log{x}))$ by the same theorem, and doesn't affect 
the asymptotic running time.

Our first improvement will be to reduce the number of $\gcd$ checks.
For all $p \mid n$, form $\ell = \prod_{p \mid n} p'$.  Then checking Monier's formula
only requires a single $\gcd$ application.  This improvement is implemented 
in Algorithm \ref{alg:naive1}.

% algorithm 1
\begin{algorithm} \label{alg:naive1}
\caption{Naive tabulation}
Factor all $n \leq x$ using a sieve \;
\tcc{For each $n$ build $p'$ product and gcd with $n'$}
\For{ $n \leq x$}{
  $\ell = 1$ \;
  \For{ $p \mid n$}{
    $\ell \leftarrow \ell \cdot p'$ \;
    }
    \If{ $\gcd(\ell, n') \neq 1$}{ set $0$}
    \If{ $n$ odd and all $p \equiv 1 \bmod{4} $}{set $0$}{set $1$}
  }
\end{algorithm}

Note that the tabulation includes even $n$ with $\prod_{p \mid n} \gcd(p', n') = 1$, but 
it is trivial to isolate the odd survivors if required. 

% naive running time
\begin{theorem}\label{alg1_analysis}
Algorithm \ref{alg:naive1} stores at most $O(x \log\log{x})$ integers $\leq x$ and runs using
$O(x \cdot M(\log{x}) \log\log{x})$ bit operations.
\end{theorem}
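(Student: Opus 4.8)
The plan is to account separately for the four phases of Algorithm~\ref{alg:naive1} --- sieving, factorization retrieval, forming the product $\ell$, and the single gcd test --- and to show each contributes $O(x\,M(\log x)\log\log x)$ bit operations, with the mod-$4$ test negligible. The two facts doing the real work are Lemma~\ref{thm:average} (to control $\sum_{n\le x}\omega(n)$ and $\sum_{n\le x}\Omega(n)$) and the elementary observation that every integer manipulated in the main loop has $O(\log x)$ bits; the rest is bookkeeping.

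For the sieving phase I would quote the analysis already given in the text: producing the array that records the largest prime factor of each $n\le x$ costs $O(x(\log x)\log\log x)$ bit operations and $O(x\log x)$ bits of space, and since $M(k)=\Omega(k)$ this is absorbed into the claimed bound. For storage, the sieve array holds $x$ integers, each a prime $\le\sqrt x\le x$; should one also materialize the full factorization of every $n$, the number of prime entries written is $\sum_{n\le x}\Omega(n)=O(x\log\log x)$ by Lemma~\ref{thm:average}, so in all the algorithm keeps $O(x\log\log x)$ integers, each $\le x$.

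For the main loop, fix $n$ and retrieve its factorization by repeatedly dividing out the stored largest prime factor: this uses $\Omega(n)$ divisions of integers $\le x$, each costing $O(M(\log x))$, and summing over $n$ via Lemma~\ref{thm:average} gives $O(x\,M(\log x)\log\log x)$. Forming $\ell=\prod_{p\mid n}p'$ uses $\omega(n)$ multiplications, and here I would point out that $\ell\le\prod_{p\mid n}(p-1)<\prod_{p\mid n}p\le n\le x$, so every partial product --- as well as the extraction of each $p'$ and of $n'$ by a bit shift --- involves only $O(\log x)$-bit numbers and costs $O(M(\log x))$; Lemma~\ref{thm:average} again makes the total $O(x\,M(\log x)\log\log x)$. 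The step $\gcd(\ell,n')$ is performed once per $n$ on $O(\log x)$-bit inputs, costing $O(M(\log x)\log(\log x))=O(M(\log x)\log\log x)$ by \cite{SteZim04}, for a total of $O(x\,M(\log x)\log\log x)$; testing whether every $p\mid n$ is $1\bmod 4$ costs $O(\omega(n))$ cheap reductions and contributes only $O(x\log\log x)$. Adding the phases yields the stated running time.

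I do not expect a genuine obstacle; the content is entirely a careful amortized count. The one point worth stating explicitly --- and the only place an unwary analysis could go wrong --- is that $\ell$ never exceeds $x$, so there is no hidden growth in the bit-lengths of the operands. After that, every per-$n$ cost is a constant number of $\log x$-bit arithmetic operations (each at most $O(M(\log x)\log\log x)$) times $\omega(n)$ or $\Omega(n)$ or $1$, and the sums of these multiplicities over $n\le x$ are precisely what Lemma~\ref{thm:average} supplies.
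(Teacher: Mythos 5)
Your proposal is correct and follows essentially the same route as the paper's proof: charge the sieve/factorization-retrieval phase, the $\omega(n)$ multiplications forming $\ell$, and the single gcd per $n$ separately, and control the multiplicities with Lemma~\ref{thm:average}. The only addition is your explicit remark that $\ell\le n\le x$ keeps all operands at $O(\log x)$ bits, a point the paper leaves implicit but which is a worthwhile sanity check.
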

\begin{proof}
As discussed the factoring step costs 
$O(x (\log{x})( \log\log{x}))$ time and $O(x \log{x})$ space to generate the array and 
 $O(x M(\log{x}) (\log\log{x}))$ bit operations to generate all the factorizations 
 over the course of the algorithm.

Algorithm \ref{alg:naive1} then does a multiplication for every distinct prime 
divisor of every $n \leq x$, a total of $O(x \log\log{x})$ multiplications 
by Lemma \ref{thm:average}.  The algorithm also does  $x$ gcd 
computations at a cost of  $O(x M(\log{x}) (\log\log{x}))$ bit operations, 
and the multiplications have the same total cost.
\end{proof}

In developing a better tabulation algorithm we seek to use more of a sieve strategy.
Note that for a given prime $p$ with $p-1$ divisible by an odd prime $r$, we can 
cross off any $n = p \cdot d$ where $d \equiv 1 \bmod{r}$.  For in this case 
$\gcd(p', (n/p)') \neq 1$ and thus $|S(n)| > 2$ by Proposition \ref{prop:n/p}.
Such $n$ are exactly those in the arithmetic progression 
$$
\{ n = p + kpr \ : \ k \in \N\} \enspace .
$$

In Algorithm \ref{alg:sieve}, checking all prime factors for one that is congruent to 
$3$ modulo $4$ would be too expensive.  Thus we add another sieving step, and 
introduce three states for each integer.  Integers start out labeled ``2."  If they fail 
to have $\prod_{p \mid n} \gcd(p',n')=1$ they get labeled ``0".  Finally, those 
divisible by a prime congruent to $3$ modulo $4$ are labeled ``1" and counted.

% sieving algorithm, labelled alg 2
\begin{algorithm} \label{alg:sieve}
\caption{Sieving tabulation}
Generate an array with the largest prime factor of $n$ for all $n \leq x$\;
Initialize new array with $2$ in each odd entry, $0$ in each even \;
\For{ primes $p \leq x$ }{
   generate factorization of $p-1$ \;
  \For{ odd prime $r \mid (p-1)$}{
        \For{ $n \equiv p \bmod{pr}$}{ set $0$ \;}
    }
  }
\For{ primes $p \leq x$ with $p \equiv 3 \mod{4}$}{
 \For{ multiples of $p$ with value $2$}{
   set $1$ \;
 }
}
count $n$ with value $1$ \;
\end{algorithm}

% analysis of algorithm 2
\begin{theorem} \label{alg2_analysis}
Algorithm \ref{alg:sieve} stores at most $O(x)$ integers $\leq x$ and 
runs using $O(x (\log{x})( \log\log{x}))$ bit operations.
\end{theorem}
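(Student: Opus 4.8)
The plan is to bound the storage and the bit-operation count separately, walking through the algorithm phase by phase and invoking the cost estimates already recorded for the factoring sieve, for factorization retrieval from its table, and for the relevant prime reciprocal sums.

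For the space bound: the algorithm keeps two arrays indexed by $n \le x$ --- one storing the largest prime factor of each $n$ (the largest-prime-factor table built earlier, occupying $O(x\log x)$ bits) and one storing a label in $\{0,1,2\}$ --- together with only $O(\log x)$ scratch quantities (the current prime $p$, the list of prime factors of $p-1$, and a few loop counters), each an integer $\le x$. That is $O(x)$ integers $\le x$, proving the first assertion.

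For the running time, I would split into phases. Constructing the largest-prime-factor table costs $O(x(\log x)(\log\log x))$ bit operations, as shown earlier, and initializing the label array is a further $O(x\log x)$. Over the whole run the factorization of $p-1$ is extracted once per prime $p\le x$; by the third part of Lemma \ref{thm:average} the number of prime-factor extractions is $\sum_{p\le x}\Omega(p-1)=O\!\left(x(\log\log x)/\log x\right)$, and at $M(\log x)$ bit operations each this totals at most $O(x(\log x)(\log\log x))$. In the phase handling primes $p\equiv 3\bmod4$ we traverse, for each such $p$, the $\lfloor x/p\rfloor$ multiples of $p$; by Lemma \ref{landau} we have $\sum_{p\le x}x/p=O(x\log\log x)$, each step being an add-$p$ advance together with an array access at $O(\log x)$ bit operations, so this phase --- with the $O(x\log x)$ cost of scanning $n\le x$ to recognize primes from the table folded in --- is $O(x(\log x)(\log\log x))$. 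The final counting scan is $O(x)$ reads, i.e. $O(x\log x)$ bit operations.

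The crux is the remaining phase, in which for each prime $p\le x$ and each odd prime $r\mid p-1$ we walk the progression $n\equiv p\pmod{pr}$, $n\le x$, overwriting labels with $0$; each write is an add-$pr$ advance plus an array access costing $O(\log x)$ bit operations, so it suffices to show that the number of writes, $\sum_{p\le x}\sum_{\substack{r\mid p-1\\ r\ \mathrm{odd\ prime}}}\lfloor x/(pr)\rfloor$, is $O(x\log\log x)$, i.e. that $\sum_{p\le x}\frac1p\sum_{r\mid p-1}\frac1r=O(\log\log x)$. I would prove this by reversing the order of summation: the progression is empty unless $pr\le x$, and since $r\mid p-1$ forces $p\ge r+1$, this forces $r<\sqrt x$ --- precisely the range in which the Brun--Titchmarsh-based bound of Lemma \ref{reciprocalsums}(3) is available. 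Hence
$$
\sum_{p\le x}\frac1p\sum_{r\mid p-1}\frac1r\ \le\ \sum_{r<\sqrt x}\frac1r\sum_{\substack{p\le x\\ p\equiv1\,(r)}}\frac1p\ \le\ \sum_{r<\sqrt x}\frac1r\cdot\frac{2(\log\log x+O(1))}{\varphi(r)}\ =\ O(\log\log x)\sum_{r}\frac1{r(r-1)}\ =\ O(\log\log x),
$$
the last series converging. Summing the phases yields $O(x(\log x)(\log\log x))$ bit operations overall. The one genuinely non-routine point is this reordered double sum: one has to notice the restriction $r<\sqrt x$ so that the uniform reciprocal-sum estimate of Lemma \ref{reciprocalsums} legitimately applies, after which the sum over $r$ collapses by convergence of $\sum 1/(r(r-1))$; everything else is bookkeeping against results already established.
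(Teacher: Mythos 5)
Your proposal is correct and follows essentially the same route as the paper: the dominant cost is bounded by reversing the double sum $\sum_{p\le x}\sum_{r\mid p-1} x/(pr)$, observing that $pr\le x$ forces $r\le\sqrt{x}$, and applying the uniform bound of Lemma \ref{reciprocalsums}(3) so that the sum over $r$ collapses by convergence of $\sum 1/(r\varphi(r))$, with the factoring sieve and $\sum_{p\le x}\Omega(p-1)$ estimates handling the remaining phases exactly as in the paper. Your treatment of the space bound and the $p\equiv 3\bmod 4$ sieving phase is a slightly more explicit version of what the paper asserts, but not a different argument.
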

\begin{proof}
Generating the array of largest prime factors takes $O(x (\log{x})(\log\log{x}))$ bit operations 
and $O(x \log{x})$ space, as does the final sieving at line (8).  The main difficulty of 
the algorithm is the loop at line (3).

With the array of largest prime factors in hand, identifying primes is easy.  Then the main 
loop has two components.  The first is generating the factorization of $p-1$ for all primes 
up to $x$.  Since $\sum_{p \leq x} \Omega(p-1) = O(x \log\log{x}/\log{x})$
by Lemma \ref{thm:average}, the total cost in bit operations is 
$$
O\left( \frac{x M(\log{x}) \log\log{x}}{\log{x}} \right) \enspace .
$$
Even using a naive multiplication algorithm with $M(\log{x}) = O(\log{x})^2$, this is no 
worse than $O(x (\log{x}) (\log\log{x}))$.

The second component of the main loop involves checking each element 
of the sequence $n = p + kpr$, where $p$ runs over primes up to $x$ and $r$ 
runs over the distinct prime divisors of $p-1$.  Generating such a sequence 
requires $x/(pr)$ additions, making the total number of additions
$$
\sum_{p \leq x} \sum_{r \mid p-1} \frac{x}{pr} \enspace .
$$
To evaluate this sum, we reverse the order of summation.
This same sum appeared in the proof of Theorem \ref{thm:twostrongliars}; 
note that once again $r \mid p-1$ and $pr \leq x$ implies $r \leq \sqrt{x}$.
We have
$$
x \sum_{r \leq x} \frac{1}{r} \sum_{p \equiv 1 \bmod{r}} \frac{1}{p} 
\leq O\left(x \sum_{r \leq \sqrt{x}} \frac{1}{r} \frac{\log\log{x}}{\varphi(r)} \right)
= O\left( x \log\log{x} \sum_{r \leq \sqrt{x}} \frac{1}{r^2} \right) \enspace .
$$
The sum over primes has a 
constant upper bound.
Thus there are $O(x \log\log{x})$ additions at a cost of $O( x (\log{x})(\log\log{x}))$ 
bit operations.
\end{proof}

Algorithm \ref{alg:sieve} was implemented, giving the following counts of composite 
$n \leq x$ with exactly two strong liars.
\bigskip

\begin{tabular}{|c| r | c |}
$x$ & $n \leq x$ with $|S(n)| = 2$ & $\rm{count} \cdot \frac{\log\log\log{x}}{x}$ \\ \hline
$10^3$ &    243                    & 0.1601\\
$10^4$ &    2553                  & 0.2036 \\
$10^5$ &    25955                &  0.2319 \\
$10^6$ &    261280               &   0.2522 \\
$10^7$ &    2616237            &     0.2675    \\        
$10^8$ &    26140023          &      0.2795  \\       
$10^9$ &    260899381          &     0.2893 
\end{tabular}
\bigskip

With $e^{-\gamma} \approx 0.5615$, we see that convergence to the asymptotic 
formula is quite slow.  As Daniel Shanks
once wrote, $\log\log\log x$ does go to infinity, but ``with great
dignity."

% previous work section
\section{Conclusions and future work}

It is interesting that counts of $n \leq x$ with $\gcd(n, \varphi(n)) = 1$, with 
$|F(n)| = 1$, and with $|S(n)| = 2$ all have the same asymptotic formula, and it 
suggests that there might be some general class of arithmetic sets whose size 
can be approximated by the set of $n$ with no prime factor smaller than $\log\log{x}$.
The set of $n$ satisfying the best case for the Lucas pseudoprime test
would be well worth studying next.
It is worth remarking that the three sets
$\{n \leq x \ : \ \gcd(n, \varphi(n))=1\}, \{n \leq x \ : \ |F(n)|=1\}, \{n \leq x \ : \ |S(n)|=2\}$
are not the same.  For $9$ has two strong liars, 
but $\gcd(9, \varphi(9)) \neq 1$ and $9$ has two Fermat liars rather than one.  Also, 
$15$ satisfies $\gcd(15, \varphi(15)) = 1$, but $15$ has more than one Fermat liar.

The authors of \cite{ErdPom86} give a number of other results regarding the size of 
$|F(n)|$, and it would be worth extending those results to $|S(n)|$ and $|E(n)|$.
Our Theorem \ref{thm:1mod4} is in a different vein, and it would be nice to extend 
it to counts of $n$ with two strong liars and $k$ prime factors for $k > 2$.

The slow rate of convergence of exact counts of $n$ with two strong liars to the 
asymptotic formula cries out for a more precise formula with a faster rate of 
convergence.  It seems that a large part of the error comes from the fact that 
the Mertens bound is not very accurate when one only sieves by small primes.
As for why the count of $n$ with two strong liars is approximated by sieving up 
to $\log\log{x}$, consider the following heuristic argument.  A typical $n$ will 
have $\log\log{n}$ prime factors $p$.  For a given prime $r$ of size roughly $\log\log{n}$, 
the expected number of $p$ with $r \mid p-1$ is one.  Since we need $n-1$ to not 
be divisible by $r$, we exclude all the $n\leq x$ with $n-1$ divisible by a prime less 
than $\log\log{x}$.

Though asymptotically the number of $n$ with $|S(n)| = 2$ is density $0$, 
for quite some time the proportion is more than a quarter of all integers.
It would be worth knowing at what point the proportion is less than an arbitrary 
constant $0 < c < 1$, as well as the proportion of $n$ with $|S(n)| = \ell$ 
for values of $\ell$ greater than two.  

Our algorithm counts odd $n \leq x$ with two strong liars by tabulating them.
If Theorem \ref{thm:twostrongliars} could be improved by finding an explicit error bound, 
one could find an approximate count much faster through the use of that formula.

% bibliography
\bibliography{miller_rabin}
\bibliographystyle{amsplain}

\end{document}